%
%
%
%





\documentclass[12pt]{article}
\usepackage{subcaption}
\usepackage[font=scriptsize]{caption}

\topmargin-.5in
\textheight9.2in
\oddsidemargin0in
\textwidth6.5in

\usepackage{latexsym,amsmath,amsthm,amssymb,amsfonts,amscd,multirow}
\usepackage{epsfig,verbatim,epstopdf,graphics}
\usepackage{color}
\usepackage{array}
\usepackage{hyperref}
\usepackage[ruled,linesnumbered]{algorithm2e}
\usepackage{mathtools} 
\usepackage{lscape}

\newif\ifshowproof
 \showprooftrue

\newtheorem{Example}{Example}[section]
\newtheorem{Thm}{Theorem}[section]

\newtheorem{Lem}[Thm]{Lemma}
\newtheorem{rem}[Thm]{Remark}
\newtheorem{definition}{Definition}[section]

\newcommand{\ot}{\frac{1}{2}}

\newcommand{\ohj}{\frac{1}{h_j}}

\newcommand{\intj}{\int_{I_j}}

\newcommand{\uhat}{\hat u_h}

\newcommand{\vh}{v_h}
\newcommand{\uxt}{\widetilde{(u_h)_x}}
\newcommand{\jp}{{j+\frac{1}{2}}}
\newcommand{\jm}{{j-\frac{1}{2}}}
\newcommand{\uxa}{\{ (u_h)_x \}}
\newcommand{\ua}{\{ u_h \}}

\newcommand{\sumj}{\sum_{j=1}^N}
\newcommand{\suml}{\sum_{l=0}^{N-1}}

\newcommand{\ao}{\alpha_1}

\newcommand{\bo}{\beta_1}
\newcommand{\bt}{\beta_2}

\newcommand{\lo}{\lambda_1}
\newcommand{\lt}{\lambda_2}

\newcommand{\sumn}{\sum_{n=-\infty}^{\infty}}

\newcommand{\zn}{\mathbb{Z}_N}

\newcommand{\sch}{Schr\"odinger }
\newcommand{\beq}{\begin{equation}}
\newcommand{\eeq}{\end{equation}}
\newcommand{\beqn}{\begin{equation*}}
\newcommand{\eeqn}{\end{equation*}}
\newcommand{\beqa}{\begin{eqnarray}}
\newcommand{\eeqa}{\end{eqnarray}}
\newcommand{\bmat}{\begin{bmatrix}}
\newcommand{\emat}{\end{bmatrix}}

\newcommand{\pst}{P^\star_h}
\newcommand{\plt}{P_h^0}
\newcommand{\pdag}{P^\dagger_h}

\newcommand{\abs}[1]{\left | #1 \right |}
\newcommand{\la}{\Lambda}
\newcommand{\g}{\Gamma}
\newcommand{\wnorm}[3]{\|#1\|_{W^{#2,#3}(\mathcal I_N)}}
\newcommand{\wnormj}[3]{\|#1\|_{W^{#2,#3}( I_j)}}
\newcommand{\hnorm}[2]{\|#1\|_{H^{#2}(\mathcal I_N)}}
\newcommand{\hnormj}[2]{\|#1\|_{H^{#2}( I_j)}}
\newcommand{\bdnorm}[1]{\|#1\|_{ L^2 (\partial \mathcal I_N)}}
\newcommand{\bdnormj}[1]{\|#1\|_{L^2 (  \partial I_j)}}

\newcommand{\bs}{\boldsymbol}

\newcommand{\sumla}{\boxtimes}
\newcommand{\sumnj}{\boxplus}

\newcommand{\cb}{\color{blue}}
\DeclarePairedDelimiter{\floor}{\lfloor}{\rfloor}

\newcolumntype{H}{>{\setbox0=\hbox\bgroup}c<{\egroup}@{}}
\title
{Superconvergence of   ultra-weak discontinuous Galerkin methods for the linear \sch equation  in one dimension}

\author{Anqi Chen
\thanks{Department of Mathematics, Michigan State University,
East Lansing, MI 48824 U.S.A.
{\tt chenanq3@msu.edu}.}%
\and
 Yingda Cheng
\thanks{Department of Mathematics, Department of  Computational Mathematics, Science and Engineering, Michigan State University,
               East Lansing, MI 48824, USA.
E-mail: ycheng@msu.edu. Research is supported by NSF grants  DMS-1453661 and DMS-1720023.}
\and
Yong Liu
\thanks{School of Mathematical Sciences, University of Science and Technology of China, Hefei, Anhui 230026 People's Republic of China. {\tt yong123@mail.ustc.edu.cn.}
}
\and
Mengping Zhang
\thanks{School of Mathematical Sciences, University of Science and Technology of China, Hefei, Anhui, 230026 People's Republic of China. {\tt mpzhang@ustc.edu.cn.} Research supported by NSFC grant 11871448.}
}

\date{\today}

\begin{document}
\maketitle

\begin{abstract}
We analyze the superconvergence properties of   ultra-weak discontinuous Galerkin (UWDG) methods with various choices of flux parameters for one-dimensional linear \sch equation.  In our previous work \cite{2018arXiv180105875C}, stability and optimal convergence rate are established for a large class of   flux parameters.
Depending on the flux choices and if the polynomial degree $k$ is even or odd, in this paper, we prove $2k$ or $(2k-1)$-th order superconvergence rate for cell averages and numerical flux of the function, as well as $(2k-1)$ or $(2k-2)$-th order for numerical flux of the derivative. In addition, we prove superconvergence of $(k+2)$ or $(k+3)$-th order of the DG solution towards a special projection.  At a class of special points, the function values and the first and second order derivatives of the DG solution are superconvergent with order  $k+2, k+1, k$, respectively.  The proof relies on the correction function techniques initiated in \cite{cao2014superconvergence}, and applied to \cite{cao2017superconvergenceddg} for direct DG (DDG) methods for diffusion problems. 
Compared with \cite{cao2017superconvergenceddg}, \sch equation poses unique challenges for superconvergence proof because of the lack of the dissipation mechanism from the equation. One major highlight of our proof is that   we introduce specially chosen  test functions in the error equation and show the superconvergence of the second derivative and jump across the cell interfaces of the difference between numerical solution and projected exact solution. This technique was originally proposed in  \cite{cheng2010superconvergence} and is essential to elevate the convergence order for our analysis.  Finally, by negative norm estimates, we   apply the post-processing technique and show that the accuracy of our scheme can be enhanced to order $2k.$ Theoretical results are verified by numerical experiments.

\end{abstract}

\textbf{Keywords.} Ultra-weak discontinuous Galerkin method,  superconvergence, post-processing, projection, one-dimensional \sch equation.

\section{Introduction}

Discontinuous Galerkin (DG) methods belong to a class of finite element methods using   discontinuous piecewise function space for test functions and numerical solution. The first DG method was introduced by Reed and Hill in \cite{reed1973triangular} for solving neutron transport problems. A major development of DG methods is the Runge-Kutta DG (RKDG) framework introduced for solving hyperbolic conservation laws in a series of papers, see \cite{cockburn2001runge} for a review. Because of the completely discontinuous basis, DG methods have several attractive properties. It can be used on many types of meshes, even those with hanging nodes. The methods can be designed with $h$-$p$ adaptivity and very high parallel efficiency. 

We are interested in solving the following linear \sch equations by DG methods. 
\beq
\label{eqn:ls}
	\begin{aligned}
	&iu_t + u_{xx} = 0,		&&	(x,t) \in I\times (0,T_e]	,\\
	&u(x,0) = u_0(x),		&&	 
	\end{aligned}
\eeq
where $I=[a,b]$ and periodic boundary condition. Various types of DG schemes for discretizing the second order spatial derivatives have been used to compute \eqref{eqn:ls}, including the local DG
(LDG) method   \cite{xu2005local, MR2888305, XingETDLDGsch} and the direct DG (DDG) methods \cite{luddgsch}.  This paper will focus on  the ultra-weak DG (UWDG) methods,  which can be traced backed to \cite{cessenat1998application}, and  refer to those DG methods \cite{shu2016discontinuous} that rely on repeatedly applying integration by parts  so all the spatial derivatives are shifted from the solution to the test function in the weak formulations \cite{MR2373176, XingKdvDG}. In our previous work \cite{2018arXiv180105875C}, UWDG methods were studied, and a systematic choice of flux parameters were made to guarantee stability or energy conservation property of the scheme.
Moreover, using projection techniques,   convergence results of the UWDG method for the one-dimensional nonlinear \sch equation were established. It was shown that a wide range of flux parameter choices can yield optimally convergent scheme.
In this work, we continue the research and investigate   superconvergence of the UWDG scheme.

The study of superconvergence  is of importance because \emph{a posteriori} error estimates can be derived guiding adaptive calculations.   For    superconvergence of DG methods, many results exist in the literature. We refer the readers to \cite{adjerid2006superconvergence,adjerid2002posteriori} for ordinary differential equation results.
In \cite{cheng2010superconvergence}, Cheng and Shu proved that the DG and LDG solutions are $(k+3/2)$-th order superconvergent towards projections of exact solutions of hyperbolic conservation laws and convection-diffusion equations using specially designed test functions when piecewise polynomials of degree $k$ are used. For linear hyperbolic problems, in \cite{yang2012analysis}, Yang and Shu proved that, under suitable initial discretization, the DG solutions of linear hyperbolic systems are convergent with optimal $(k+2)$-th order at Radau points. More recently, in \cite{cao2014superconvergence}, Cao \emph{et al} proved the $(2k+1)$-th superconvergence rate for cell average and DG numerical fluxes by introducing a locally defined correction function. The correction function also helps simplify the proof for point wise $(k+2)$-th superconvergence rate at Radau points and prove the derivative of DG solution has $(k+1)$-th superconvergence rate at so-called ``left Radau'' points. Then this technique has been extended to prove the superconvergence of DG solutions for linear and nonlinear hyperbolic PDEs in \cite{cao2018superconvergence,cao2017superconvergence}, DDG method for convection diffusion equations \cite{cao2017superconvergenceddg} and LDG method for linear \sch equations  \cite{zhou2017superconvergence}. Overall, for equations with higher order spatial derivatives, the same type of correction functions can be used for the LDG method which is based on a reformulation into a first order system of equations. For DDG method, new correction functions are needed treating the second order derivative directly \cite{cao2017superconvergenceddg}.

Another type of superconvergence of DG methods is achieved by postprocessing the solution by convolution with a kernel function, which is a linear combination of B-spline functions. For linear hyperbolic systems, \cite{cockburn2003enhanced} provided a framework for constructing such postprocessor and proving the superconvergence of the postprocessed DG solutions. Through the analysis of negative norm estimates and divided difference estimates, they showed that the postprocessed solution is superconvergent at a rate of $2k+1$. More recently, in \cite{Ji2013, Meng2017} the analysis are extended to scalar nonlinear hyperbolic equations. 

In this work,   we aim at the study of superconvergence of the UWDG methods for \eqref{eqn:ls} with scale invariant flux parameters. Such choice include  all commonly used fluxes, e.g. alternating, central, DDG and interior penalty DG (IPDG) fluxes. Depending on the flux choices and the evenness of oddness of the polynomial degree $k$,  we obtain $2k$ or $(2k-1)$-th order superconvergence rate for cell averages and numerical flux of the function, as well as $(2k-1)$ or $(2k-2)$-th order for numerical flux of derivative. 
The proof relies on the correction function techniques for second order derivatives applied to \cite{cao2017superconvergenceddg} for DDG methods for diffusion problems. This correction function  also enable us to prove the UWDG solution is superconvergent with a rate of $k+3$ to the special projection $\pst$ we introduced in \cite{2018arXiv180105875C} if $k \ge 3.$ We show that the function values and the first and second order derivatives of the DG solution are superconvergent with order  $k+2, k+1, k$, respectively, at interior points whose locations are determined by roots of certain polynomials associated with the flux parameters.
We want to emphasize that our approach is related but different from the superconvergence proof in \cite{cao2017superconvergenceddg} for diffusion equations, mainly because there is no dissipation mechanism in the \sch equation. Therefore, when $k$ is even, there is some additional terms in the error estimates that cannot be bounded. To overcome this difficulty,  we take specially chosen  test functions in the error equation and show the superconvergence of some intermediate quantities. This technique was originally proposed in  \cite{cheng2010superconvergence} and is essential to elevate the convergence order for our scheme. For the postprocessed UWDG solution, we introduce a dual problem and prove $(2k)$-th order negative norm estimate. The order is also one order less than that in hyperbolic equations, and again, due to the ultra-weak formulation which has boundary term of the product of derivatives and function values. With the negative norm estimates and divided difference estimates, we prove the $(2k)$-th order superconvergence rate for the postprocessed solution.  


  The rest of the paper is organized as follows. In Section \ref{sec:scheme}, we recall the UWDG scheme for linear \sch equations and some properties of the spatial discretization. In Section \ref{sec:proj}, we  define notations and projections. Section \ref{sec:suplinear} contains the main results of the paper,  superconvergence of the UWDG solution in various measures. In Section \ref{sec:numerical}, we provide numerical tests verifying theoretical results. Finally, we conclude in Section \ref{sec:conclusion}. Some technical proof is provided in the Appendix.

\section{Numerical Scheme}
\label{sec:scheme}

We first define notations of the mesh and finite element solution space. For interval $I=[a,b]$, the usual DG mesh $\mathcal I_N$ and the index set  $\mathbb{Z}_N = \{ 1, 2, \cdots, N \}$ is defined as:
\[
a=x_{\ot} < x_{\frac{3}{2}} < \cdots < x_{N+\ot} =b,
\quad I_j=(x_{\jm},x_{\jp}), \, x_j=\ot(x_{\jm}+x_{\jp}), \, j \in   \mathbb{Z}_N
\]
and
\[
h_j=x_{\jp}-x_{\jm}, \quad h=\max_{j \in   \mathbb{Z}_N} h_j,
\]
with mesh regularity requirement $\frac{h}{\min h_j} < \sigma$, $\sigma$ is fixed during mesh refinement.
The approximation space, which is a piecewise \emph{complex} polynomial space on $\mathcal I_N$ is defined as:
\[
V_h^k=\{v_h : v_h|_{I_j} \in P_c^k(I_j),\, I_j \in \mathcal I_N  \},
\]
where $P_c^k(I_j)$ is the space of \emph{complex} polynomials of degree up to $k$ on cell $I_j$. For a function
$v_h \in V_h^k$, we use $(v_h)^-_{\jm}$ and $(v_h)^+_{\jm}$ to denote the value of $v_h$ at $x_{\jm}$
from the left cell $I_{j-1}$ and the right cell $I_j$ respectively. The jump and average values are defined as $[v_h]=v_h^+-v_h^-$ and $\{v_h\}=\ot(v_h^++ v_h^-)$ at cell interfaces.
 
Throughout the paper, we use the standard Sobolev norm notations $\| \cdot \|_{W^{s,p}(I)}$ and broken Sobolev space on mesh $\mathcal I_N$. We denote   $\hnorm{v}{s}^2 = \sumj \hnormj{v}{s}^2$ and $\wnorm{v}{s}{\infty} = \max_j \wnormj{v}{s}{\infty}$. In Section \ref{sec:postp}, we consider negative norms and the definition is $\| v \|_{H^{-l} (I)} = \sup_{\Phi \in \mathcal{C}_0^\infty(I)} \frac{\int_I v(x) \Phi(x) dx}{\| \Phi\|_{{H^l} (I)}}.$   Additionally, we denote by $\bdnorm{v}$ the broken $L^2$ norm on cell interfaces, i.e., $\bdnorm{v}^2 = \sumj \bdnormj{v}^2$, where$\bdnormj{v}^2 = (v^-_{x_{j+\ot}})^2 + (v^+_{x_{j-\ot}})^2$.  We also denote $\|\cdot\|=\|\cdot\|_{L^2(I)}=\|\cdot\|_{L^2(\mathcal I_N)}$  to shorten the notation.  Lastly, we recall inverse inequalities
\beq
\label{eqn:inveq}
	\begin{aligned}
\| (v_h)_x \|_{L^2(\mathcal{I}_j)}  \leq  C h_j^{-1} \| v_h \|_{L^2(\mathcal{I}_j)}, \quad \| v_h \|_{L^2(\partial \mathcal I_j)} \leq C h^{-\ot} \| v_h \|_{L^2(\mathcal{I}_j)},  \\
\quad \| v_h \|_{L^\infty(\mathcal{I}_j)} \leq C h^{-\ot} \| v_h \|_{L^2(\mathcal{I}_j)},\ \forall v_h \in V_h^k,
\end{aligned}
\eeq
and trace inequalities
\beq
\label{eqn:traceeq}
\| v \|^2_{L^2(\partial I_j)} \leq C h_j^{-1} \| v \|^2_{L^2(I_j)},
\eeq
where, here and below $C$ is a constant independent of  the function $u$ and the mesh size $h$.

The semi-discrete UWDG scheme formulated in  \cite{2018arXiv180105875C} is defined as follows: we
solve for the unique function $u_h=u_h(t) \in V_h^k, \, k \ge 1, \, t\in (0,T_e]$, such that
\beq
\label{eqn:scheme}
\begin{aligned}
a_j(u_h,v_h)=0, \quad \forall j \in   \mathbb{Z}_N
\end{aligned}
\eeq
holds for all $\vh \in V_h^k,$  where 
\begin{equation*}
a_j(u_h,v_h) = \intj (u_h)_t \vh dx - iA_j(u_h,v_h),	\\
\end{equation*}
with $A_j(u_h,v_h) = \intj u_h (\vh)_{xx} dx - \uhat (\vh)^-_x |_\jp + \uhat (\vh)^+_x |_\jm  
+ \uxt \vh^- |_\jp -\uxt \vh^+ |_\jm$ as the UWDG spatial discretization for the second order derivative term.
The ``hat" and``tilde" terms are the numerical fluxes   for $u$ and $u_x$ at cell boundaries, which 
are single valued functions defined as: 
\beq
\label{eqn:flux}
	\begin{aligned}
	\uhat	&=\ua -\ao [u_h] +\bt[(u_h)_x],		& \ao,\bt \in \mathbb{R}.	 \\
        \uxt	& =\uxa+\ao [(u_h)_x] +\bo[u_h],	&  \bo \in \mathbb{R},	\\
	\end{aligned}
\eeq
where $\ao,   \bo, \bt   $ are prescribed parameters that may have $h$ dependence.
Note that we can rewrite the flux definition above in a matrix form
\beq
\label{eqn:fluxmat}
\bmat
\uhat		\\ 	\uxt
\emat
= 
G
\bmat
u_h^-		\\	(u_h)^-_x
\emat
+
H
\bmat
u_h^+		\\	(u_h)_x^+
\emat,
\quad 
 G
 = \bmat
\ot+\ao & -\bt \\
-\bo	& \ot - \ao
\emat
, \ 
H
=I_2-G=\bmat
\ot-\ao & \bt \\
\bo	& \ot + \ao
\emat
,
\eeq
where $I_2$ denotes the $2\times2$ identity matrix.
Some commonly used fluxes take the following choices of parameters.
\begin{itemize}
\item central flux, $\ao = \bo = \bt = 0;$
\item alternating flux, $\ao  = \pm \ot, \bo = \bt = 0;$
\item IPDG like flux, $\ao = \bt = 0,\bo = \tilde \bo h^{-1};$
\item DDG like flux, $\ao = \tilde \ao,  \bt = 0,\bo = \tilde \bo h^{-1};$
\item more generally, any scale invariant flux, $\ao =\tilde \ao, \bo = \tilde \bo h^{-1}, \bt = \tilde \bt h;$
\end{itemize}
where $\tilde \ao, \tilde \bo, \tilde \bt$ are prescribed constants independent of mesh size. For simplicity, in this paper we will only consider scale invariant flux choices. We now introduce 
\begin{align*}
&a(u_h, v_h ) = \sum_{j=1}^N a_j(u_h,v_h),\\
&A(u_h,v_h) = \sumj A_j(u_h,v_h)= \int_I u_h (v_h)_{xx} dx + \sumj  \left ( \hat u_h [(v_h)_x] - \widetilde{(u_h)_x}[v_h] \right ) \big \rvert_{\jp}.
\end{align*}
Clearly, the scheme boils down to requiring $a(u_h, v_h )=0, \forall \vh \in V_h^k.$

\medskip

 The following lemma shows  the symmetry property of $A(\cdot, \cdot)$.
\begin{Lem}[Symmetry of $A(\cdot, \cdot)$]
For $u, v \in H^{2}(\mathcal I_N)$ satisfying periodic boundary condition, we have $A(u,v) = A(v,u).$ Furthermore, $A(v, \bar v) \in \mathbb{R}$. Here and in what follows, the overline means complex conjugate.   
\end{Lem}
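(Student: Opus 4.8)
## Proof Strategy for the Symmetry Lemma

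The plan is to prove the symmetry of $A(\cdot,\cdot)$ by direct computation: starting from the ultra-weak formulation of $A(u,v)$, I would integrate by parts twice on each cell $I_j$ to shift both derivatives from $v$ back onto $u$, producing $\int_I u_{xx} v\,dx$ plus a collection of interface terms. The resulting interface terms must then be reorganized using the flux definitions \eqref{eqn:flux} so that they match exactly the interface terms appearing in $A(v,u)$. Since all functions are periodic, boundary contributions at $x_{1/2}$ and $x_{N+1/2}$ cancel, and the interface sums can be rewritten as sums over all interior interfaces $j+\tfrac12$.

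Concretely, I would first write
\[
A(u,v) = \int_I u\, v_{xx}\,dx + \sum_{j=1}^N \left( \uhat\,[v_x] - \uxt\,[v] \right)\Big|_{\jp},
\]
and on each cell integrate $\int_{I_j} u\, v_{xx}\,dx$ by parts twice to obtain $\int_{I_j} u_{xx} v\, dx$ together with the cell-boundary terms $\bigl(u\, v_x - u_x\, v\bigr)\big|_{\partial I_j}$. Summing over $j$ and collecting the contributions at a generic interior interface $x_{\jp}$, the terms involving the ``exact'' traces of $u$ and $u_x$ recombine with the flux terms $\uhat,\uxt$. The key algebraic identity to verify at each interface is that
\[
\uhat\,[v_x] - \uxt\,[v] + \bigl(\text{boundary terms from IBP on } u\bigr)
\]
equals the symmetric counterpart $\vhat\,[u_x] - \vxt\,[u] + (\text{corresponding terms})$; here $\vhat,\vxt$ denote the fluxes \eqref{eqn:flux} with $u$ replaced by $v$. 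This reduces, using \eqref{eqn:flux} and the elementary identities $[ab] = \{a\}[b] + [a]\{b\}$ and $\{a\}[b] + [a]\{b\} = a^+b^+ - a^-b^-$, to checking that the quadratic form with coefficient matrix involving $\ao,\bo,\bt$ is symmetric — which it is, precisely because $\bo$ multiplies $[u][v]$ (symmetric), $\bt$ multiplies $[u_x][v_x]$ (symmetric), and the $\ao$ terms pair $\{u\}[v_x]$ with $\{v\}[u_x]$ in a way that the IBP boundary terms absorb. I would then verify $A(u,v) = \int_I u_{xx}v\,dx + \sum_j(\vhat[u_x] - \vxt[u])|_{\jp} = A(v,u)$.

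For the second assertion, I would set $v = \bar u$ in the (now established) symmetry and in the original expression: $A(u,\bar u) = \int_I u_{xx}\bar u\, dx + \sum_j(\uhat[\bar u_x] - \uxt[\bar u])|_{\jp}$, while $A(\bar u, u) = \overline{A(u,\bar u)}$ since all flux parameters $\ao,\bo,\bt$ are real, so conjugating the defining expression for $A(u,\bar u)$ and swapping the roles of $u,\bar u$ gives $\overline{A(u,\bar u)} = A(\bar u, u)$. Combining with symmetry, $A(u,\bar u) = A(\bar u, u) = \overline{A(u,\bar u)}$, hence $A(u,\bar u)\in\mathbb{R}$.

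The main obstacle is purely bookkeeping: correctly tracking the signs and left/right trace labels when integrating by parts twice and reindexing the interface sums, and then confirming that the flux coefficients $\ao,\bo,\bt$ enter symmetrically. The cleanest route is to first prove the identity $A(u,v) - \int_I u_{xx}v\,dx = \sum_j(\vhat[u_x] - \vxt[u])|_{\jp}$ (an ``integration-by-parts-in-disguise'' statement), since its symmetric analog with roles reversed follows by the same computation, and then symmetry is immediate. I expect no genuine analytic difficulty — only the need to be careful that the periodic boundary conditions are what kill the global endpoint terms.
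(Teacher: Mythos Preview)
Your proposal is correct and follows essentially the same route as the paper: direct computation via integration by parts on each cell, followed by an algebraic verification at each interface that the flux terms with parameters $\ao,\bo,\bt$ combine symmetrically, and then deducing $A(v,\bar v)\in\mathbb{R}$ from symmetry together with the reality of the flux parameters. The only cosmetic difference is that the paper integrates by parts \emph{once} rather than twice, writing
\[
A(u,v) = -\int_I u_x v_x\,dx + \sum_{j}\bigl(\hat u\,[v_x] - [u v_x] - \widetilde{u_x}\,[v]\bigr)\Big|_{\jp},
\]
so that the volume term $-\int_I u_x v_x\,dx$ is already manifestly symmetric and only the interface terms need to be matched; this slightly shortens the bookkeeping you anticipated but is otherwise the same argument.
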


\begin{proof}
%
From integration by parts, we have
\begin{align*}
A(u,v)  = - \int_I u_x v_{x} dx + \sumj  \left ( \hat u [v_x] - [uv_x] - \widetilde{u_x}[v] \right ) \big \rvert_{\jp}.
\end{align*}
Similarly, 
$
A(v,u)= - \int_I u_x v_{x} dx + \sumj  \left ( \hat v [u_x] - [vu_x] - \widetilde{v_x}[u] \right ) \big \rvert_{\jp}.
$
Plugging in the definition of the numerical fluxes in \eqref{eqn:flux}, we have at $x_{j + \ot}, \forall j \in \zn$ 
\begin{align*}
 \hat u [v_x] - [uv_x] - \widetilde{u_x}[v] & = \big (\{u\} - \ao [u] + \bt [u_x] \big ) [v_x] - (\{u\}[v_x] + [u]\{v_x\}) \\
 			& - \big (\{u_x\} + \ao [u_x] + \bo [u] \big )[v]		\\
			& = [u_x] \big (\{v\} -\ao [v] + \bt [v_x] \big)	- [u] \big ( \{v_x\} +\ao [v_x] + \bo [v] \big)															\\
			& - \big ( [u_x] \{v\} + \{u_x\} [v] \big)			\\
			& = [u_x]\hat v - [u] \widetilde{v_x} - [u_x v],
\end{align*}
then the proof for $A(u,v) = A(v,u)$ is complete. It follows that $A(v, \bar v) - \overline{A(v, \bar v)} = A(v, \bar v) - A(\bar v, v) = 0,$ which implies $A(v, \bar v) \in \mathbb{R}.$\end{proof}

In our previous work \cite{2018arXiv180105875C}, we proved that our semi-discrete scheme is energy conservative, which is a direct result of the lemma above:
\beq
\label{eqn:l2stab}
0=a(u_h, \overline{u_h}) + \overline{a(u_h, \overline{u_h})} = \frac{d}{dt} \| u_h \|^2 - iA(u_h, \bar u_h) + i A(u_h, \bar u_h) =  \frac{d}{dt} \| u_h \|^2.\eeq
This property of our scheme is consistent with the energy conservation property of \sch equations. It is essential to have a symmetric $A(u_h,v_h)$ for designing a finite element scheme which is energy-preserving for \sch equations. Compared with discretization for diffusion equations, we don't have any extra diffusion term in \eqref{eqn:l2stab} to help with the estimates.  Therefore,  superconvergence error estimates are more challenging compared with \cite{cao2017superconvergenceddg}.

\section{Notations and Projections}
\label{sec:proj}
To facilitate the discussion, we introduce notations and define projection operators that will be used    in the paper.

\subsection{Notations}

We introduce some notations first. We define the Legendre expansion of a function $u \in L^2(I)$ on cell $I_j$ as follows,
\beq
\label{eqn:lt}
u|_{I_j} = \sum_{m=0}^\infty u_{j,m} L_{j,m}(x),
\eeq
where $L_{j,m}(x) : = L_m (\xi), \xi = \frac{x-x_j}{h_j/2}$, and $L_m(\cdot)$ is the standard Legendre polynomial of  degree $m$ on $[-1,1]$. In what follows, we write $L_{j,m}(x)$ as $L_{j,m}$, and $L_m(\xi)$ as $L_m$ for notational convenience.
We can compute $u_{j,m}$ using orthogonality of Legendre polynomials and Rodrigues' formula,
\beq
\label{eqn:ltcoefcompute}
\begin{aligned}
u_{j,m} 	& = \frac{2m+1}{h_j} \intj u(x) L_{j,m} dx  = \frac{2m+1}{2} \int_{-1}^1 \hat u_j (\xi) L_m d\xi	  = \frac{2m+1}{2} \frac{1}{2^m m!} \int_{-1}^1 \hat u_j (\xi) \frac{d}{d\xi^m} (\xi^2 -1)^m d\xi \\
		& = \frac{2m+1}{2} \frac{(-1)^l}{2^m m!} \int_{-1}^1 \frac{d}{d\xi^l} \hat u_j (\xi) \frac{d}{d\xi^{m-l}} (\xi^2 -1)^m d\xi,
\end{aligned}
\eeq
where $\hat u_j(\xi)=u(x(\xi))$ is defined as the function $u|_{I_j}$ transformed to the reference domain $[-1,1]$. By Holder's inequality, if $u \in W^{l,p}(I)$,
\beq
\label{eqn:ltcoef}
\abs{u_{j,m}} \leq C h_j^{l-\frac{1}{p}} |u|_{W^{l,p}(I_j)}, \quad 0 \leq l \leq m.
\eeq


Similar to \cite{cao2014superconvergence}, we define operator $D^{-1}$ for any integrable function $v$ on $I_j$  by
\beq
\label{eqn:ddef}
D^{-1} v(x) = \frac{2}{h_j} \int_{x_{\jm}}^x v(x) dx = \int_{-1}^\xi \hat v(\xi) d\xi, \quad x \in I_j.
\eeq
Using the property of Legendre polynomials, we have
\begin{subequations}
\begin{align}
D^{-1} L_{j,k} & = \frac{1}{2k+1} \left ( L_{j,k+1} - L_{j,k-1} \right ), \quad k \ge 1. 	\label{eqn:d-1}\\
D^{-2} L_{j,k} & = \frac{1}{2k+1} \left ( \frac{1}{2k+3} (L_{j,k+2} - L_{j,k}) - \frac{1}{2k-1} (L_{j,k} - L_{j,k-2}) \right ), \quad k \geq 2,	\label{eqn:d-2}
\end{align}
\end{subequations}
where $D^{-2} = D^{-1} \circ D^{-1}$.

Finally, we collect some additional notations that will be frequently used in the paper in Table \ref{tab:notation}.  

\begin{table}
\caption{Notations  for some frequently used quantities. Subscript $j$ can be dropped for uniform mesh.  }  
\label{tab:notation}
	\centering
	\begin{tabular}{c|c|c|c}
	\hline
	\textrm{Notation} & \textrm{Definition} &	\textrm{Notation} & \textrm{Definition} \\ \hline
	$G$ & 
$\bmat
\ot+\ao & -\bt \\
-\bo	& \ot - \ao
\emat$					&
$H$ &
$\bmat
\ot-\ao & \bt \\
\bo	& \ot + \ao
\emat$					\\\hline
	$\g_j$		& $\bo + \frac{\bt}{h_j^2}k^2(k^2-1) - \frac{2k^2}{h_j}(\ao^2 + \bo \bt + \frac{1}{4})$ &
	$\la_j$		& $ - \frac{2k}{h_j}(\ao^2 + \bo \bt - \frac{1}{4})$		\\\hline
	$L_{j,m}^-$ &
$\bmat
L_{j,m}(x_{\jp})	\\
\frac{2}{h_j} \frac{d}{dx}  L_{j,m}(x_{\jp})
\emat	$				&
	$L_{j,m}^+$ &
$\bmat
L_{j,m}(x_{\jm})	\\
\frac{2}{h_j} \frac{d}{dx} L_{j,m}(x_{\jm})
\emat$
						\\\hline
	$A_j$ & $G [L_{j,k-1}^-, L_{j,k}^-]$						&
	$B_j$ & $H [L_{j,k-1}^+, L_{j,k}^+]$					\\\hline
	$r_l$	  & $Q^l(I_2 - Q^N)^{-1}$ 							&
	$\mathcal{M}_{j,m}$	&	$(A_j+B_j)^{-1}(G L_{j,m}^- + H L_{j,m}^+)$	\\\hline
	\end{tabular}
\end{table}

\subsection{Projections}
\label{subsec:proj}
 
We summarize the definition and properties of projections in this subsection.
We denote the standard $L^2$ projection of $u$ onto $V_h^k$ by $\plt.$ Clearly,  
$
\plt u|_{I_j} = \sum_{m=0}^k u_{j,m}L_{j,m}.
$
The following projection $\pst$ was introduced  in \cite{2018arXiv180105875C}.
\begin{definition}
\label{def:pst}
For DG scheme with flux choice \eqref{eqn:flux}, we define the associated projection operator $\pst$ for any periodic function $u \in W^{1,\infty}(I) $  
to be the unique polynomial $\pst u \in V_h^k$ (when $k \geq 1$) satisfying  
\begin{subequations}
\label{eqn:pst}
\begin{align} \label{eqn:psts1}
\intj \pst u  \, \vh dx &=\intj u  \,\vh dx \quad & &\forall \,\vh \in P_c^{k-2}(I_j),\\
\label{eqn:psts2}
  \widehat{\pst u}  =\{\pst u\} -\ao [\pst u] +\bt[(\pst u)_x]   &= u  & &\textrm{at} \quad x_{\jp}, \\
\label{eqn:psts3}
\widetilde{(\pst u)_x}=\{(\pst u)_x\}+\ao [(\pst u)_x] +\bo[\pst u] & =u_x & &\textrm{at} \quad x_{\jp},
\end{align}
\end{subequations}
for all $j \in \zn$. When
$k=1$,  only conditions \eqref{eqn:psts2}-\eqref{eqn:psts3}  are needed.
\end{definition}

\eqref{eqn:psts2}-\eqref{eqn:psts3} is equivalent to
\begin{align}
\label{eqn:psteq}
& G \begin{bmatrix}
\pst u 	\\
(\pst u)_x
\end{bmatrix}
\bigg \rvert_{x_{j+\ot}}^-
+
H \begin{bmatrix}
\pst u 	\\
(\pst u)_x
\end{bmatrix}
\bigg \rvert_{x_{j+\ot}}^+
=
G
\begin{bmatrix}
 u 	\\
u_x
\end{bmatrix}
\bigg \rvert_{x_{j+\ot}}
+
H \begin{bmatrix}
 u 	\\
u_x
\end{bmatrix}
\bigg \rvert_{x_{j+\ot}}.
\end{align}

In \cite{2018arXiv180105875C}, the properties of $\pst$ with general parameter choice $\ao, \bo, \bt$ are shown, which is a key step to establish optimal convergence of the UWDG for many cases. 
In this paper, we only consider $k \geq 2,$ which is required for superconvergence properties to hold.
 
For completeness of the paper, we will briefly summarize the properties of $\pst$ under the our assumptions as follows.  
Based on the results in \cite{2018arXiv180105875C}, the existence and uniqueness of $\pst$ is guaranteed if any of the  following assumptions is satisfied. 
\begin{itemize}
\item A1. (Local projection) scale invariant flux, $\ao^2 + \bo \bt = \frac{1}{4}$ and $\Gamma_j \neq 0$.
\item A2. (Global projection) scale invariant flux,  uniform mesh ($h_j=h, \forall j$), $\ao^2 + \bo \bt \neq \frac{1}{4}$ and $\abs{ \frac{\g}{\la}} > 1$.  
\item A3. (Global projection) scale invariant flux, uniform mesh ($h_j=h, \forall j$), $\ao^2 + \bo \bt \neq \frac{1}{4}$. Besides, either $\abs{ \frac{\g}{\la}} = 1$, $\left ( (-1)^{k+1}\frac{\g}{\la}\right )^N \neq 1$ with $N$ being an odd number or $\abs{ \frac{\g}{\la}} < 1$, $\left ( (-1)^{k+1}\frac{\g}{\la} + \sqrt{\left (\frac{\g}{\la} \right )^2 - 1} \right )^N \neq 1.$
\end{itemize}

For example, alternating fluxes satisfy A1, central flux satisfies A2, IPDG, DDG and other more general  fluxes may satisfy any of the assumptions A1/A2/A3 depending on the parameters.
Here, the word ``local" refers to the fact that $\pst$ can be locally determined on each cell $I_j$. Otherwise $\pst$ is a global projection and $\pst u$ is the solution of a $2N \times 2N$ block-circulant system, where the coefficient matrix   is
\beq
\label{eqn:mdef}
M=circ(A,B,0_2,\cdots,0_2),
\eeq
denoting a block-circulant matrix with first two rows as $(A,B,0_2,\cdots,0_2)$, $0_2$ is the $2\times 2$ zero matrix, and $A, B$ are defined in  Table \ref{tab:notation}.

  Assumptions A2 and A3 ensure the global matrix $M$ is invertible.  It's known that
\beq
\label{eqn:minv}
M^{-1} = circ(r_0, \cdots, r_{N-1}) \otimes A^{-1},
\eeq
 where $\otimes$ means Kronecker product for block matrices, 
 \beq
 \label{eqn:rj}
 r_j=Q^j (I_2-Q^N)^{-1}, \quad Q = -A^{-1}B.
 \eeq
With assumption A2, the eigenvalues of $Q$ are real and distinct. With assumption A3, the eigenvalues are either complex or repeated. 

To shorten the notation, from here on  we use two notations $C_{m}$ and $C_{m,n}$ to denote mesh independent constants.
$C_m$ may depend on  $| u |_{W^{k+1+m,\infty}(I)}$ for assumptions A1/A2,  and  $\| u \|_{W^{k+3+m,\infty}(I)}$ for assumption A3 when $\abs{ \frac{\g}{\la}} < 1$, $\| u \|_{W^{k+4+m,\infty}(I)}$ for assumption A3 when $\abs{ \frac{\g}{\la}} =1$. 
$C_{m,n}$ may depend on  $| u |_{W^{k+1+m+2n, \infty}(I)}$ for assumptions A1/A2, on $\| u \|_ {W^{k+3+m+3n, \infty}(I)}$ for assumption A3 when $\frac{|\g|}{|\la|} < 1$ and on $ \| u \|_ {W^{k+4+m+4n, \infty}(I)}$ for assumption A3 when $\frac{|\g|}{|\la|} = 1$.

We have the following  lemma for some algebraic formulas that will be used several times in later sections.
\begin{Lem}
\label{lem:mjm}
Suppose any of the assumptions A1/A2/A3 holds. Define
\beq
\label{eqn:mjmdef}
\mathcal{M}_{j,m} = (A_j+B_j)^{-1}(G L_{j,m}^- + H L_{j,m}^+), \quad \forall m \in \mathbb{Z}^+, \forall j \in \zn,
\eeq
where $G, H, A_j, B_j, L_{j,m}^-, L_{j,m}^+, \g_j, \la_j$ are defined   in Table \ref{tab:notation}. Then $\forall j \in \zn$,
\beq
\label{eqn:mjm}
\left \| (A_j+B_j)^{-1} G  \bmat 1 & 0 \\ 0 & \frac{1}{h_j} \emat \right \|_\infty \leq C, \ \left \| (A_j+B_j)^{-1} H  \bmat 1 & 0 \\ 0 & \frac{1}{h_j} \emat \right \|_\infty \leq C, \  \left \| \mathcal{M}_{j, m} \right \|_\infty \leq C.
\eeq
\end{Lem}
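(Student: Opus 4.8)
The plan is to establish the three bounds in \eqref{eqn:mjm} by reducing everything to the reference cell $[-1,1]$ and using the scale invariance of the flux parameters. First I would write out the scaled quantities explicitly: recall $L_{j,m}^\pm = \left(L_m(\pm1), \tfrac{2}{h_j}\tfrac{d}{dx}L_{j,m}(x_{j\pm\ot})\right)^T = \left(L_m(\pm1), L_m'(\pm1)\right)^T$, so the column $L_{j,m}^\pm$ is in fact $h_j$-independent once one reads it as a vector of $O(1)$ entries (here $L_m(\pm1)=(\pm1)^m$ and $L_m'(\pm1)=(\pm1)^{m-1}\tfrac{m(m+1)}{2}$). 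With the scale invariant choice $\ao=\tilde\ao$, $\bo=\tilde\bo h_j^{-1}$, $\bt=\tilde\bt h_j$, the matrix $G$ has a $-\bt = -\tilde\bt h_j$ in the upper-right corner and a $-\bo=-\tilde\bo h_j^{-1}$ in the lower-left corner, so the right scaling to extract is $G\,\mathrm{diag}(1, h_j^{-1})$, or equivalently $\mathrm{diag}(1,h_j)\,G\,\mathrm{diag}(1,h_j^{-1})$, which is a fixed $h_j$-independent matrix $\tilde G$ (and similarly $\tilde H$). The key algebraic identity I would record is
\begin{equation*}
A_j+B_j = G[L_{j,k-1}^-,L_{j,k}^-] + H[L_{j,k-1}^+,L_{j,k}^+] = \Big(\mathrm{diag}(1,h_j^{-1})\Big)^{-1}\Big(\tilde G[\ell_{k-1}^-,\ell_k^-] + \tilde H[\ell_{k-1}^+,\ell_k^+]\Big),
\end{equation*}
where $\ell_m^\pm = (L_m(\pm1), L_m'(\pm1))^T$ are the $O(1)$ reference vectors; denote the bracketed $2\times2$ matrix by $\tilde S$, which is $h_j$-independent.

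Next I would argue that $\tilde S$ is invertible with an inverse bounded independently of the mesh. This is exactly the point where the structural assumption enters: a direct computation (the one underlying the definitions of $\g_j,\la_j$ in Table~\ref{tab:notation}) shows $\det(A_j+B_j)$ is, up to the known scaling factor $h_j$, a nonzero multiple of $\g_j$ in the local case A1, or the relevant $\la_j$-type quantity controlling invertibility of the global system in cases A2/A3; in all three cases the hypotheses ($\Gamma_j\neq0$, $|\g/\la|>1$, or the root conditions of A3) were precisely designed so that $\det\tilde S$ is bounded away from $0$ uniformly in $j$ and under mesh refinement. Granting that, $\|\tilde S^{-1}\|_\infty\le C$, hence
\begin{equation*}
(A_j+B_j)^{-1} = \tilde S^{-1}\,\mathrm{diag}(1,h_j^{-1}),
\end{equation*}
and therefore $(A_j+B_j)^{-1}G\,\mathrm{diag}(1,h_j^{-1}) = \tilde S^{-1}\,\mathrm{diag}(1,h_j^{-1})\,G\,\mathrm{diag}(1,h_j^{-1})$. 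The issue is that $\mathrm{diag}(1,h_j^{-1})\,G\,\mathrm{diag}(1,h_j^{-1})$ is not $O(1)$ on its own, but combining with the earlier observation $G = \mathrm{diag}(1,h_j^{-1})^{-1}\tilde G\,\mathrm{diag}(1,h_j^{-1})$ one gets $\mathrm{diag}(1,h_j^{-1})\,G\,\mathrm{diag}(1,h_j^{-1}) = \tilde G\,\mathrm{diag}(1,h_j^{-1})$ — still not obviously bounded. So I would instead pair the scalings more carefully: write $(A_j+B_j)^{-1}G\,\mathrm{diag}(1,h_j^{-1})$ and note the correct bookkeeping is $(A_j+B_j) = \mathrm{diag}(1,h_j)^{-1}\tilde S$ with $\tilde S$ containing the $\tilde G,\tilde H$ that already absorbed one $\mathrm{diag}(1,h_j^{-1})$ on the right of $G,H$; then $(A_j+B_j)^{-1}G\,\mathrm{diag}(1,h_j^{-1}) = \tilde S^{-1}\,\mathrm{diag}(1,h_j)\,G\,\mathrm{diag}(1,h_j^{-1}) = \tilde S^{-1}\tilde G$, which is manifestly $h_j$-independent and bounded. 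The same computation with $H$ gives the second bound, and for $\mathcal M_{j,m}$ we have $GL_{j,m}^- + HL_{j,m}^+ = \mathrm{diag}(1,h_j)^{-1}(\tilde G\ell_m^- + \tilde H\ell_m^+)$, so $\mathcal M_{j,m} = \tilde S^{-1}(\tilde G\ell_m^- + \tilde H\ell_m^+)$, again bounded uniformly (with a constant depending on $m$ through $|\ell_m^\pm|\sim m^2$, but $m$ is fixed in each application).

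The main obstacle is the uniform invertibility of $\tilde S$, i.e.\ showing $|\det\tilde S|$ is bounded below independently of $j$ and $h$. In the local case A1 this is a one-line determinant computation tied to $\Gamma_j\neq0$, and since $\Gamma_j$ for scale invariant flux is $\tilde\bo h_j^{-1} + \cdots$ one must check the relevant normalization makes it $O(1)$ and bounded away from $0$ (this is where the mesh regularity $h/\min h_j<\sigma$ is used). In the global cases A2/A3, $\tilde S$ is itself invertible whenever the relevant local $2\times2$ data is nondegenerate — which holds generically — but the honest statement is that one invokes the detailed analysis of \cite{2018arXiv180105875C} summarized above (invertibility of $M$, formula \eqref{eqn:minv}, eigenstructure of $Q$) to conclude the needed uniform bound; I would cite that rather than re-derive it. Everything else is elementary linear algebra on fixed $2\times2$ matrices plus the scale invariance bookkeeping, so the write-up is short once the determinant lower bound is in hand.
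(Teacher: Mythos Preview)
Your approach is essentially the same as the paper's: both reduce everything to reference-cell quantities via diagonal scalings and then check that the resulting $2\times2$ matrices are $h$-independent (or depend only on the bounded ratio $h/h_j$). The paper simply carries out the computation explicitly---it writes $A_j+B_j$ in terms of two fixed matrices $M_\pm$, inverts by hand to exhibit the determinant $D_1\propto h_j\big((-1)^k\Gamma_j+\Lambda_j\big)$, and then multiplies out the three products, noting at the end that only ratios $h/h_j$ survive.

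Two small corrections to your write-up. First, the scale invariant flux in the paper uses $h=\max_j h_j$, not $h_j$, in $\beta_1=\tilde\beta_1 h^{-1}$, $\beta_2=\tilde\beta_2 h$; so on a nonuniform mesh your $\tilde G,\tilde H,\tilde S$ are not literally $h_j$-independent but carry $h/h_j$ factors, and this is precisely where the mesh regularity bound $h/\min_j h_j<\sigma$ enters. Second, for A2/A3 you do not need to invoke the global block-circulant analysis of \cite{2018arXiv180105875C}: the local determinant is $\det(A+B)=2\big((-1)^k\Gamma+\Lambda\big)$, which is nonzero because A2 forces $|\Gamma/\Lambda|>1$ and A3's root condition rules out $(-1)^{k+1}\Gamma/\Lambda=1$. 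So the invertibility of your $\tilde S$ is a one-line local check in all three cases, just as in A1.
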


\begin{proof}
If the mesh is uniform, the three matrices in \eqref{eqn:mjm} are independent of mesh size $h$ and the inequalities in \eqref{eqn:mjm} hold.
For nonuniform mesh, the proof is given in Appendix \ref{sec:mjmproof}.
\end{proof}


Since $\pst u \in V_h^k$, it can be expressed  in Legendre basis as
\beq
\label{eqn:pstcoef}
\pst u|_{I_j} = \sum_{m=0}^{k} \acute u_{j,m} L_{j,m}.
\eeq
By \eqref{eqn:psts1}, $u - \pst u \perp V_h^{k-2}$, thus $\acute u_{j,m} = u_{j,m}, \forall m \leq k-2$. 
 
The following Lemma is a summary and extension of the results in Lemmas 3.2, 3.4, 3.8, 3.9 in \cite{2018arXiv180105875C}.
\begin{Lem}
\label{lem:pstest}  
Suppose any of the assumptions A1/A2/A3 holds, for $u$ satisfying the condition in Definition \ref{def:pst}. If assumption A1 is satisfied, then \eqref{eqn:psteq} is equivalent to $\forall j \in \zn$,
\beq
\label{eqn:psteq2}
G \begin{bmatrix}
\pst u 	\\
(\pst u)_x
\end{bmatrix}
\bigg \rvert_{x_{j+\ot}}^-
+
H \begin{bmatrix}
\pst u 	\\
(\pst u)_x
\end{bmatrix}
\bigg \rvert_{x_{j-\ot}}^+
=
G
\begin{bmatrix}
 u 	\\
u_x
\end{bmatrix}
\bigg \rvert_{x_{j+\ot}}
+
H \begin{bmatrix}
 u 	\\
u_x
\end{bmatrix}
\bigg \rvert_{x_{j-\ot}},
\eeq
thus making $\pst$ a local projection and
\beq
\label{eqn:pstcoefa1}
\bmat
\acute u_{j,k-1}	\\
\acute u_{j,k}
\emat
=
\begin{bmatrix}
u_{j,k-1}	\\
u_{j,k}
\end{bmatrix}
+
\sum_{m=k+1}^\infty u_{j,m} \mathcal{M}_{j, m}.
\eeq
If any of the assumptions A2/A3 is satisfied, then
\beq
\label{eqn:pstcoefa23}
\bmat
\acute u_{j,k-1}	\\
\acute u_{j,k}
\emat
=
\bmat
u_{j,k-1}	\\
u_{j,k}
\emat
+ \sum_{m=k+1}^\infty \big (u_{j,m} V_{1,m} +  \suml u_{j+l,m} r_l V_{2,m} \big ),
\eeq
where $V_{1,m} = [ L_{k-1}^+, L_k^+ ] ^{-1} L_m^+$, $V_{2,m} = [ L_{k-1}^- , L_k^- ]^{-1} L_m^- - [ L_{k-1}^+, L_k^+ ]^{-1} L_m^+$.

We have the following estimates
\beq
\label{eqn:pstest}
\abs{\acute u_{j,m} - u_{j,m} } \leq C_0 h^{k+1}, m = k-1, k, \quad \| u - \pst u \|_{L^{\nu}(\mathcal I_N)} \leq C_0 h^{k+1}, \quad \nu = 2, \infty.
\eeq
In addition, if $h_j = h_{j+1}$,
\beq
\label{eqn:pstcoefdiff}
\abs{\acute u_{j,m} - u_{j,m} - (\acute u_{j+1,m} - u_{j+1,m}) } \leq C_1 h^{k+2}, \quad m=k-1,k.
\eeq
\end{Lem}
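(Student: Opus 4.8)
The plan is to work entirely at the level of the Legendre coefficients and exploit the explicit formulas \eqref{eqn:pstcoefa1} and \eqref{eqn:pstcoefa23}. The first two estimates in \eqref{eqn:pstest} follow directly: from \eqref{eqn:ltcoef} one has $|u_{j,m}| \le C h^{m}|u|_{W^{m,\infty}(I_j)}$, hence $|u_{j,m}| \le C h^{k+1} |u|_{W^{k+1,\infty}}$ for all $m \ge k+1$; combining this with the uniform bounds $\|\mathcal{M}_{j,m}\|_\infty \le C$ (resp. the boundedness of $r_l$ and of $V_{1,m},V_{2,m}$ in the sup-norm, which is where assumptions A2/A3 and Lemma \ref{lem:mjm} enter) and summing the geometric-type tail $\sum_{m \ge k+1}$, one gets $|\acute u_{j,m}-u_{j,m}| \le C_0 h^{k+1}$ for $m=k-1,k$. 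The $L^2$ and $L^\infty$ bounds on $u-\pst u$ then follow by writing $u - \pst u = (u - \plt u) + (\plt u - \pst u)$: the first term is the standard $L^2$-projection error of size $h^{k+1}$, and the second is the finite sum $(\acute u_{j,k-1}-u_{j,k-1})L_{j,k-1} + (\acute u_{j,k}-u_{j,k})L_{j,k}$, whose coefficients we just bounded and whose Legendre basis functions are $O(1)$ in $L^\infty$.

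The substantive part is \eqref{eqn:pstcoefdiff}, the difference estimate on two adjacent equal-size cells. Here I would \emph{not} bound $\acute u_{j,m}-u_{j,m}$ and $\acute u_{j+1,m}-u_{j+1,m}$ separately — that only gives $h^{k+1}$ — but instead subtract the two representation formulas and extract one extra power of $h$ from a cancellation. In the local case A1, \eqref{eqn:pstcoefa1} gives
\[
(\acute u_{j,m}-u_{j,m}) - (\acute u_{j+1,m}-u_{j+1,m}) = \sum_{m'=k+1}^\infty \big( u_{j,m'}\mathcal{M}_{j,m'} - u_{j+1,m'}\mathcal{M}_{j+1,m'}\big),
\]
and since $h_j=h_{j+1}$ the matrices $\mathcal{M}_{j,m'}$ and $\mathcal{M}_{j+1,m'}$ coincide (they depend only on $h_j$, $k$, and the scale-invariant parameters), so the right side is $\sum_{m'\ge k+1}(u_{j,m'}-u_{j+1,m'})\mathcal{M}_{j,m'}$. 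The key point is then that $|u_{j,m'}-u_{j+1,m'}| \le C h^{m'+1}|u|_{W^{m'+1,\infty}}$ for $m'\ge k+1$: consecutive Legendre coefficients of a smooth function on equal adjacent cells differ by one higher order, which one sees from \eqref{eqn:ltcoefcompute} by writing the coefficient on $I_{j+1}$ as an integral over $I_j$ plus a remainder, or more simply by Taylor-expanding $u$ about the shared node $x_{j+1/2}$ and using that the $(m')$-th coefficient of a polynomial of degree $<m'$ vanishes. This yields $C h^{k+2}$ after summing the tail. For the global case A2/A3 the same idea applies to \eqref{eqn:pstcoefa23}: the $u_{j,m'}V_{1,m'}$ term is handled as above since $V_{1,m'}$ is $j$-independent on a uniform mesh, and for the convolution term $\sum_l u_{j+l,m'} r_l V_{2,m'}$ one rewrites the difference as $\sum_l (u_{j+l,m'}-u_{j+1+l,m'}) r_l V_{2,m'}$ (a telescoping/index-shift in the circulant sum, using periodicity), and then the coefficient differences are again $O(h^{m'+1})$ while $\sum_l |r_l|$ is bounded uniformly in $N$ by the spectral structure of $Q$ recorded after \eqref{eqn:rj}.

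The main obstacle I anticipate is making the bound $|u_{j,m'}-u_{j+1,m'}| \le C h^{m'+1}|u|_{W^{m'+1,\infty}(I_j \cup I_{j+1})}$ precise and uniform, together with controlling the $\sum_l |r_l|$ factor uniformly in $N$ in the global cases — the latter is exactly why the different regularity indices ($k+3$, $k+4$) appear in the definition of $C_1$ under A3, since $\|r_l\|$ decays geometrically at a rate governed by $|\gamma/\lambda|$ and one must absorb the borderline $|\gamma/\lambda|=1$ case by spending extra derivatives. Everything else is geometric-series bookkeeping over the Legendre tail $m'\ge k+1$, which I would not carry out in detail, citing Lemmas 3.2, 3.4, 3.8, 3.9 of \cite{2018arXiv180105875C} for the parts already established there and only supplying the adjacent-cell cancellation argument that upgrades $h^{k+1}$ to $h^{k+2}$.
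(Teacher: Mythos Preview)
Your argument for A1 and A2 is essentially what the paper does: subtract the representation formulas, use that $\mathcal{M}_{j,m}$ (resp.\ $V_{1,m},V_{2,m}$) are $j$-independent when $h_j=h_{j+1}$, and invoke the one-order gain $|u_{j,m}-u_{j+1,m}|\le Ch^{k+2}$ from the Taylor expansion in \eqref{eqn:ujmdiff}; under A2 the bound $\sum_l\|r_l\|_\infty\le C$ in \eqref{eqn:minv1} then finishes the convolution term exactly as you describe.

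The gap is in A3. You claim that ``$\sum_l|r_l|$ is bounded uniformly in $N$ by the spectral structure of $Q$'' and that ``$\|r_l\|$ decays geometrically at a rate governed by $|\gamma/\lambda|$.'' This is false under A3: there $|\Gamma/\Lambda|\le 1$, and the eigenvalues of $Q=-A^{-1}B$ lie \emph{on} the unit circle (complex conjugate when $|\Gamma/\Lambda|<1$, repeated $\pm1$ when $|\Gamma/\Lambda|=1$). Consequently $\|r_l\|_\infty$ does not decay at all, and in fact $\sum_l\|r_l\|_\infty\le Ch^{-2}$ is the best crude bound, see \eqref{eqn:minv2}. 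Simply summing absolute values therefore destroys the estimate.

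The paper's fix is not to ``spend extra derivatives to absorb a slow geometric rate,'' but to exploit cancellation in the oscillatory sum. One Taylor-expands $u_{j+l,m}-u_{j+l+1,m}$ to higher order via \eqref{eqn:ujmdiffest}, isolating smooth functions of the node $x_{j+l-1/2}$; the remainder is handled crudely using \eqref{eqn:minv2} at the cost of two (or three) extra derivatives. For the main terms one writes $r_l=\frac{\lambda_1^l}{1-\lambda_1^N}Q_1+\frac{\lambda_2^l}{1-\lambda_2^N}(I_2-Q_1)$ (or the Jordan form analogue when $|\Gamma/\Lambda|=1$) and recognizes $\sum_l \lambda^l f(x_{j+l-1/2})$ as the convolution-type operator $\boxtimes_\lambda f$ of \eqref{eqn:sumladef1}, which is bounded by a Sobolev norm of $f$ via Fourier analysis (equation \eqref{eqn:sumlaest}), not by $\sum_l|\lambda|^l$. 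This is precisely where the higher regularity indices in $C_1$ under A3 originate, and it is a genuinely different mechanism from the geometric-sum picture you sketch.
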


\begin{proof}
Proof is given in Appendix \ref{apdx:pstproof}.
\end{proof}

With the optimal estimates of $\pst u$, we proved the optimal $L^2$ error estimate of the DG scheme in Theorem 3.10 in \cite{2018arXiv180105875C}, which is restated below.
\begin{Thm}\cite{2018arXiv180105875C}
\label{thm:l2converge}
Suppose any of the assumptions A1/A2/A3 holds, let the exact solution $u$ of \eqref{eqn:ls} be sufficiently smooth, satisfying periodic boundary condition and $u_h$ be the DG solution in \eqref{eqn:scheme}, then
\beq
\label{eqn:l2converge}
\| \pst u - u_h \| \leq C_2 h^{k+1}, \quad \| u - u_h \| \leq C_2 h^{k+1}.
\eeq
\end{Thm}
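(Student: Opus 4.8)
The plan is to run a standard energy estimate anchored on the special projection $\pst$. Write the error as $u - u_h = \eta - \xi$ with $\eta := u - \pst u$ and $\xi := u_h - \pst u \in V_h^k$ (note $\bar\xi \in V_h^k$ as well, since $V_h^k$ consists of complex polynomials). Because the exact solution of \eqref{eqn:ls} is smooth, its one-sided traces agree, so the numerical fluxes evaluated on $u$ reduce to $\hat u = u$ and $\widetilde{u_x}=u_x$; integrating by parts twice on each cell then gives $A_j(u,v_h)=\int_{I_j}u_{xx}v_h\,dx$, hence $a_j(u,v_h)=\int_{I_j}(u_t-iu_{xx})v_h\,dx = 0$ for every $v_h\in V_h^k$. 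Subtracting \eqref{eqn:scheme} yields the Galerkin orthogonality $a(u-u_h,v_h)=0$, i.e. $a(\xi,v_h)=a(\eta,v_h)$ for all $v_h\in V_h^k$.

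Next I would reproduce the computation in \eqref{eqn:l2stab} with $u_h$ replaced by $\xi$: taking $v_h=\bar\xi$ and adding the complex conjugate, the symmetry of $A(\cdot,\cdot)$ (and $A(\xi,\bar\xi)\in\mathbb R$) cancels the imaginary contributions, so
\[
\frac{d}{dt}\|\xi\|^2 = a(\xi,\bar\xi)+\overline{a(\xi,\bar\xi)} = a(\eta,\bar\xi)+\overline{a(\eta,\bar\xi)} = 2\,\mathrm{Re}\,a(\eta,\bar\xi).
\]
Thus everything reduces to estimating $a(\eta,\bar\xi)=\int_I\eta_t\,\bar\xi\,dx - iA(\eta,\bar\xi)$. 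The heart of the argument is that $A(\eta,\bar\xi)=0$, which is exactly what $\pst$ was designed for. Using the form $A(\eta,\bar\xi)=\int_I\eta\,\bar\xi_{xx}\,dx+\sum_j\big(\hat\eta\,[\bar\xi_x]-\widetilde{\eta_x}\,[\bar\xi]\big)\big|_\jp$: conditions \eqref{eqn:psts2}--\eqref{eqn:psts3} together with the smoothness of $u$ give $\hat\eta=\widehat{u}-\widehat{\pst u}=u-u=0$ and $\widetilde{\eta_x}=0$ at every interface, which kills all the boundary terms; and \eqref{eqn:psts1} gives $\eta\perp V_h^{k-2}$, so $\int_I\eta\,\bar\xi_{xx}\,dx=0$ since $\bar\xi_{xx}$ is a cellwise polynomial of degree $\le k-2$ (we only need $k\ge 2$ here). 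For the remaining term, since $\pst$ is a fixed linear operator it commutes with $\partial_t$, so $\eta_t=u_t-\pst(u_t)$ is the projection error of $u_t$, and \eqref{eqn:pstest} applied to $u_t$ yields $\|\eta_t\|\le C_0 h^{k+1}$; hence $|a(\eta,\bar\xi)|=\big|\int_I\eta_t\,\bar\xi\,dx\big|\le C_0 h^{k+1}\|\xi\|$.

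Plugging this into the displayed identity gives $\frac{d}{dt}\|\xi\|\le C_0 h^{k+1}$, and integrating in time, $\|\xi(t)\|\le\|\xi(0)\|+T_e C_0 h^{k+1}$. Choosing the initial discretization $u_h(0)=\pst u_0$ (or $u_h(0)=\plt u_0$, for which $\|\xi(0)\|\le\|u_0-\pst u_0\|+\|u_0-\plt u_0\|\le C_0 h^{k+1}$ by \eqref{eqn:pstest}) bounds $\|\xi(0)\|$ by $C_0 h^{k+1}$, so $\|\pst u-u_h\|=\|\xi\|\le C_2 h^{k+1}$; the estimate for $\|u-u_h\|$ follows from the triangle inequality together with $\|u-\pst u\|\le C_0 h^{k+1}$ in \eqref{eqn:pstest}. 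The one genuinely delicate point is the identity $A(\eta,\bar\xi)=0$: it is here, and only here, that the algebraic structure behind assumptions A1/A2/A3 — guaranteeing $\pst$ is well defined and satisfies \eqref{eqn:psteq} along with the orthogonality \eqref{eqn:psts1} — is used. Everything else is soft: energy conservation from the symmetry lemma, approximation of $\eta$ and $\eta_t$ from Lemma \ref{lem:pstest}, and Gronwall in its trivial linear form. I would therefore expect the write-up to spend almost all of its effort on verifying the flux cancellations $\hat\eta=\widetilde{\eta_x}=0$ and the perpendicularity, and essentially none on the energy estimate itself.
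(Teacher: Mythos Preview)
Your argument is correct and is the standard energy-method proof: the projection $\pst$ is engineered precisely so that $A(\eta,\bar\xi)=0$ (flux conditions \eqref{eqn:psts2}--\eqref{eqn:psts3} kill the boundary terms, orthogonality \eqref{eqn:psts1} kills the volume term), leaving only $\int_I\eta_t\bar\xi\,dx$, which is controlled by Lemma~\ref{lem:pstest} applied to $u_t$ and then Gronwall. Note that the present paper does not give its own proof of this theorem at all; it simply restates Theorem~3.10 of \cite{2018arXiv180105875C}, so there is nothing in this paper to compare against---but your sketch is exactly the argument one expects and the dependence on $u_t=iu_{xx}$ correctly accounts for the constant $C_2$.
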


 \bigskip
Next, we introduce a local projection $\pdag$ as a variant of $\pst$ 
and study its approximation properties. Similar ideas have been employed
 in \cite{cao2017superconvergence} for proving the superconvergence at 
 the so-called generalized Radau points when using upwind-biased flux 
 for hyperbolic equations. This projection will help us reveal the superconvergence results at special points.

\begin{definition}
\label{def:pdag}
For DG scheme with flux choice \eqref{eqn:flux}, we define a local projection operator $\pdag$ associated with flux choice \eqref{eqn:flux} for any periodic function $u \in W^{1,\infty}(I) $ to be the unique polynomial $\pdag u \in V_h^k$ (when $k \geq 1$) satisfying 
\begin{subequations}
\label{eqn:pdag}
\begin{align} \label{eqn:pdag1}
\intj \pdag u  \, \vh dx &=\intj u  \,\vh dx,  \quad \forall \vh \in P_c^{k-2}(I_j),\\
\label{eqn:pdagflux}
G \begin{bmatrix}
\pdag u 	\\
(\pdag u)_x
\end{bmatrix}
\bigg \rvert_{x_{j+\ot}}^-
+
H  \begin{bmatrix}
\pdag u 	\\
(\pdag u)_x
\end{bmatrix}
\bigg \rvert_{x_{j-\ot}}^+
& =
G
\begin{bmatrix}
 u 	\\
u_x
\end{bmatrix}
\bigg \rvert_{x_{j+\ot}}
+
H \begin{bmatrix}
 u 	\\
u_x
\end{bmatrix}
\bigg \rvert_{x_{j-\ot}}
\end{align}
\end{subequations}
for all $j \in \zn$. When $k=1$,  only condition \eqref{eqn:pdagflux} is needed.
\end{definition}

Projection $\pdag$ is   always a local projection. Denote $\pdag u|_{I_j} = \sum_{m=0}^k \grave u_{j, m} L_{j, m}$, by \eqref{eqn:pdag1}, $\grave u_{j, m} = u_{j, m}, m \le k-2$. The similarities in definition imply that $\pst$ and $\pdag$ are very close to each other, as shown in the following lemma. 


\begin{Lem}
\label{lem:pdagprop}  
For periodic function $u \in W^{1,\infty}(I)$, if assumption A1 is satisfied, $\pst u = \pdag u.$ If any of the assumptions A2/A3 is satisfied, $\pdag$ exists and is uniquely defined if 
$(-1)^{k+1}\frac{\g_j}{\la_j} \neq 1$ for all $j \in \zn$. Then,
\beq
\label{eqn:pdagest}
\| u - \pdag u \|_{L^\nu (I_j)}  \leq Ch^{k+1} |u |_{W^{k+1,\nu}(I_j)}, \quad \nu =2, \infty.
\eeq

If any of the assumptions A2/A3 is satisfied, we have 
\beq
\label{eqn:projdiff}
\| \pst u - \pdag u \|_{L^\nu (\mathcal{I_N})}   \leq  C_1 h^{k+2},	\quad \nu =2, \infty.
\eeq

\end{Lem}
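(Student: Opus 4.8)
The plan is to handle the three assertions separately, putting essentially all the effort into the closeness estimate \eqref{eqn:projdiff}; the first two are almost a transcription of the projection analysis of \cite{2018arXiv180105875C}. For the first claim I would use Lemma \ref{lem:pstest}: under A1 the global defining relation \eqref{eqn:psteq} of $\pst$ collapses to the local relation \eqref{eqn:psteq2}, and since \eqref{eqn:psts1} coincides with \eqref{eqn:pdag1} and \eqref{eqn:psteq2} is exactly \eqref{eqn:pdagflux}, the polynomial $\pst u$ satisfies every condition in Definition \ref{def:pdag}; uniqueness then gives $\pst u=\pdag u$. For existence/uniqueness and approximation under A2/A3 I would argue cellwise: writing $\pdag u|_{I_j}=\sum_{m=0}^k\grave u_{j,m}L_{j,m}$, \eqref{eqn:pdag1} and Legendre orthogonality force $\grave u_{j,m}=u_{j,m}$ for $m\le k-2$, and inserting the Legendre expansions of $u$ and $\pdag u$ into \eqref{eqn:pdagflux} and cancelling the degree $\le k-2$ part leaves the $2\times2$ system
\[
(A_j+B_j)\begin{bmatrix}\grave u_{j,k-1}-u_{j,k-1}\\ \grave u_{j,k}-u_{j,k}\end{bmatrix}=\sum_{m=k+1}^{\infty}u_{j,m}\bigl(GL_{j,m}^- + HL_{j,m}^+\bigr),
\]
in the notation of Table \ref{tab:notation}. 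Thus $\pdag u$ exists and is unique precisely when $\det(A_j+B_j)\neq0$, and a direct computation (as done for $\pst$ in \cite{2018arXiv180105875C}) identifies this with $(-1)^{k+1}\g_j/\la_j\neq1$. The right-hand side above is the $L^2$-projection error of $u$ sampled at $\partial I_j$ (up to the mesh-size scalings absorbed by Lemma \ref{lem:mjm}), hence $O(h^{k+1}|u|_{W^{k+1,\nu}(I_j)})$; composing with $(A_j+B_j)^{-1}$ and adding $\|u-\plt u\|_{L^\nu(I_j)}$ yields \eqref{eqn:pdagest}. (Equivalently, $\pdag$ reproduces $P_c^k(I_j)$ and is $h$-uniformly stable, so a scaling/Bramble--Hilbert argument applies; this step is just the A1 analysis of $\pst$ in \cite{2018arXiv180105875C} run with the A2/A3 flux parameters.)

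For \eqref{eqn:projdiff} I would work on the uniform mesh ($h_j\equiv h$, so $A_j\equiv A$, $B_j\equiv B$, $L_{j,m}^\pm\equiv L_m^\pm$) and set $\vec e_j:=[\acute u_{j,k-1}-u_{j,k-1},\,\acute u_{j,k}-u_{j,k}]^T$ and $\vec d_j:=[\grave u_{j,k-1}-u_{j,k-1},\,\grave u_{j,k}-u_{j,k}]^T$. Feeding the Legendre expansions into the \emph{global} relation \eqref{eqn:psteq} and using continuity of $u$ across interfaces gives $A\vec e_j+B\vec e_{j+1}=G\vec\rho_j+H\vec\sigma_{j+1}$ with $\vec\rho_j:=\sum_{m\ge k+1}u_{j,m}L_m^-$ and $\vec\sigma_j:=\sum_{m\ge k+1}u_{j,m}L_m^+$, while the displayed identity above reads $(A+B)\vec d_j=G\vec\rho_j+H\vec\sigma_j$. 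Writing $\vec e_{j+1}=\vec e_j+(\vec e_{j+1}-\vec e_j)$ and subtracting,
\[
(A+B)(\vec e_j-\vec d_j)=H(\vec\sigma_{j+1}-\vec\sigma_j)-B(\vec e_{j+1}-\vec e_j).
\]
On a uniform mesh $\vec\sigma_{j+1}-\vec\sigma_j=\sum_{m\ge k+1}(u_{j+1,m}-u_{j,m})L_m^+$, and a Taylor expansion (the difference integrates one derivative over one cell against $L_m$) gives $|u_{j+1,m}-u_{j,m}|\le Ch^{k+2}$ with the $m$-decay needed for summability; likewise $\|\vec e_{j+1}-\vec e_j\|\le C_1h^{k+2}$ is exactly \eqref{eqn:pstcoefdiff}. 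Tracking the $h$-powers in $G,H,B$ against those in $(A+B)^{-1}$ — all controlled by Lemma \ref{lem:mjm} — I expect these to combine so that the right-hand side, and hence $\vec e_j-\vec d_j$, is $O(h^{k+2})$, that is $|\acute u_{j,m}-\grave u_{j,m}|\le C_1h^{k+2}$ for $m=k-1,k$. Then $(\pst u-\pdag u)|_{I_j}=\sum_{m=k-1}^{k}(\acute u_{j,m}-\grave u_{j,m})L_{j,m}$, and summing over the $N=O(h^{-1})$ cells with $\|L_{j,m}\|_{L^\infty(I_j)}=1$ and $\|L_{j,m}\|_{L^2(I_j)}\le Ch^{1/2}$ gives \eqref{eqn:projdiff} for both $\nu=2$ and $\nu=\infty$.

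The main obstacle is entirely in \eqref{eqn:projdiff}: since $u-\pst u$ and $u-\pdag u$ are each only $O(h^{k+1})$, the extra power of $h$ cannot come from the triangle inequality. It has to be extracted from the algebraic step that rewrites the global block-circulant relation for $\vec e_j$ as the local relation perturbed by $B(\vec e_{j+1}-\vec e_j)$, together with the two uniform-mesh smoothness estimates $|u_{j+1,m}-u_{j,m}|\le Ch^{k+2}$ and \eqref{eqn:pstcoefdiff} — precisely what make the perturbation and the right-hand side mismatch one order smaller than generic — and one must then keep careful track of the mesh-size scalings of $G,H$ against those of $(A+B)^{-1}$ so that no power of $h$ is lost. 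All of this relies essentially on $h_j\equiv h$, which is why \eqref{eqn:projdiff} is claimed only under A2/A3.
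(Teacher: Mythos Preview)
Your treatment of the first two assertions matches the paper's: under A1 you correctly invoke \eqref{eqn:psteq2} from Lemma~\ref{lem:pstest} to identify $\pst$ with $\pdag$, and for existence/approximation under A2/A3 you write down the same local $2\times2$ system as the paper (your displayed equation is exactly \eqref{eqn:graveu}), identify $\det(A+B)=2((-1)^k\g+\la)$, and bound the right-hand side via Lemma~\ref{lem:mjm} and \eqref{eqn:ltcoef}.

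For \eqref{eqn:projdiff} your route is correct but genuinely different from the paper's. The paper writes the coupled system $A(\vec e_j-\vec d_j)+B(\vec e_{j+1}-\vec d_{j+1})=[\tau_j,\iota_j]^T$, inverts the full block-circulant matrix to get $\mathcal S_j=\sum_{l}r_l A^{-1}[\tau_{j+l},\iota_{j+l}]^T$ with $[\tau_j,\iota_j]^T$ proportional to $\sum_{m\ge k+1}(u_{j,m}-u_{j+1,m})GR_m^-$, and then estimates this sum by repeating the Fourier machinery (the operators $\sumla_\lambda$, $\sumnj$) exactly as in the proof of \eqref{eqn:pstcoefdiff}. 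You instead subtract the two defining relations to obtain the single \emph{local} identity $(A+B)(\vec e_j-\vec d_j)=H(\vec\sigma_{j+1}-\vec\sigma_j)-B(\vec e_{j+1}-\vec e_j)$ and then quote \eqref{eqn:ujmdiffest} for the first term and the already-proved \eqref{eqn:pstcoefdiff} for the second, with Lemma~\ref{lem:mjm} supplying the bounds on $(A+B)^{-1}H\,L_m^+$ and $(A+B)^{-1}B=(A+B)^{-1}H[L_{k-1}^+,L_k^+]$. This is more economical: you treat \eqref{eqn:pstcoefdiff} as a black box rather than rerunning the circulant/Fourier analysis, and under A3 all the delicate work with $\sumla_\lambda$, $\sumnj$ is hidden inside that citation. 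The paper's approach is more self-contained but redundant with Appendix~\ref{apdx:pstproof}; yours is shorter and gives the same constant $C_1$ since that is precisely what \eqref{eqn:pstcoefdiff} delivers.
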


\begin{proof}
	When assumption A1 is satisfied, due to \eqref{eqn:psteq2}, $\pst = \pdag$. The rest of the proof is given in Appendix \ref{apdx:pdagproof}.
\end{proof}

To analyze the superconvergence property at special points, we need 
 to investigate the expansion of the projection error of $\pdag$ on every cell $I_j$
\beq
\label{eqn:updagdiff}
\begin{split}
(u -\pdag u)|_{I_j}  & = [L_{j,k-1}, L_{j,k} ] \bmat u_{j,k-1} - \grave u_{j,k-1} \\ u_{j,k} - \grave u_{j,k} \emat  + \sum_{m = k+1}^\infty u_{j,m} L_{j,m} = \sum_{m=k+1}^\infty u_{j,m} R_{j,m},	\quad \text{if } A1/A2/A3,
\end{split}
\eeq
where $u_{j,m}$ is defined in \eqref{eqn:ltcoefcompute} and 
\beq
\label{eqn:rjm}
R_{j,m} = L_{j,m} - [L_{j,k-1}, L_{j,k}]\mathcal{M}_{m}.
\eeq 
We write out the explicit expression of the leading term in expansions
\beq
\label{eqn:rjk1}
R_{j,k+1}=L_{j,k+1} + b L_{j,k} + c L_{j,k-1},
\eeq
 where
\begin{align*}
b & = - \frac{2\ao \frac{2k+1}{h_j}}{ \g_j + (-1)^k \la_j}, \\
c & = -\frac{\bo - \frac{2(k+1)^2}{h_j}(\ao^2 + \bo\bt + \frac{1}{4}) - (-1)^{k+1} \frac{2(k+1)}{h_j}(\ao^2 + \bo\bt - \frac{1}{4})  + \frac{\bt}{{h_j}^2} k(k+2)(k+1)^2}{ \g_j + (-1)^k \la_j}
\end{align*}
to determine the location of superconvergent points.

%
%
 
For $s = 0, 1, 2$, denote $D^s_j$ as the roots of $\frac{d^s}{dx^s} R_{j,k+1},$ $D^s = \bigcup_{j=1}^N D_{j}^s$, then it follows from \eqref{eqn:updagdiff} and \eqref{eqn:ltcoef} that, for $x \in D^s_j$, 
\beq
\label{eqn:dsj}
\begin{split}
\partial^s(u - \pdag u)(x) &= \sum_{m=k+2}^\infty u_{j,m} \frac{d^s}{dx^s} R_{j,m} \leq Ch^{k+\frac{3}{2} - s} | u|_{W^{k+2,s}(I_j)}, \quad \text{if } A1/A2/A3,
\end{split}
\eeq
indicating superconvergence at those points. For details, please see results in Theorem \ref{thm:uhprojdiff}.

  Since the expression of $b, c$ depends on $h_j$,  on nonuniform mesh, $D^s_j, s = 0, 1, 2$ have nodes with the different relative locations on each cell. For simplicity, below we discuss the locations of $D^0_j, D^1_j, D^2_j$ for special flux choices on uniform mesh. 
\begin{itemize}
\item Alternating fluxes: $b = \pm \frac{2k+1}{k}, c= -\frac{(k+1)^2}{k}$.
\item Central flux: if $k$ is even, then $ b= 0, c = -\frac{(k+1)(k+2)}{k(k-1)}$; if $k$ is odd, then $b = 0, c = -1$.
\item IPDG fluxes: if $k$ is even, then $b = 0, c = -\frac{(k+1)(k+2) - 2 \tilde \bo}{k(k-1) - 2 \tilde \bo}$; if $k$ is odd, then $b= 0, c = -1$.
\end{itemize}

For central and IPDG fluxes, if $k$ is odd, $R_{j,k+1} = L_{j,k+1} - L_{j,k-1}, \frac{d}{dx}R_{j,k+1} = \frac{4k+2}{h_j}L_{j,k}, \frac{d^2}{dx^2}R_{j,k+1} = \frac{8k+4}{h_j^2}L'_{j,k}, $ implying that $D^0_j, D^1_j, D^2_j$ are Lobatto points of order $k+1$, Gauss points of order $k$ and Lobatto points of order $k+1$ excluding end  points, respectively, on interval $I_j.$  Therefore, $card(D^0_j) = k+1, card(D^1_j) = k, card(D^2_j) = k-1$.

Cao \emph{et al.} proved there exists $k+1$ superconvergence points (Radau points) when using upwind flux for linear hyperbolic problem in \cite{cao2014superconvergence}, $k+1$ superconvergence points (Lobatto points) using special flux parameter in DDG method in \cite{cao2017superconvergenceddg} and $k+1$ or $k$ superconvergence points, depending on parameters, for using upwind-biased flux for linear hyperbolic problem in \cite{cao2017superconvergence}.
Analyzing the number and location of superconvergent points for our scheme is more challenging. We shall only provide lower bound estimates for the number of superconvergence points.
For general parameters choices, when $k \geq 2$, $R_{j,k+1} \perp P_c^{k-2}(I_j)$, by Theorem  3.3 and Corollary 3.4 in \cite{shen2011spectral}, we can easily show $R_{j,k+1}$ has at least $k-1$ simple zeros, i.e., $card(D^0_j) \geq k-1$. By the same approach, we can show when $k \geq 3$, $card(D^1_j) \geq k-2,$  and when $k \geq 4$, $card(D^2_j) \geq k-3$. For small $k$ values, $D^1_j, D^2_j$ can possibly be empty sets.

\section{Superconvergence Properties}
\label{sec:suplinear}
In this section, we study  superconvergence of the numerical solution. First, we  investigate the superconvergence of UWDG fluxes, cell averages, towards a particular projection and at some special points. This 
analysis is done by decomposing the error into
\begin{equation}
\label{eqn:decompose}
e = u - u_h = \epsilon_h + \zeta_h, \ \epsilon_h = u - u_I, \  \zeta_h = u_I - u_h
\end{equation} for some $u_I \in V_h^k$. For error analysis of DG schemes, $u_I$ is usually taken as some projection of $u.$ While for our purpose of superconvergence analysis, $u_I$ needs to be carefully designed as illustrated in Section \ref{sec:correction}.   First, without specifying $u_I,$ we prove some intermediate superconvergence results in Section \ref{sec:int}. Then, the choice of $u_I$ is made in Section \ref{sec:correction} and the main results are obtained.
The other superconvergence property is about the negative norm of the UWDG solution, which enables a post-processing technique to obtain highly accurate solutions. This is considered in Section \ref{sec:postp}.

\subsection{Some intermediate superconvergence results}  
\label{sec:int}

This subsection will collect superconvergence results of $\| (\zeta_h)_{xx} \|_{L^2(I)}, (\frac{1}{N} \sumj |[{\zeta_h}]|^2_{\jp})^{\ot},$ $(\frac{1}{N} \sumj |[(\zeta_h)_x]|_{\jp}^2)^{\ot}$ without specifying $u_I.$ The main idea is to choose special test functions in error equation, similar to the techniques used in \cite{cheng2010superconvergence} for hyperbolic problems. This is an essential step to elevate the superconvergence order in Theorem \ref{lem:zetaest} when $k$ is even.   


\begin{Lem} 
\label{lem:zetaxx}
For $ k \geq 2$, let $u$ be the exact solution to \eqref{eqn:ls} and $u_h$ be the DG solution in \eqref{eqn:scheme}. $\epsilon_h, \zeta_h$ are defined in \eqref{eqn:decompose}. We define $s_h$ to be a unique function in $V_h^k,$ such that $\int_I s_h v_h dx = a(\epsilon_h, v_h), \, \forall v_h \in V_h^k$.  Then, when any of the assumptions A1/A2 is satisfied, we have
\begin{align}
\| (\zeta_h)_{xx} \|  & \leq C\|s_h + (\zeta_h)_t\| ,			\label{eqn:zetaxx} \\
(\frac{1}{N} \sumj |[\zeta_h]|_{\jp}^2)^{\ot}  	&\leq Ch^2 \|s_h + (\zeta_h)_t\|,	\label{eqn:zetajump} \\
(\frac{1}{N} \sumj |[(\zeta_h)_x]|_{\jp}^2)^{\ot}  	& \leq Ch \|s_h + (\zeta_h)_t\|.	\label{eqn:zetaxjump}
\end{align}
\end{Lem}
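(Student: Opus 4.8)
The plan is to derive an error equation for $\zeta_h$, then test it against carefully chosen polynomials on each cell so that the boundary terms from the ultra-weak formulation reproduce the quantities $\|(\zeta_h)_{xx}\|$, $[\zeta_h]$ and $[(\zeta_h)_x]$. First I would record that, since $u$ solves \eqref{eqn:ls}, $a(u, v_h) = \int_I (s_h \text{-type residual}) v_h\,dx$ is not quite right — rather, by definition of $s_h$ and the consistency of the scheme ($a(u_h, v_h)=0$, and $iu_t + u_{xx}=0$ giving $a(u,v_h)=0$ as well after integration by parts using smoothness of $u$), we get the error equation
\[
\int_I s_h v_h \, dx = a(\epsilon_h, v_h) = a(u - u_I, v_h) = a(u_h - u_I, v_h) = -a(\zeta_h, v_h) \quad\text{wait}
\]
— more carefully: $a(\epsilon_h,v_h) + a(\zeta_h,v_h) = a(e,v_h) = a(u,v_h) - a(u_h,v_h) = 0$, so $a(\zeta_h, v_h) = -\int_I s_h v_h\,dx$ for all $v_h\in V_h^k$. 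Writing out $a(\zeta_h,v_h) = \int_I (\zeta_h)_t v_h\,dx - iA(\zeta_h,v_h)$, this becomes $iA(\zeta_h, v_h) = \int_I ((\zeta_h)_t + s_h) v_h\,dx$. This is the identity I will exploit: the right side is controlled by $\|s_h + (\zeta_h)_t\|\,\|v_h\|$, while the left side, after integration by parts in $A$, contains $-\int_I (\zeta_h)_x (v_h)_x\,dx$ plus interface terms involving $[\zeta_h]$ and $[(\zeta_h)_x]$ against the flux-weighted jumps/averages of $v_h$.

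Next I would choose, on each cell $I_j$, the test function $v_h$ to be a polynomial built from $(\zeta_h)_{xx}|_{I_j}$ together with correction terms of degree $k-1$ and $k$ whose interface values are tuned (using the flux matrices $G,H$ and the nonsingularity guaranteed under A1/A2) so that $A(\zeta_h,v_h)$ isolates $\|(\zeta_h)_{xx}\|_{L^2(I_j)}^2$. Concretely, I expect to take $v_h$ so that $(v_h)_{xx}$ matches $\overline{(\zeta_h)_{xx}}$ up to lower-order Legendre modes, and so that the boundary contributions $\hat\zeta_h[(v_h)_x] - \widetilde{(\zeta_h)_x}[v_h]$ at each node either vanish or are absorbed. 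Using the inverse inequalities \eqref{eqn:inveq} to pass between $\|v_h\|$, $\|(v_h)_x\|$ and $\|(v_h)_{xx}\|$ (losing powers of $h$ in a controlled way), and summing over $j\in\zn$, I obtain $\|(\zeta_h)_{xx}\|^2 \le C\|s_h+(\zeta_h)_t\|\,\|(\zeta_h)_{xx}\|$, which gives \eqref{eqn:zetaxx}. For \eqref{eqn:zetajump} and \eqref{eqn:zetaxjump} I would instead select $v_h$ supported (essentially) near a single interface, designed so that $A(\zeta_h,v_h)$ picks out $\overline{[\zeta_h]_\jp}$ (respectively $\overline{[(\zeta_h)_x]_\jp}$) times a normalization; the extra factors $h^2$ and $h$ in the two estimates come from the scaling $\|v_h\| \sim h^{1/2}\cdot(\text{interface value})$ combined with the inverse/trace inequalities and the $h$-dependence built into the scale-invariant flux parameters ($\beta_1 = \tilde\beta_1 h^{-1}$, $\beta_2=\tilde\beta_2 h$).

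The main obstacle I anticipate is the construction of these special test functions so that the interface terms collapse exactly as desired: the ultra-weak bilinear form $A$ couples both $\zeta_h$ and $(\zeta_h)_x$ at \emph{every} node through the full $2\times 2$ matrices $G,H$, so isolating a single jump quantity requires solving a small linear system at each interface and verifying its invertibility uniformly in $h$ — this is precisely where assumptions A1/A2 (i.e.\ $\alpha_1^2+\beta_1\beta_2 = \tfrac14$ giving the local structure, or the uniform-mesh global structure with the $A_j+B_j$ matrices of Lemma \ref{lem:mjm}) enter, and why A3 is excluded here. A secondary technical point is bookkeeping the powers of $h$: because $A$ contains the term $-\int_I (\zeta_h)_x(v_h)_x$, one must be careful that this does not contaminate the leading balance, which I expect to handle by choosing $v_h$ with $(v_h)_x$ orthogonal to $(\zeta_h)_x$ up to a harmless remainder, or by absorbing it via Cauchy–Schwarz and the inverse inequality after the main term is established.
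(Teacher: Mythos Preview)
Your overall strategy is right and matches the paper's, but there is a concrete gap in the construction that would block the argument. You try to work with $A(\zeta_h,v_h)$ in the form whose boundary terms involve $\hat\zeta_h$ and $\widetilde{(\zeta_h)_x}$ --- quantities you cannot prescribe --- and to set $(v_h)_{xx}=\overline{(\zeta_h)_{xx}}$; neither the volume term $\int_I \zeta_h(v_h)_{xx}\,dx$ nor the integrated-by-parts term $-\int_I(\zeta_h)_x(v_h)_x\,dx$ then produces $\|(\zeta_h)_{xx}\|^2$. The key trick you are missing is the \emph{symmetry} of $A$: the paper writes
\[
A(\zeta_h,v_h)=A(v_h,\zeta_h)=\int_I v_h(\zeta_h)_{xx}\,dx+\sum_j\Big(\hat v_h[(\zeta_h)_x]-\widetilde{(v_h)_x}[\zeta_h]\Big)\Big|_{j+\frac12}.
\]
Now the boundary terms carry the fluxes $\hat v_h,\widetilde{(v_h)_x}$ of the constructed test function, and the volume integrand is $v_h(\zeta_h)_{xx}$. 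The correct choice is $v_1|_{I_j}=\overline{(\zeta_h)_{xx}}+\alpha_{j,k-1}L_{j,k-1}+\alpha_{j,k}L_{j,k}$ (so $v_1$ itself, not $(v_1)_{xx}$, contains $\overline{(\zeta_h)_{xx}}$), with the two free coefficients fixed by $\hat v_1=0$, $\widetilde{(v_1)_x}=0$ at every interface. Since $(\zeta_h)_{xx}\in P^{k-2}$ on each cell, orthogonality gives $A(v_1,\zeta_h)=\|(\zeta_h)_{xx}\|^2$ exactly; no intermediate term $-\int(\zeta_h)_x(v_h)_x$ ever appears, so your secondary concern is moot. The $2\times2$ system for $(\alpha_{j,k-1},\alpha_{j,k})$ is local under A1 (solved via $(A_j+B_j)^{-1}$ and Lemma~\ref{lem:mjm}) and under A2 is handled globally via block-circulant diagonalization, yielding $\|v_1\|\le C\|(\zeta_h)_{xx}\|$; under A3 the spectral bound fails, which is why A3 is excluded.

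For \eqref{eqn:zetajump}--\eqref{eqn:zetaxjump} the paper does not localize near one interface. It takes $v_2|_{I_j}=\alpha_{j,k-1}L_{j,k-1}+\alpha_{j,k}L_{j,k}$ (no $\overline{(\zeta_h)_{xx}}$ part, so $\int v_2(\zeta_h)_{xx}=0$ by orthogonality) with $\hat v_2=0$, $\widetilde{(v_2)_x}=\overline{[\zeta_h]}$ at each node, giving $A(v_2,\zeta_h)=-\sum_j|[\zeta_h]|^2_{j+\frac12}$; the resulting bound $\|v_2\|^2\le Ch^3\sum_j|[\zeta_h]|^2_{j+\frac12}$ together with $1/N\sim h$ delivers the factor $h^2$. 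The function $v_3$ is built the same way with $\hat v_3=\overline{[(\zeta_h)_x]}$, $\widetilde{(v_3)_x}=0$.
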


\begin{proof}
The proof is given in Appendix \ref{apdx:zetaxxproof}.
\end{proof}


\subsection{ Correction functions and the main results}
\label{sec:correction}
In this section, we shall present the main superconvergence results. The proof depends on Lemma \ref{lem:zetaxx} and the correction function technique introduced by Cao \emph{et al.}  in \cite{cao2014superconvergence, cao2017superconvergenceddg}, which is essential for superconvergence. We let $u_I=\pst u$ when $k=2,$ and $u_I = \pst u - w,$ when $k \ge 3,$ where  $w \in V_h^k$ is a specially designed correction function defined below.

Similar to \cite{cao2017superconvergenceddg}, we start the construction by defining $w_q, 1 \leq q \leq \floor{\frac{k-1}{2}}$. For $k \geq 3$, we denote $w_0 = u - \pst u$ and define a series of functions $w_q \in V_h^k,$ as follows
\begin{subequations}
\label{eqn:wqdef}
\begin{align}
\label{eqn:wq1}
\intj w_{q} (v_h)_{xx} dx & = -i \intj (w_{q-1})_t v_h dx,		& \forall v_h \in P_c^{k}(I_j) \setminus P_c^1(I_j),	\\
\label{eqn:wq2}
\widehat{w_q} & = 0,							& \text{at } x_{j+\ot},	\\
\label{eqn:wq3}
\widetilde{(w_q)_x} & = 0,						& \text{at } x_{j+\ot},	
\end{align}
\end{subequations}
for all $j \in \mathbb{Z}_N$. \eqref{eqn:wq2} and \eqref{eqn:wq3} is equivalent to
\begin{align}
\label{eqn:wqvec}
& G \begin{bmatrix}
w_q 	\\
(w_q)_x
\end{bmatrix}
\bigg \rvert_{x_{j+\ot}}^-
+
H \begin{bmatrix}
w_q 	\\
(w_q)_x
\end{bmatrix}
\bigg \rvert_{x_{j+\ot}}^+
=
0.
\end{align}
$w_q$ exists and is unique when any of the assumptions A1/A2/A3 is satisfied for the same reason as the existence and uniqueness of $\pst$. 

With the construction of $w_q$, we define 
\beq
\label{eqn:wdef}
w(x,t) = \sum_{q=1}^{\floor{\frac{k-1}{2}}} w_q(x,t),
\eeq
then
\beq
\begin{aligned}
\label{eqn:aui}
a_j(\epsilon_h, v_h) 	& = a_j(u-\pst u, v_h) + \sum_{q=1}^{\floor{\frac{k-1}{2}}} a_j(w_q, v_h)	\\
			& = \intj (w_0)_t v_h dx + \sum_{q=1}^{\floor{\frac{k-1}{2}}}  \left ( \intj (w_q)_t v_h dx - i \intj w_q (v_h)_{xx} dx \right )\\
			&=  \intj (w_0)_t v_h dx + \sum_{q=1}^{\floor{\frac{k-1}{2}}}  \intj (w_q-w_{q-1})_t v_h dx \\
			& = \intj (w_{\floor{\frac{k-1}{2}}})_t v_h dx,   \quad \forall v_h \in V_h^k(I).
\end{aligned}
\eeq

The approximation property of $w_q$ and $a_j(\epsilon_h, v_h)$ are presented in the following Lemma.

\begin{Lem}
\label{lem:west}
For $k \geq 3$, suppose $u$ satisfies the condition in Theorem \ref{thm:l2converge}. For $w_q, 1 \leq q \leq \floor{\frac{k-1}{2}}$, $q+r \leq  \floor{\frac{k-1}{2}} +1,$ we have
\beq
\label{eqn:wqexp}
\begin{aligned}
\partial_t^r w_q|_{I_j} = \sum_{m = k - 1 - 2q}^k \partial_t^r c_{j,m}^q L_{j,m}, 
\quad \partial_t^r c_{j, k-1-2q}^q = C h_j^{2q} \partial_t^{q+r} (u_{j,k-1} - \acute u_{j,k-1}), \\
\quad \abs{\partial_t^r c_{j, m}^q} \leq C_{2r,q} h^{k+1+2q},
\end{aligned}
\eeq
and then
\beq
\label{eqn:west2}
\| \partial_t^r w_q \|\leq C_{2r,q}h^{k+1+2q}. 
\eeq
For any $v_h \in V_h^k$,
\beq
\label{eqn:wqerreq}
\abs{a(\epsilon_h, v_h)} \leq C_{2,\floor{\frac{k-1}{2}}} h^{k+1+2\floor{\frac{k-1}{2}}}\|v_h\|.
\eeq
\end{Lem}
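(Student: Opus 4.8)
The plan is to prove the three claims of Lemma~\ref{lem:west} in order, leveraging the local definitions \eqref{eqn:wqdef} of the correction functions $w_q$ and the estimates for $\pst$ from Lemma~\ref{lem:pstest}. First I would establish the expansion \eqref{eqn:wqexp} by induction on $q$. For the base case, recall $w_0 = u - \pst u$, whose Legendre coefficients on $I_j$ vanish for $m \le k-2$ by \eqref{eqn:psts1}, so $w_0|_{I_j} = (u_{j,k-1}-\acute u_{j,k-1})L_{j,k-1} + (u_{j,k}-\acute u_{j,k})L_{j,k} + \sum_{m\ge k+1} u_{j,m}L_{j,m}$; applying $D^{-1}$-type identities \eqref{eqn:d-1}--\eqref{eqn:d-2} and \eqref{eqn:pstest} controls the tail, while the leading coefficient is $u_{j,k-1}-\acute u_{j,k-1}$ which matches \eqref{eqn:wqexp} with $q=0$. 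For the inductive step, given $w_{q-1}|_{I_j}$ supported on modes $k-1-2(q-1)$ through $k$, equation \eqref{eqn:wq1} determines the modes $m$ with $2 \le m \le k$ of $w_q$: integrating by parts twice and using \eqref{eqn:d-2}, the coefficient $c_{j,m}^q$ is obtained from $\partial_t c_{j,m'}^{q-1}$ for $m' = m-2, m, m+2$, which shifts the lowest occupied mode down by two to $k-1-2q$ and shows $c_{j,k-1-2q}^q = C h_j^2 \partial_t c_{j,k-1-2(q-1)}^{q-1} = \cdots = C h_j^{2q}\partial_t^q(u_{j,k-1}-\acute u_{j,k-1})$; the two highest modes $m=k-1,k$ are then fixed by the flux conditions \eqref{eqn:wqvec}, i.e.\ by multiplying the already-known modes against $(A_j+B_j)^{-1}(GL^-_{j,m}+HL^+_{j,m}) = \mathcal M_{j,m}$ and invoking the uniform bound in Lemma~\ref{lem:mjm}. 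The size bound $|\partial_t^r c_{j,m}^q| \le C_{2r,q}h^{k+1+2q}$ follows by combining the mode-shifting with \eqref{eqn:ltcoef} applied to the tail coefficients $u_{j,m}$ (losing two derivatives per level of $q$, which is exactly why the constant $C_{2r,q}$ carries $k+1+m+2n$-type Sobolev indices) and using \eqref{eqn:pstest} for the leading term; $\partial_t$ commutes with everything since the construction is linear in $u$ and its time derivatives, and $u_t = iu_{xx}$ is smooth.

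Second, \eqref{eqn:west2} is then immediate: $\|\partial_t^r w_q\|^2 = \sum_j \|\partial_t^r w_q\|_{L^2(I_j)}^2 = \sum_j \sum_{m} |\partial_t^r c_{j,m}^q|^2 \frac{h_j}{2m+1} \le C_{2r,q}^2 h^{2(k+1+2q)} \cdot \sum_j h_j \le C_{2r,q}^2 h^{2(k+1+2q)} (b-a)$, using orthogonality of Legendre polynomials and the bound just proved; take square roots.

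Third, for \eqref{eqn:wqerreq} I would use the telescoping identity \eqref{eqn:aui}, which shows $a(\epsilon_h, v_h) = \int_I (w_{\floor{(k-1)/2}})_t \, v_h \, dx$ for all $v_h \in V_h^k$; here the key input is that $w_q$ was \emph{designed} so that $a_j(w_q,v_h) = \int_{I_j}(w_q)_t v_h\,dx - i\int_{I_j} w_q (v_h)_{xx}\,dx$ telescopes against $\int_{I_j}(w_{q-1})_t v_h\,dx$ once \eqref{eqn:wq1} is extended from $P_c^k \setminus P_c^1$ to all of $P_c^k$ (the $P_c^1$ part contributes nothing because $(v_h)_{xx}=0$ there, and the boundary flux terms vanish by \eqref{eqn:wq2}--\eqref{eqn:wq3}). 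Then by Cauchy--Schwarz, $|a(\epsilon_h,v_h)| \le \|\partial_t w_{\floor{(k-1)/2}}\| \, \|v_h\| \le C_{2,\floor{(k-1)/2}} h^{k+1+2\floor{(k-1)/2}}\|v_h\|$, applying \eqref{eqn:west2} with $r=1$ and $q = \floor{(k-1)/2}$, which satisfies $q+r = \floor{(k-1)/2}+1$ as required.

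The main obstacle is the bookkeeping in the inductive step for \eqref{eqn:wqexp}: one must verify that the double integration by parts in \eqref{eqn:wq1} really does produce only a two-mode downward shift (so the support stays within $[k-1-2q, k]$ and in particular never reaches mode $0$ or $1$, which would conflict with the restricted test space), that the flux-correction modes $k-1,k$ do not re-pollute lower modes, and that the derivative count in $C_{2r,q}$ is tight — each application of \eqref{eqn:wq1} trades a factor $h_j^2$ for a time derivative via $u_t = iu_{xx}$, and each flux solve is harmless by Lemma~\ref{lem:mjm}. One should also confirm that $\floor{(k-1)/2}$ is the right truncation, i.e.\ that mode $k-1-2\floor{(k-1)/2} \in \{0,1\}$ so the recursion \eqref{eqn:wq1} terminates consistently with the test-function restriction $P_c^k \setminus P_c^1$; this is exactly why the correction series stops at $q = \floor{(k-1)/2}$ rather than continuing further.
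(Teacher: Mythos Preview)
Your overall architecture is right---induction on $q$ via \eqref{eqn:wq1} for the low modes, flux conditions \eqref{eqn:wqvec} for modes $k-1,k$, then orthogonality for \eqref{eqn:west2} and the telescoping identity \eqref{eqn:aui} for \eqref{eqn:wqerreq}---and this matches the paper. But there is a genuine gap in your treatment of the two highest modes. You write that $c_{j,k-1}^q,\,c_{j,k}^q$ are obtained by multiplying the known low-mode contributions against $(A_j+B_j)^{-1}(GL_{j,m}^-+HL_{j,m}^+)=\mathcal M_{j,m}$ and invoking Lemma~\ref{lem:mjm}. That local solve is only available under assumption~A1, where \eqref{eqn:wqvec} decouples cell-by-cell (cf.\ \eqref{eqn:psteq2}). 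Under A2/A3 the flux condition \eqref{eqn:wqvec} couples cell $I_j$ to cell $I_{j+1}$, so the system for the top two modes is the global block-circulant system with matrix $M$ from \eqref{eqn:mdef}, and its solution brings in all cells through the weights $r_l$ of \eqref{eqn:rj}.

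Under A2 this is not serious: one has $\sum_l\|r_l\|_\infty\le C$ (see \eqref{eqn:minv1}), and your $L^\infty$ bound on the low-mode coefficients then propagates. Under A3, however, $\sum_l\|r_l\|_\infty$ is of order $h^{-2}$ (see \eqref{eqn:minv2}), so the naive bound destroys the estimate. The paper closes this gap by expanding each $c_{j+l,m}^q$ to sufficiently high order in $h$ via \eqref{eqn:ujmtaylor}, recognizing the $l$-sum as one of the convolution-like operators $\boxtimes_\lambda$ or $\boxplus$ of \eqref{eqn:sumladef1}--\eqref{eqn:sumladef2} acting on a smooth derivative of $u$, and then invoking the Fourier-multiplier bounds \eqref{eqn:sumlaest}. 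This is also why your derivative count is off: under A3 each level of $q$ costs three or four extra derivatives (encoded in the definition of $C_{m,n}$), not two, precisely because each global solve applies one more power of these operators. Without this Fourier argument the lemma is unproved in the A3 case.
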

\begin{proof}
	The proof is given in Appendix \ref{apdx:westproof}.
\end{proof}

\begin{Lem}
\label{lem:zetaxx2}
For $k \geq 2$, suppose $u$ satisfies the condition in Theorem \ref{thm:l2converge}. If the parameters satisfy any of the assumptions A1/A2 and $u_h|_{t=0} = u_I|_{t=0}$, we have
\begin{align}
\label{eqn:uiuh1}
\| (\zeta_h)_{xx} \| & \leq C_{4+2\floor{\frac{k-1}{2}}} h^{k+1+2\floor{\frac{k-1}{2}}}.	\\
\label{eqn:uiuh2}
(\frac{1}{N} \sumj |[(\zeta_h)]|_{\jp}^2)^{\ot}  & \leq C_{4+2\floor{\frac{k-1}{2}}} h^{k+3+2\floor{\frac{k-1}{2}}}.	\\
\label{eqn:uiuh3}
(\frac{1}{N} \sumj |[(\zeta_h)_x]|_{\jp}^2)^{\ot}  	& \leq C_{4+2\floor{\frac{k-1}{2}}} h^{k+2+2\floor{\frac{k-1}{2}}}.
\end{align}
\end{Lem}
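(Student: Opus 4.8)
The plan is to reduce the three bounds to a single estimate on $\|s_h+(\zeta_h)_t\|$ and then invoke Lemma~\ref{lem:zetaxx}. Concretely, once we establish
\[
\| s_h + (\zeta_h)_t \| \le C_{4+2\floor{\frac{k-1}{2}}}\, h^{k+1+2\floor{\frac{k-1}{2}}},
\]
the estimates \eqref{eqn:uiuh1}--\eqref{eqn:uiuh3} follow immediately by substituting into \eqref{eqn:zetaxx}, \eqref{eqn:zetajump} and \eqref{eqn:zetaxjump}, respectively.

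First I would write down the error equation. For the smooth exact solution $u$, integration by parts on each cell together with the periodic boundary condition gives $A(u,v_h)=\int_I u_{xx}v_h\,dx$ (this is exactly the consistency computation behind the symmetry of $A(\cdot,\cdot)$), so that $a(u,v_h)=\int_I(u_t-iu_{xx})v_h\,dx=0$ for all $v_h\in V_h^k$ since $iu_t+u_{xx}=0$. Subtracting the scheme $a(u_h,v_h)=0$ and using the splitting \eqref{eqn:decompose} yields $a(\zeta_h,v_h)=-a(\epsilon_h,v_h)=-\int_I s_h v_h\,dx$, i.e.
\[
\int_I (\zeta_h)_t v_h\,dx = i A(\zeta_h,v_h) - \int_I s_h v_h\,dx,\qquad \forall\, v_h\in V_h^k .
\]
By \eqref{eqn:aui} (using the convention $w_0=u-\pst u$ and noting that $w\equiv0$, $u_I=\pst u$ when $k=2$), $a(\epsilon_h,v_h)=\int_I \partial_t w_{\floor{(k-1)/2}}\,v_h\,dx$, so $\int_I s_h v_h\,dx=\int_I \partial_t w_{\floor{(k-1)/2}}\,v_h\,dx$ and, differentiating in $t$, $\int_I (s_h)_t v_h\,dx=\int_I \partial_t^2 w_{\floor{(k-1)/2}}\,v_h\,dx$. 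Choosing $v_h=s_h$ and $v_h=(s_h)_t$ and invoking \eqref{eqn:west2} with $r=1,2$ when $k\ge3$ (the case $r=2$ costing two extra spatial derivatives of $u$, which accounts for the index $4+2\floor{(k-1)/2}$), or the projection bounds of Lemma~\ref{lem:pstest} when $k=2$, gives $\|s_h\|+\|(s_h)_t\|\le C_{4+2\floor{\frac{k-1}{2}}}\,h^{k+1+2\floor{\frac{k-1}{2}}}$.

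Next I would estimate $\|(\zeta_h)_t\|$ by a conservative energy argument on the time-differentiated error equation. Since $v_h$ is time-independent and $\pst,\,w_q$ commute with $\partial_t$, differentiation gives $\int_I (\zeta_h)_{tt}v_h\,dx = iA((\zeta_h)_t,v_h)-\int_I (s_h)_t v_h\,dx$. Setting $v_h=\overline{(\zeta_h)_t}$, adding the complex conjugate of the identity, and using $A((\zeta_h)_t,\overline{(\zeta_h)_t})\in\mathbb R$ (symmetry of $A$) to cancel the $A$-term --- precisely where the conservative structure of the \sch equation substitutes for a missing dissipation term --- we obtain $\frac{d}{dt}\|(\zeta_h)_t\|^2 = -2\,\mathrm{Re}\int_I (s_h)_t\,\overline{(\zeta_h)_t}\,dx \le 2\|(s_h)_t\|\,\|(\zeta_h)_t\|$, hence $\frac{d}{dt}\|(\zeta_h)_t\|\le\|(s_h)_t\|$. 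For the initial value, $u_h|_{t=0}=u_I|_{t=0}$ forces $\zeta_h(0)=0$, so the error equation at $t=0$ reads $\int_I (\zeta_h)_t(0)\,v_h\,dx=-\int_I s_h(0)\,v_h\,dx$, and $v_h=(\zeta_h)_t(0)$ gives $\|(\zeta_h)_t(0)\|\le\|s_h(0)\|$. Integrating over $[0,T_e]$ yields $\|(\zeta_h)_t(t)\|\le\|s_h(0)\|+\int_0^t\|(s_h)_t\|\,ds\le C_{4+2\floor{\frac{k-1}{2}}}\,h^{k+1+2\floor{\frac{k-1}{2}}}$; combining with the bound on $\|s_h\|$ gives the desired estimate for $\|s_h+(\zeta_h)_t\|$, and Lemma~\ref{lem:zetaxx} finishes the proof.

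The main obstacle is the estimate of $\|(\zeta_h)_t\|$. Without dissipation, testing the error equation directly with $v_h=(\zeta_h)_t$ only bounds $\|(\zeta_h)_t\|$ in terms of $h^{-2}\|\zeta_h\|$ (the $h^{-2}$ coming from inverse inequalities applied to $A(\zeta_h,\cdot)$), which loses the two powers of $h$ required by \eqref{eqn:uiuh1}. The remedy is to run the energy estimate on the time-differentiated equation, which obliges one to carry the correction function $w$ through two time derivatives and therefore to control $(s_h)_t$ via Lemma~\ref{lem:west} with $r=2$; this is also what makes the matching-initialization hypothesis $u_h(0)=u_I(0)$ essential, since it is $\zeta_h(0)=0$ that provides the bound on $(\zeta_h)_t(0)$.
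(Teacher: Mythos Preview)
Your proof is correct and follows essentially the same route as the paper: identify $s_h$ with $\partial_t w_{\floor{(k-1)/2}}$ (which is $(\epsilon_h)_t$ when $k=2$), differentiate the error equation in time, exploit $A((\zeta_h)_t,\overline{(\zeta_h)_t})\in\mathbb{R}$ to run a conservative energy estimate for $\|(\zeta_h)_t\|$, use $\zeta_h(0)=0$ to control $\|(\zeta_h)_t(0)\|$, and then feed $\|s_h+(\zeta_h)_t\|$ into Lemma~\ref{lem:zetaxx}. The paper's argument is the same, only phrased without the intermediary $s_h$, working directly with $(\epsilon_h)_{tt}$ or $(w_{\floor{(k-1)/2}})_{tt}$.
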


\begin{proof}
When $k = 2$,   $w=0.$ $a(\epsilon_h, v_h)  = \int_I (\epsilon_h)_t v_h dx$  from the definition of $\pst$. That is, $s_h = (\epsilon_h)_t $ in the condition of Lemma \ref{lem:zetaxx}.  To bound $\|(\zeta_h)_t\|,$ we take the time derivative of the error equation and obtain
\[
	a(e_t, v_h) =  a((\epsilon_h)_t, v_h) + a((\zeta_h)_t, v_h) = 0.
\]

Let $v_h = \overline{(\zeta_h)_t}$, by \eqref{eqn:l2stab} and the property of $\pst u$, we obtain
\[
	\frac{d}{dt} \| (\zeta_h)_t\|^2  = -a((\epsilon_h)_t, \overline{(\zeta_h)_t}) - \overline{a((\epsilon_h)_t, \overline{(\zeta_h)_t})} \leq 2 \| (\epsilon_h)_{tt} \| \| (\zeta_h)_t \|, 
\]
which implies $ \frac{d}{dt} \| (\zeta_h)_t\| \le  \| (\epsilon_h)_{tt}\|. $
To estimate $\|(\zeta_h)_t|_{t=0}\|$, we let $t=0$ in the error equation. Since $\zeta_h |_{t=0} = (u_h - u_I)|_{t=0}  = 0$, we have

\[
	a(\epsilon_h,v_h) + \int_I (\zeta_h)_t|_{t=0} v_h dx = 0.
\]

Let $v_h = \overline{(\zeta_h)_t}|_{t=0}$, then
$$
\|(\zeta_h)_t|_{t=0}\|^2 \leq \| (\epsilon_h)_{t} \| \|(\zeta_h)_t|_{t=0}\|.
$$
Therefore, 
$$
\|(\zeta_h)_t\| \leq \| (\epsilon_h)_{t} \|+t \| (\epsilon_h)_{tt}\|.
$$
By Lemma \ref{lem:zetaxx}, estimates in \eqref{eqn:pstest} and the inequality above, we can get \eqref{eqn:uiuh1}-\eqref{eqn:uiuh2}.

For $k \geq 3$, by \eqref{eqn:aui}, we have $a(\epsilon_h, v_h) = \int_I (w_{\floor{\frac{k-1}{2}}})_t v_h dx,$ that is, $s_h = (w_{\floor{\frac{k-1}{2}}})_t$ in the condition of Lemma \ref{lem:zetaxx}.    Then, following the same lines of proof as above, by replacing $\epsilon_h$ with $w_{\floor{\frac{k-1}{2}}}$ and using Lemma \ref{lem:west}, we are done.
%
%
%
%
\end{proof}


Now we are ready to state the following estimates of $\| \zeta_h \|$.  
\begin{Thm}
\label{lem:zetaest}
For $k \geq 2$, suppose $u$ satisfies the condition in Theorem \ref{thm:l2converge}. Assume $u_h|_{t=0} = u_I|_{t=0}$, then $\forall t \in (0, T_e]$,
\beq
\label{eqn:uiblue}
\| \zeta_h \| \leq \begin{cases} C_{2,{\frac{k-1}{2}}} h ^{2k} & \text{if k is odd and A1/A2/A3},		\\
( C_{k+2} h^{4k} + \sum_{I_j \subset {I^{NU}}}   C_k h^{4k-1} )^{\ot}  & \text{if k is even and A1/A2}, \\
C_{2,{\frac{k-2}{2}}} h ^{2k-1}, & \text{if k is even and A3},
 \end{cases}
 \eeq
where $I^{NU}$ is the collection of cells in which the length of $I_j$ is different with at least one of its neighbors.
\end{Thm}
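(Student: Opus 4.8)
The plan is to run the standard energy estimate for $\|\zeta_h\|$, but with $\epsilon_h$ replaced by the correction-enhanced quantity so that $a(\epsilon_h,v_h)$ is as small as possible, and then to squeeze out an extra power of $h$ in the borderline case by isolating the single lowest Legendre mode. Concretely, from $a(e,v_h)=0$ and $e=\epsilon_h+\zeta_h$ we get $a(\zeta_h,v_h)=-a(\epsilon_h,v_h)$; testing with $v_h=\overline{\zeta_h}$ and using $A(\zeta_h,\overline{\zeta_h})\in\mathbb{R}$ exactly as in \eqref{eqn:l2stab} gives $\frac{d}{dt}\|\zeta_h\|^2=-2\,\mathrm{Re}\,a(\epsilon_h,\overline{\zeta_h})$. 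By \eqref{eqn:aui} (and, when $k=2$, directly from Definition~\ref{def:pst} with $w_0:=u-\pst u$), $a(\epsilon_h,v_h)=\int_I(w_{\bar q})_t v_h\,dx$ with $\bar q:=\floor{\frac{k-1}{2}}$, so with $\zeta_h|_{t=0}=0$ everything comes down to bounding $\int_I(w_{\bar q})_t\overline{\zeta_h}\,dx$ and applying Gr\"onwall.

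For the two easy branches I would use only the crude bound \eqref{eqn:wqerreq} (for $k=2$, the estimate $\|(\epsilon_h)_t\|\le C h^{k+1}$): when $k$ is odd, $|a(\epsilon_h,\overline{\zeta_h})|\le C_{2,\frac{k-1}{2}}h^{2k}\|\zeta_h\|$, and when $k$ is even under A3, $|a(\epsilon_h,\overline{\zeta_h})|\le C_{2,\frac{k-2}{2}}h^{2k-1}\|\zeta_h\|$; a Young inequality plus Gr\"onwall with zero initial data then gives the first and third branches of \eqref{eqn:uiblue} at once.

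The real work is the case $k$ even under A1/A2, where $\bar q=\frac{k-2}{2}$ and the crude bound only delivers $h^{2k-1}$. By \eqref{eqn:wqexp} the lowest Legendre mode present in $w_{\bar q}$ is $L_{j,1}$, with coefficient $c^{\bar q}_{j,1}=Ch_j^{2\bar q}\partial_t^{\bar q}(u_{j,k-1}-\acute u_{j,k-1})$ (for $k=2$, read $w_0=u-\pst u$, whose $L_{j,1}$-coefficient is $u_{j,1}-\acute u_{j,1}$). I would split $\zeta_h=\zeta_h^{\mathrm{lo}}+\zeta_h^{\mathrm{hi}}$ into its degree $\le1$ and degree $\ge2$ parts on each cell. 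Since $(\zeta_h^{\mathrm{lo}})_{xx}=0$, a scaling/inverse inequality on the reference cell gives $\|\zeta_h^{\mathrm{hi}}\|\le Ch^2\|(\zeta_h)_{xx}\|\le C_{k+2}h^{2k+1}$ by \eqref{eqn:uiuh1}, so by orthogonality of Legendre polynomials and \eqref{eqn:west2} the high part contributes only $|\int_I(w_{\bar q})_t\overline{\zeta_h^{\mathrm{hi}}}\,dx|\le\|(w_{\bar q})_t\|\,\|\zeta_h^{\mathrm{hi}}\|\le Ch^{2k-1}\cdot C_{k+2}h^{2k+1}=C_{k+2}h^{4k}$. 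For the low part only the $L_{j,1}$-mode survives orthogonality, so $\int_I(w_{\bar q})_t\overline{\zeta_h^{\mathrm{lo}}}\,dx=\frac13\sum_j h_j g_j\overline{\hat\zeta_{j,1}}$ with $g_j:=\partial_t c^{\bar q}_{j,1}$; here $|g_j|\le Ch^{2k-1}$ (from \eqref{eqn:wqexp}; from \eqref{eqn:pstest} when $k=2$) and, crucially, $|g_j-g_{j+1}|\le Ch^{2k}$ whenever $h_j=h_{j+1}$, by \eqref{eqn:pstcoefdiff} applied after differentiating in $t$.

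The last step is to bound $\sum_j h_j g_j\overline{\hat\zeta_{j,1}}$. I would write $\hat\zeta_{j,1}=\tfrac12\big(\zeta_h^-(x_\jp)-\zeta_h^+(x_\jm)\big)-\sum_{m\ge3,\ m\ \mathrm{odd}}\hat\zeta_{j,m}$, absorb the odd-mode tail into $\|\zeta_h^{\mathrm{hi}}\|$ (an $O(h^{4k})$ contribution), and treat $\sum_j h_j g_j(\overline{a_j}-\overline{b_j})$, with $a_j:=\zeta_h^-(x_\jp)$ and $b_j:=\zeta_h^+(x_\jm)=a_{j-1}+[\zeta_h]_\jm$, by a discrete summation by parts over the periodic index $j$, obtaining $\sum_j(h_j g_j-h_{j+1}g_{j+1})\overline{a_j}-\sum_j h_j g_j\overline{[\zeta_h]_\jm}$. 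The jump term is $O(h^{4k})$ by \eqref{eqn:uiuh2} and $N\sim h^{-1}$; in the remaining sum $|h_j g_j-h_{j+1}g_{j+1}|\le Ch^{2k+1}$ on cells with $h_j=h_{j+1}$ and $\le Ch^{2k}$ otherwise (i.e.\ $I_j$ or $I_{j+1}$ in $I^{NU}$), so with $|a_j|\le\|\zeta_h\|_{L^\infty(I_j)}\le Ch^{-1/2}\|\zeta_h\|_{L^2(I_j)}$ and Cauchy--Schwarz I get a bound $Ch^{2k}\|\zeta_h\|+Ch^{2k-1/2}(\mathrm{card}\,I^{NU})^{1/2}\|\zeta_h\|$. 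Collecting, $|a(\epsilon_h,\overline{\zeta_h})|\le C_{k+2}h^{4k}+Ch^{2k}\|\zeta_h\|+C_k h^{2k-1/2}(\mathrm{card}\,I^{NU})^{1/2}\|\zeta_h\|$, and Young plus Gr\"onwall with $\zeta_h|_{t=0}=0$ yields the second branch, since $h^{4k-1}\,\mathrm{card}\,I^{NU}=\sum_{I_j\subset I^{NU}}h^{4k-1}$. The hard part will be precisely recognizing that the missing power of $h$ lives only in the $L_1$-mode and recovering it through this summation by parts: it is the near-constancy of $g_j$ across equal-length cells (\eqref{eqn:pstcoefdiff}) that provides the gain, the nonuniform cells are exactly where it fails and produce the $\sum_{I_j\subset I^{NU}}$ term, and the superconvergence of $\|(\zeta_h)_{xx}\|$ and of the interface jumps from Lemma~\ref{lem:zetaxx2} is what makes all the other remainders $O(h^{4k})$.
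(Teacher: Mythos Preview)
Your proposal is correct and follows essentially the same strategy as the paper: energy argument with $v_h=\overline{\zeta_h}$, the correction identity \eqref{eqn:aui}, the crude bound \eqref{eqn:wqerreq} for the odd-$k$ and even-$k$/A3 branches, and then the isolation of the $L_{j,1}$-mode together with Lemma~\ref{lem:zetaxx2} and the cell-to-cell smoothness \eqref{eqn:pstcoefdiff} for the even-$k$/A1-A2 branch.

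The only technical difference is in how the non-$L_{j,1}$ part is disposed of. The paper splits $(w_l)_t$ by Legendre degree and, for $m\ge 2$, integrates by parts twice against $D^{-2}L_{j,m}$ (whose endpoint values vanish) to land on $(\zeta_h)_{xx}$; for $m=1$ it integrates by parts twice with explicit antiderivatives of $\xi$, which produces exactly the boundary-value term $\frac{1}{3}(\bar\zeta_h^-|_{\jp}-\bar\zeta_h^+|_{\jm})$ you reach, plus a $(\zeta_h)_{xx}$ remainder. You instead split $\zeta_h$ into $\zeta_h^{\mathrm{lo}}+\zeta_h^{\mathrm{hi}}$ and use $\|\zeta_h^{\mathrm{hi}}\|\le Ch^2\|(\zeta_h)_{xx}\|$ directly, then express $\hat\zeta_{j,1}$ through boundary values and strip off the odd $m\ge 3$ tail. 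Both routes arrive at the same two controlled pieces (a $(\zeta_h)_{xx}$-type remainder of size $h^{4k}$ and a jump term bounded via \eqref{eqn:uiuh2}) and the same discrete summation by parts for $\sum_j h_j g_j\,\bar\zeta_h|_{\jp}$, where the gain on uniform cells comes from \eqref{eqn:pstcoefdiff} and the loss on $I^{NU}$ cells produces the $\sum_{I_j\subset I^{NU}}h^{4k-1}$ term.
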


\begin{proof}
	\ifshowproof 
	From error equation,  
	$ 
	a(e, \overline{\zeta_h}) =  a(\epsilon_h, \overline{\zeta_h}) + a(\zeta_h, \overline{\zeta_h}) = 0,$  which gives us
	\beq
	\label{eqn:dtzeta}
	\frac{d}{dt} \| \zeta_h\|^2  = -a(\epsilon_h, \overline{\zeta_h}) - \overline{a(\epsilon_h, \overline{\zeta_h})} \leq
	\begin{cases} 
	2 \| (\epsilon_h)_{t} \| \|\zeta_h\|, & k=2,		\\
	2 \| (w_{\floor{\frac{k-1}{2}}})_t  \| \|\zeta_h\|, & k \ge 3.			  \end{cases}
	\eeq
	By \eqref{eqn:pstest}, \eqref{eqn:west2} and Gronwall's inequality, we have
	\[
	 \| \zeta_h\|  \leq C_{2,\floor{\frac{k-1}{2}}} t h^{k+1+2\floor{\frac{k-1}{2}}}, \quad \forall t \in (0, T_e].
	\]
	Therefore, when $k$ is odd ,or $k$ is even and parameters satisfy A3, the proof is complete.
	
	When $k$ is even and parameters satisfy any of the assumptions A1/A2, we make use of Lemma \ref{lem:zetaxx2} to show the improved estimates. We let $l=\floor{\frac{k-1}{2}} = \frac{k-2}{2}$, then
	\[
	\begin{split}
	a(\epsilon_h, \overline{\zeta_h}) & = \int_I (w_l)_t \overline{\zeta_h} dx = \sumj \sum_{m=1}^k \partial_t c_{j,m}^l \intj L_{j,m} \overline{\zeta_h} dx							\\
	& = \sumj \partial_t c_{j,1}^l \intj L_{j,1} \overline{\zeta_h} dx + \sumj \sum_{m=2}^k \partial_t c_{j,m}^l \intj L_{j,m} \overline{\zeta_h} dx  \doteq  \mathcal{A}_1+\mathcal{A}_2,
	\end{split}
	\]
	where we denote the first term in the summation by $\mathcal{A}_1,$ and the other summation term as $\mathcal{A}_2.$  
	Note that $D^{-1} L_{j, m} \perp P^0, m \geq 1$ in the inner product sense, thus $D^{-2} L_{j, m} (\pm 1) =0, m \geq 2$.
	By integration by parts, we get
	\begin{align*}
	\mathcal{A}_2 & = \sumj \sum_{m=2}^k \frac{h_j^2}{4} \partial_t c_{j,m}^l \intj D^{-2} L_{j,m} \overline{(\zeta_h)}_{xx} dx \\
	& \leq C h^{-1} \sumj | \frac{h_j^2}{4} \partial_t c_{j,m}^l|^2  + h \sumj \sum_{m=2}^k (\intj D^{-2} L_{j,m} \overline{(\zeta_h)}_{xx} dx )^2 \leq  C_k h^{4k} + C h^2   \| (\zeta_h)_{xx} \|^2  \le C_{k+2} h^{4k} 
	\end{align*}
	where we have used \eqref{eqn:wqexp} in the first inequality, and \eqref{eqn:uiuh1}  in the third inequality.
	
	To estimate $\mathcal{A}_1$, we take the first and second antiderivative of $L_{j,1} = \xi$ as $\frac{h_j}{2}(\frac{\xi^2}{2}- \frac{1}{6})$, $(\frac{h_j}{2} )^2\frac{\xi^3 - \xi }{6}$ and apply integration by parts twice, 
	\begin{align*}
	\mathcal{A}_1  & =  \sumj \frac{h_j}{2} \partial_t c_{j,1}^l  \left ((\frac{\xi^2}{2} - \frac{1}{6})\bar  \zeta_h \big |_{x_{\jm}}^{x_{\jp}} - \intj (\frac{\xi^2}{2} - \frac{1}{6}) (\bar  \zeta_h)_x dx \right)	\\
	& = \sumj \frac{h_j}{2} \partial_t c_{j,1}^l  \left ( \frac{1}{3} ( \bar \zeta_h|^-_{\jp} - \bar  \zeta_h|^+_{\jm}) + \frac{h_j}{2} \intj \frac{\xi^3-\xi}{6} (\bar  \zeta_h)_{xx} dx \right)	\\
	& = \sumj \frac{h_j}{2} \partial_t c_{j,1}^l \left ( -\frac{1}{3} [\bar \zeta_h]_{\jp}  + \frac{h_j}{2} \intj \frac{\xi^3-\xi}{6} (\bar  \zeta_h)_{xx} dx \right)	\\
	& + \sumj  \left ( \Big ( \frac{h_j}{2} \Big ) \partial_t  c_{j,1}^l - \Big ( \frac{h_{j+1}}{2} \Big ) \partial_t  c_{j+1,1}^l  \right ) \frac{1}{3} \bar \zeta_h|_{\jp}^+,
	\end{align*}
	where we have used the periodicity in the last equality. Therefore,
	\begin{align*}
	\abs{\mathcal{A}_1 } & \leq \ot h \sumj \left(\Big ( \frac{h_j}{2} \Big ) | \partial_t c_{j, 1}^l| \right )^2 + h^{-1} \sumj \frac{1}{9}|[\bar \zeta_h]|^2_{\jp}	 
	 + C h^2  \sumj  \| (\zeta_h)_{xx} \|_{L^2(I_j)}^2		\\
	& + \frac{1}{18} h^{-1}\sumj \abs{ \left ( \Big ( \frac{h_j}{2} \Big ) \partial_t  c_{j,1}^l - \Big ( \frac{h_{j+1}}{2} \Big ) \partial_t  c_{j+1,1}^l \right )}^2 + h \sumj \|\zeta_h\|_{L^2(\partial I_j)}^2	\\
	& \leq  C_{k+2} h^{4k}+ C \| \zeta_h\|^2 + C h^{-1}\sumj \abs{ \left ( \Big ( \frac{h_j}{2} \Big ) \partial_t  c_{j,1}^l - \Big ( \frac{h_{j+1}}{2} \Big ) \partial_t  c_{j+1,1}^l \right )}^2
	\end{align*}
	where we used \eqref{eqn:wqexp}, inverse inequality, \eqref{eqn:uiuh1} and \eqref{eqn:uiuh2} in the last inequality.
	
	
	We estimate the last term in $\mathcal{A}_1$ by the
	estimation of the difference of $u_{j,m}$ in neighboring cells, similar to that in Proposition 3.1 of \cite{XingKdvDG}. 
	If $h_j \neq h_{j+1}$, then
	\[
	\Big ( \frac{h_j}{2} \Big ) \partial_t  c_{j,1}^l - \Big ( \frac{h_{j+1}}{2} \Big ) \partial_t  c_{j+1,1}^l \leq C_{k} h^{2k}.
	\]
	
	If $h_j = h_{j+1}$, by \eqref{eqn:wqexp} and \eqref{eqn:pstcoefdiff},
	\[
	\Big ( \frac{h_j}{2} \Big ) \partial_t  c_{j,1}^l - \Big ( \frac{h_{j+1}}{2} \Big ) \partial_t  c_{j+1,1}^l = C \Big ( \frac{h_j}{2} \Big )^{2l+1} \partial_t^{l+1} (u_{j,k-1} - \acute u_{j,k-1} - (u_{j+1,k-1} - \acute u_{j+1,k-1})) \leq C_{k+1} h^{2k+1}.
	\]
	
	Therefore, we have
	$$
	| \mathcal{A}_1 | \leq  C_{k+2}  h^{4k} + C \| \zeta_h\|^2 + \sum_{I_j \subset {I^{NU}}}  C_k  h^{4k-1} .
	$$
	
	Combine with the estimates for $\mathcal{A}_2,$  we have
	$$
	\frac{d}{dt} \| \zeta_h \|^2 \leq C_{k+2}  h^{4k} + C \| \zeta_h\|^2 + \sum_{I_j \subset {I^{NU}}}  C_k  h^{4k-1}. 
	$$
	By Gronwall's inequality and the numerical initial condition, we obtain
	$$
	\| \zeta_h \| \leq ( C_{k+2} h^{4k} + \sum_{I_j \subset {I^{NU}}}   C_k h^{4k-1} )^{\ot} .
	$$
	The proof is now complete.
	\else
	{\cb (Proof omitted in not-show-proof mode)}
	\fi
	\end{proof}



With Theorem \ref{lem:zetaest}, we are able to prove the following superconvergence results.

\begin{Thm}[Superconvergence of numerical fluxes and cell averages]
\label{thm:efec}
Let 
\beq
\label{eqn:efefxec}
E_f = \Big ( \frac{1}{N} \sumj (u - \widehat{u_h})|_{j+\ot}^2 \Big )^{\ot}, 
\quad E_{f_x} = \Big ( \frac{1}{N} \sumj (u_x - \widetilde{(u_h)_x})|_{\jp}^2 \Big )^{\ot}, 
\quad E_c = \Big( \frac{1}{N} \sumj \Big \lvert \frac{1}{h_j} \intj u-u_h dx \Big \rvert ^2 \Big )^{\ot}.
\eeq
be the errors in the two numerical fluxes and the cell averages, respectively.
For $k\geq 2$, suppose $u$ satisfies the condition in Theorem \ref{thm:l2converge}. Assume $u_h|_{t=0} = u_I|_{t=0}$, then $\forall t \in (0, T_e]$
\begin{itemize}
\item if $k$ is odd, parameters satisfy any of the assumptions A1/A2/A3, we have
\beq
\label{eqn:efest1}
E_f  \leq C_{2,{\frac{k-1}{2}}}  h^{2k} , \quad  E_{f_x}  \leq C_{2,{\frac{k-1}{2}}} h^{2k-1}, \quad E_c  \leq C_{2,{\frac{k-1}{2}}}  h^{2k} ,
\eeq
\item  if $k$ is even, parameters satisfy any of the assumptions A1/A2, we have
\begin{align}
\label{eqn:efest2}
\hspace{-0.8in}
&E_f  \leq ( C_{k+2} h^{4k} + \sum_{I_j \subset {I^{NU}}}   C_k h^{4k-1} )^{\ot}, \   E_{f_x}  \leq ( C_{k+2} h^{4k} + \sum_{I_j \subset {I^{NU}}}   C_k h^{4k-1} )^{\ot} h^{-1}, \\ 
& E_c  \leq ( C_{k+2} h^{4k} + \sum_{I_j \subset {I^{NU}}}   C_k h^{4k-1} )^{\ot},
\end{align}
where $I^{NU}$ is the collection of cells in which the length of $I_j$ is different with at least one of its neighbors.
\item if $k$ is even and parameters satisfy assumption A3, we have
\beq
\label{eqn:efest3}
E_f  \leq C_{2,{\frac{k-2}{2}}} h^{2k-1} , \quad  E_{f_x}  \leq C_{2,{\frac{k-2}{2}}} h^{2k-2},  \quad E_c  \leq C_{2,{\frac{k-2}{2}}}  h^{2k-1}.
\eeq
\end{itemize}
\end{Thm}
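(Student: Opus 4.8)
The plan is to express all three error quantities $E_f$, $E_{f_x}$, $E_c$ in terms of the already-controlled local quantities, namely the Legendre coefficients of $\zeta_h = u_I - u_h$ (bounded through $\|\zeta_h\|$ by Theorem \ref{lem:zetaest}), the jumps $[\zeta_h]$ and $[(\zeta_h)_x]$ at interfaces (bounded by Lemma \ref{lem:zetaxx2}), and the projection errors $\epsilon_h = u - u_I$ (bounded by the estimates in Lemma \ref{lem:pstest}, \eqref{eqn:pstest}, and Lemma \ref{lem:west}, \eqref{eqn:west2}, via $w$). The decomposition $u - u_h = \epsilon_h + \zeta_h$ is the organizing device throughout.

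First I would treat the flux errors. Write $u - \widehat{u_h} = (u - \widehat{u_I}) + (\widehat{u_I - u_h}) = (u - \widehat{u_I}) + \widehat{\zeta_h}$ and similarly for the derivative flux. For the $\widehat{\zeta_h}$, $\widetilde{(\zeta_h)_x}$ pieces, the flux formula \eqref{eqn:flux} expresses them as $\{\zeta_h\} - \ao[\zeta_h] + \bt[(\zeta_h)_x]$ etc., so on a uniform-type mesh the scale-invariant scaling of the parameters gives $|\widehat{\zeta_h}| \lesssim |\{\zeta_h\}| + |[\zeta_h]| + h|[(\zeta_h)_x]|$ and $|\widetilde{(\zeta_h)_x}| \lesssim |\{(\zeta_h)_x\}| + h^{-1}|[\zeta_h]| + |[(\zeta_h)_x]|$. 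The averages and traces are then controlled by inverse/trace inequalities \eqref{eqn:inveq}, \eqref{eqn:traceeq} in terms of $\|\zeta_h\|_{L^2}$ (note the factor $h^{-1/2}$ from the trace inequality disappears once we take the $(\frac1N\sum_j)^{1/2}$ root-mean-square average rather than the full $L^2(\partial\mathcal I_N)$ norm, since $\frac1N\sum_j \sim \frac{1}{|I|}\int$). This explains why $E_{f_x}$ loses a power of $h$ relative to $E_f$: the $h^{-1}[\zeta_h]$ term. For the projection part, $u - \widehat{u_I}$: when $u_I = \pst u$ (or when $k\ge 3$, $u_I = \pst u - w$) the defining property \eqref{eqn:psts2}--\eqref{eqn:psts3} gives $u - \widehat{\pst u} = 0$ and $u_x - \widetilde{(\pst u)_x} = 0$ exactly at the interface points, so the only contribution is the flux of the correction $-\widehat{(-w)} = \widehat{w}$, which vanishes by \eqref{eqn:wq2}--\eqref{eqn:wq3}; hence $u - \widehat{u_I} = 0$ and $u_x - \widetilde{(u_I)_x} = 0$ identically. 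Thus $E_f$ and $h E_{f_x}$ reduce purely to the $\zeta_h$ contributions, which plug into Theorem \ref{lem:zetaest} and Lemma \ref{lem:zetaxx2} to give the stated rates in each of the three cases.

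Next, for the cell-average error $E_c$: $\frac{1}{h_j}\int_{I_j}(u - u_h)\,dx = \frac{1}{h_j}\int_{I_j}\epsilon_h\,dx + \frac{1}{h_j}\int_{I_j}\zeta_h\,dx$. The $\zeta_h$ integral is $\le \|\zeta_h\|_{L^2(I_j)} h_j^{-1/2}$, which after the $(\frac1N\sum_j)^{1/2}$ averaging is controlled by $\|\zeta_h\|$, giving the Theorem \ref{lem:zetaest} rate. For the $\epsilon_h$ integral: the cell average of $\epsilon_h = u - \pst u + w$ has zero contribution from $u - \pst u$ because $\pst$ matches the first $k-1$ Legendre moments (property \eqref{eqn:psts1}, $u - \pst u \perp P_c^{k-2}$, so in particular $\int_{I_j}(u-\pst u)L_{j,0}\,dx = 0$ for $k\ge 2$); and the cell average of $w = \sum w_q$ is $C h_j^{2q}$-type small but actually we only need \eqref{eqn:west2}, $\|w\| \le C_{0,\floor{(k-1)/2}} h^{k+1+2\floor{(k-1)/2}}$, so $\frac1{h_j}|\int_{I_j} w| \le h_j^{-1/2}\|w\|_{L^2(I_j)}$ after averaging is $\lesssim h^{k+1+2\floor{(k-1)/2}} = h^{2k}$ ($k$ odd) or $h^{2k-1}$ ($k$ even) — and this matches (and does not dominate) the target bounds, so $E_c$ inherits the $\|\zeta_h\|$ rate from Theorem \ref{lem:zetaest}. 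I would assemble the three cases exactly as in the statement of Theorem \ref{lem:zetaest}.

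The main obstacle I anticipate is bookkeeping the exact power of $h$ in the trace/inverse-inequality steps so that the root-mean-square normalization $(\frac1N\sum_j\cdot)^{1/2}$ is used consistently rather than the full broken boundary norm — getting this wrong would spuriously cost (or gain) a half power of $h$. A secondary subtlety is confirming that for the even-$k$, A1/A2 case the mixed bound $(C_{k+2}h^{4k} + \sum_{I_j\subset I^{NU}} C_k h^{4k-1})^{1/2}$ survives the $h^{-1}$ multiplication for $E_{f_x}$ without any further loss, and that on genuinely nonuniform meshes the scale-invariant flux scaling still gives the clean $|\widehat{\zeta_h}|\lesssim |\{\zeta_h\}|+|[\zeta_h]|+h|[(\zeta_h)_x]|$ estimate uniformly in $j$ (this uses $h/\min h_j < \sigma$). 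Everything else is a routine combination of the cited lemmas.
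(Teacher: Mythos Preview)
Your argument for $E_f$ and $E_{f_x}$ is correct and matches the paper's: the key observation is that $u - \widehat{u_I} = 0$ and $u_x - \widetilde{(u_I)_x} = 0$ exactly at each interface (from \eqref{eqn:psts2}--\eqref{eqn:psts3} and \eqref{eqn:wq2}--\eqref{eqn:wq3}), reducing everything to a trace/inverse bound on $\widehat{\zeta_h}$. The paper does not invoke Lemma \ref{lem:zetaxx2} here; the inverse inequality on $\|\zeta_h\|_{L^2(\partial\mathcal I_N)}$ already gives $E_f \le C\|\zeta_h\|$ and $E_{f_x} \le C h^{-1}\|\zeta_h\|$ directly, so the separate jump estimates are unnecessary (and unavailable under A3 anyway).

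For $E_c$ there is a gap. Your claim $\|w\| \le C_{0,\lfloor(k-1)/2\rfloor}\, h^{k+1+2\lfloor(k-1)/2\rfloor}$ is false: \eqref{eqn:west2} bounds each individual $\|w_q\|$ by $C h^{k+1+2q}$, so $\|w\| = \|\sum_q w_q\|$ is dominated by $\|w_1\| \le C h^{k+3}$, not $h^{2k}$. For $k \ge 5$ this shortcut loses orders. The paper's fix, which your parenthetical ``$Ch_j^{2q}$-type small'' gestures at but then abandons, is to use the Legendre expansion \eqref{eqn:wqexp}: since $w_q|_{I_j}$ is a combination of $L_{j,m}$ with $m \ge k-1-2q \ge 1$ whenever $q < \lfloor(k-1)/2\rfloor$ (odd $k$) or for all $q \le (k-2)/2$ (even $k$), the cell average $\int_{I_j} w_q\,dx$ vanishes by orthogonality for all such $q$. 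Hence $\int_{I_j} \epsilon_h\,dx$ equals $\int_{I_j} w_{\lfloor(k-1)/2\rfloor}\,dx$ (odd $k$) or $0$ (even $k$), and only then does the bound $\|w_{\lfloor(k-1)/2\rfloor}\| \le C h^{2k}$ from \eqref{eqn:west2} enter correctly.
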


\begin{proof}
We first prove the estimates for $E_f$. By \eqref{eqn:wq2} and the definition of $\pst,$ $\widehat \epsilon_h(x_{j+\ot}) = \widehat{u- u_I}(x_{j+\ot}) =0$, then 
\[
(u - \widehat{u_h})|_{j+\ot} = (\widehat{\zeta_h} )|_{j+\ot} = \big (\{ \zeta_h\} -\ao [\zeta_h] + \bt [(\zeta_h)_x] \big )|_{j+\ot}.
\]
Therefore, by inverse inequality and the fact $\bt= \tilde \bt h$,
\[
(E_f)^2  \leq \Big (\frac{1}{N} \| \zeta_h \|^2_{L^2 (\partial \mathcal I_N)} \Big )^{\ot} \leq C \| \zeta_h \|,
\]
and the desired estimates for $E_f$ is obtained by \eqref{eqn:uiblue}. The estimates for $E_{f_x}$ can be obtained following same lines.

Next, we prove the estimates for $E_c$. If $k$ is odd, then $\intj w_q dx = 0, 1 \leq q \leq \frac{k-3}{2},$ by \eqref{eqn:wqexp} and orthogonality of Legendre polynomials. Thus,
$$
\intj u- u_h dx = \intj u- \pst u + \sum_{q=1}^{\floor{\frac{k-1}{2}}} w_q + \zeta_h dx = \intj w_{\floor{\frac{k-1}{2}}} dx + \intj \zeta_h dx.
$$

By the Cauchy-Schwartz inequality, we have
\[
\Big \lvert  \frac{1}{h_j} \intj u- u_h dx \Big \rvert^2 \leq \frac{1}{h_j}( \|\zeta_h \|_{L^2(I_j)}^2 + \|w_{\frac{k-1}{2}}\|_{L^2(I_j)}^2 ).
\]

If $k$ is even, then $\intj w_q dx = 0, 1 \leq q \leq \frac{k-2}{2}$, by \eqref{eqn:wqexp} and orthogonality of Legendre polynomials. Thus, by similar step, we have
\[
\intj u- u_h dx = \intj \zeta_h dx, \quad \Big \lvert  \frac{1}{h_j} \intj u- u_h dx \Big \rvert^2 \leq \frac{1}{h_j}\|\zeta_h \|_{L^2(I_j)}^2.
\]

Therefore, 
\[
E_c \le C ( \|\zeta_h \|^2 + \|w_{\frac{k-1}{2}}\|^2 )^{1/2} \text{ if } k \text{ is odd, } E_c \le C ( \|\zeta_h \|^2)^{1/2} \text{ if } k \text{ is even},
\]
and the desired estimate for $E_c$ is obtained by \eqref{eqn:uiblue} and \eqref{eqn:west2}.
\end{proof}

\begin{Thm}[Superconvergence towards projections and at special points]
\label{thm:uhprojdiff}
Suppose $u$ satisfies the condition in Theorem \ref{thm:l2converge}. Assume $u_h|_{t=0} = \pst u_0$, then $\forall t \in (0, T_e]$,
\beq
\label{eqn:uhpstdiff}
\|u_h - \pst u\| \leq \begin{cases}( C_{4} h^{4k} + \sum_{I_j \subset {I^{NU}}}   C_2 h^{4k-1} )^{\ot} & k = 2, \text{ if A1 or A2} \\  C_{2,1} (1+t) h^{k+3} & k \geq 3,
\end{cases}
\eeq
where $I^{NU}$ is the collection of cells in which the length of $I_j$ is different with at least one of its neighbors.

If $D^s, s = 0, 1, 2$ defined in \eqref{eqn:dsj} are not empty sets. Let 
\beq
\label{eqn:eusuperconv}
\begin{aligned}
&E_u = \Big (\frac{1}{|D^0|} \sum_ {x \in D^0} | (u-u_h)(x)|^2 \Big )^{\ot}, \quad E_{u_x} = \Big ( \frac{1}{|D^1|} \sum_{x \in D^1}  | (u-u_h)_x(x)|^2 \Big )^{\ot} , \\
& E_{u_{xx}} = \Big ( \frac{1}{|D^2|} \sum_{x \in D^2}  | (u-u_h)_{xx}(x)|^2 \Big )^{\ot}
\end{aligned}
\eeq
be the average point value error for the numerical solution, the derivative
of solution and the second order derivative of solution at corresponding sets of points.
Then
\begin{itemize}
\item if $k = 2$ and any of the assumptions A1/A2 is satisfied, we have
\beq
\label{eqn:eusuperconv1}
\begin{split}
& E_u \leq  ( C_{4} h^{4k} + \sum_{I_j \subset {I^{NU}}}   C_2 h^{4k-1} )^{\ot}, \quad E_{u_x} \leq h^{-1} ( C_{4} h^{4k} + \sum_{I_j \subset {I^{NU}}}   C_2 h^{4k-1} )^{\ot}, \\
& E_{u_{xx}} \leq h^{-2}( C_{4} h^{4k} + \sum_{I_j \subset {I^{NU}}}   C_2 h^{4k-1} )^{\ot}.
\end{split}
\eeq
\item if $k \geq 3$ and any of the assumptions A1/A2/A3 is satisfied, we have
\beq
\label{eqn:eusuperconv2}
E_u \leq  C_{2,1}  h^{k+2}, \quad E_{u_x} \leq C_{2,1}  h^{k+1}, \quad E_{u_{xx}} \leq C_{2,1}  h^{k}.
\eeq

\end{itemize}
\end{Thm}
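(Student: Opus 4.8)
The plan is to handle the two assertions separately, reducing each to the bounds on $\zeta_h$ (Theorem~\ref{lem:zetaest}), on the correction functions $w_q$ (Lemma~\ref{lem:west}), and on the projection errors $u-\pst u$, $\pdag u - \pst u$ (Lemmas~\ref{lem:pstest},~\ref{lem:pdagprop}). For $\|u_h-\pst u\|$, I would write $u_h-\pst u=(u_h-u_I)+(u_I-\pst u)=-\zeta_h+(u_I-\pst u)$. When $k=2$, $u_I=\pst u$, so this is exactly $-\zeta_h$ and the bound is Theorem~\ref{lem:zetaest} verbatim. When $k\ge 3$, $u_I-\pst u=-w=-\sum_{q=1}^{\floor{(k-1)/2}}w_q$, so $\|u_h-\pst u\|\le\|\zeta_h\|+\|w\|$; the first is bounded by Theorem~\ref{lem:zetaest} and the second by \eqref{eqn:west2}, whose dominant contribution is $\|w_1\|\le C_{0,1}h^{k+3}$. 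A one-line check ($2k\ge k+3$ and $2k-1\ge k+2$ for $k\ge 3$) shows all three cases of Theorem~\ref{lem:zetaest} give $\|\zeta_h\|\lesssim h^{k+3}$, so $w_1$ sets the rate; the small mismatch $\|\zeta_h|_{t=0}\|=\|w|_{t=0}\|\le C_{0,1}h^{k+3}$ in the numerical initial condition is of the same order and harmless, and one obtains $\|u_h-\pst u\|\le C_{2,1}(1+t)h^{k+3}$.

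For the point-value estimates I would use the \emph{local} expansion \eqref{eqn:updagdiff}--\eqref{eqn:rjm} of $u-\pdag u$, which is available for all of A1/A2/A3, and decompose $u-u_h=(u-\pdag u)+(\pdag u-\pst u)+(\pst u-u_h)$, where the last piece equals $-\zeta_h+w$ when $k\ge 3$ and $-\zeta_h$ when $k=2$. At $x\in D^s_j$ the defining property $\frac{d^s}{dx^s}R_{j,k+1}(x)=0$ kills the leading $m=k+1$ term of \eqref{eqn:updagdiff} (which is only $O(h^{k+1/2-s})$ pointwise and would not be superconvergent), leaving $\partial^s(u-\pdag u)(x)=\sum_{m\ge k+2}u_{j,m}\frac{d^s}{dx^s}R_{j,m}(x)$; bounding $|u_{j,m}|$ by \eqref{eqn:ltcoef} with $l=k+2$, $p=2$ and the coefficients of $R_{j,m}$ by Lemma~\ref{lem:mjm} yields the pointwise bound \eqref{eqn:dsj}, namely $O(h^{k+3/2-s})$.

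The key step is then to average: summing the squares over $D^s$ and dividing by $|D^s|$, which is comparable to $N\sim h^{-1}$ whenever $D^s\neq\emptyset$ (by the cardinality counts established just before the theorem), gains a further factor $h^{1/2}$, so $\big(\tfrac1{|D^s|}\sum_{x\in D^s}|\partial^s(u-\pdag u)(x)|^2\big)^{1/2}\le Ch^{k+2-s}$. The term $\pdag u-\pst u$ vanishes under A1 and is $O(h^{k+2})$ in $L^\infty$ under A2/A3 by \eqref{eqn:projdiff}, hence its $s$-th derivative is $O(h^{k+2-s})$ by the inverse inequality; the $w$-term is $O(h^{k+5/2-s})$ by \eqref{eqn:west2} and the inverse inequality; and the $\zeta_h$-term is $O(h^{-s}\|\zeta_h\|)$ by the same averaging device. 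For $k\ge 3$ the last two are strictly higher order than $h^{k+2-s}$, which yields $E_u\le C_{2,1}h^{k+2}$, $E_{u_x}\le C_{2,1}h^{k+1}$, $E_{u_{xx}}\le C_{2,1}h^{k}$. For $k=2$ there is no correction function, so the $\zeta_h$-contribution is of the same order on a uniform mesh and worse on a nonuniform one; collecting all terms and using $h^{k+2-s}=h^{-s}(h^{2(k+2)})^{1/2}\le h^{-s}(C_4h^{4k})^{1/2}$ gives \eqref{eqn:eusuperconv1}.

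The step I expect to be the main obstacle is precisely the averaging that upgrades the pointwise $O(h^{k+3/2-s})$ to $O(h^{k+2-s})$: this half-power gain is what produces the advertised rates, and it rests entirely on $|D^s|\propto N$, which in the general flux case is only guaranteed through the non-sharp bounds $\mathrm{card}(D^s_j)\ge k-1-s$ — so the theorem is genuinely conditional on the special-point sets being nonempty. The remaining work is bookkeeping: across the six combinations of the parity of $k$ with A1/A2/A3 (and the nonuniform-mesh correction terms), one must track which of $u-\pdag u$, $\pdag u-\pst u$, $w$, $\zeta_h$ dominates, which is why the $k=2$ case and the $k$-even-under-A3 case end up stated separately and with weaker rates.
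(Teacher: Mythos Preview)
Your approach is correct and very close to the paper's. The decomposition $u-u_h=(u-\pdag u)+(\pdag u-\pst u)+(\pst u-u_h)$ for the point-value estimates, the use of \eqref{eqn:dsj} with the averaging gain $|D^s|\sim N\sim h^{-1}$, and the appeal to \eqref{eqn:projdiff} for the middle piece are exactly what the paper does.

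There is one difference worth flagging in the treatment of $\|u_h-\pst u\|$ for $k\ge 3$. You keep the full $u_I=\pst u-w$ from Section~\ref{sec:correction} and then invoke Theorem~\ref{lem:zetaest}, absorbing the initial-condition mismatch $\|\zeta_h(0)\|=\|w(0)\|$ as a same-order term. This works, but notice that Theorem~\ref{lem:zetaest} itself carries the constant $C_{2,\lfloor(k-1)/2\rfloor}$ (from $\|(w_{\lfloor(k-1)/2\rfloor})_t\|$), so for $k\ge 5$ your argument as written requires higher regularity than the stated $C_{2,1}$. The paper avoids this by \emph{redefining} $u_I=\pst u-w_1$ using only the first correction function, so that $a(\epsilon_h,v_h)=\int_I(w_1)_t v_h\,dx$ directly, and then runs the Gronwall argument from scratch: $\|\zeta_h(t)\|\le \|w_1(0)\|+t\sup_s\|(w_1)_t(s)\|\le C_{2,1}(1+t)h^{k+3}$, followed by $u_h-\pst u=-\zeta_h-w_1$. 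This is precisely the ``relaxation of regularity'' the paper is after (cf.\ the Remark immediately following the proof). Your route and the paper's are the same in spirit; the paper's choice of $u_I$ is just the minimal one that delivers the sharp $C_{2,1}$.

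A very minor point: your one-line check should read $2k-1\ge k+3$ for even $k\ge 4$ (not $2k-1\ge k+2$), which is what you actually need; since even $k\ge 3$ forces $k\ge 4$, the conclusion is unaffected.
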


\begin{proof}
When $k=2,$ we have $u_h-\pst u=-\zeta_h$. If any of the assumptions A1/A2 is satisfied, by \eqref{eqn:uiblue}, we have
\[
\| u_h-\pst u \| \leq ( C_{4} h^{4k} + \sum_{I_j \subset {I^{NU}}}   C_2 h^{4k-1} )^{\ot}.
\]

When $k \geq 3$, to relax the regularity requirement, we follow the same steps in Lemma \ref{lem:west}, and change the definition of $u_I$ to $u_I =\pst u - w_1$. Then $\epsilon_h = u - u_I, \zeta_h = u_I - u_h$ and we obtain
\[
\abs{a(\epsilon_h, v_h)} \leq C_{2,1} h^{k+3}\|v_h\| , \quad \forall v_h \in V_h^k.
\]

By the estimates above, \eqref{eqn:west2} and the error equation, we obtain 
\[
	\frac{d}{dt} \| \zeta_h\|^2 \leq 2 \| (w_1)_t  \| \|\zeta_h\| \leq C_{2,1} h^{k+3}.
\]
By Gronwall's inequality,
\[
\| \zeta_h \| \leq C_{2,1} t h^{k+3} +  \| (\zeta_h)|_{t=0}\| = C_{2,1} t h^{k+3} +  \| w_1|_{t=0}\|\leq  C_{2,1} (1+t) h^{k+3}, \quad \forall t \in (0, T_e],
\]
where the initial numerical discretization is used in the first equality. Since $u_h -\pst u = - \zeta_h - w_1$, it follows that $\forall t \in (0, T_e]$, 
\[
\| u_h -\pst u\| \leq \| \zeta_h \| + \| w_1\| \leq C_{2,1} (1+t) h^{k+3}.
\]
Then the proof for \eqref{eqn:uhpstdiff} is complete.

If any of the assumptions A1/A2/A3 is satisfied, then
\[
\begin{split}
E_u  & \leq \Big (\frac{1}{|D^0|} \sum_{x \in D^0} | (u- \pdag u )(x)|^2 +  | (\pst u - u_h )(x)|^2 +  | (\pst u - \pdag u )(x)|^2 \Big )^{\ot} 	\\
	& \leq C h^{k+2} |u|_{W^{k+2,2}(I)} + C \| \pst u - u_h \| + C \| \pst u - \pdag u \|,
\end{split}
\]
where \eqref{eqn:dsj}, inverse inequality, and \eqref{eqn:projdiff} are used in the last inequality.
Then the estimates for $E_u$ is proven by Lemma \ref{lem:pdagprop} and \eqref{eqn:uhpstdiff}. The estimates for $E_{u_x}$ and $E_{u_{xx}}$ can be proven following the same lines.

\begin{rem}
If the initial discretization is taken as $u_h|_{t=0} = u_I|_{t=0}$, the theorem above
still holds. However, the regularity requirement will be higher.
\end{rem}

%
%
%
\end{proof}

\subsection{Superconvergence after postprocessing} 
\label{sec:postp}
In this section, we analyze the superconvergence property of the postprocessed DG solutions for linear \sch equation \eqref{eqn:ls} on uniform mesh by using negative Sobolev norm estimates. 
The postprocessor was originally introduced in \cite{bramble1977higher, mock1978computation} for finite difference and finite element methods, and later applied  to DG methods in \cite{cockburn2003enhanced}. 
The postprocessed solution is computed by convoluting the numerical solution $u_h$ with a kernel function $K^{\nu, l}_h(x) = \frac{1}{h^d}K^{\nu,l}(\frac{x}{h})$, where $d$ is the number of spatial dimensions, and $l$ is the index of $H^{-l}$ norm we're trying to estimate later.
The convolution kernel has three main properties. 
First, it has compact support, making post processing computationally advantageous. 
Second, it preserves polynomials of degree up to $\nu -1$ by convolution, thus the convergence rate is not deteriorated. 
Third, the kernel $K^{\nu,l}$ is a linear combination of B-splines, which allows us to express the derivatives of kernel by difference quotients (see section 4.1 in \cite{cockburn2003enhanced}).

We give the formula for the convolution kernel when the DG scheme uses approximation space $V_h^k$:
$$
K^{2(k+1),k+1} (x) = \sum_{\gamma = -k}^k k_{\gamma}^{2(k+1),k+1} \psi ^{(k+1)}(x-\gamma),
$$
where $\psi^{(k+1)}$ are the B-spline bases and the computation procedure of coefficients $k_{\gamma}^{2(k+1),k+1}$ can be found in  \cite{ryan2005extension}. Then we can define the postprocessed DG solution as
\beq
\label{eqn:ust}
u^*  = \int_{-\infty}^\infty K_h^{2(k+1),k+1}({y-x} ) u_h (y) dy.
\eeq
$u^*$ is an ``averaged'' version of $u_h$ such that it is closer as an approximation to the exact solution $u$.
Lastly, we define divided difference as
$$
d_h v(x) = \frac{1}{h} (v(x + \ot h) - v(x-\ot h)).
$$

Now we are ready to state an approximation result showing the smoothness of $u$ and negative Sobolev norm of divided difference lead to a bound on $u - u^*$.
\begin{Thm}[Bramble and Schatz \cite{bramble1977higher}] 
\label{thm:bramble}
Suppose 
$u^*$ is defined in \eqref{eqn:ust} and $K_h^{2(k+1),k+1} = \frac{1}{h}K^{2(k+1),k+1}(\frac{x}{h})  $, where $ K^{2(k+1),k+1}$ is a kernel function as defined above. Let $u$ be the exact solution of linear \sch equation \eqref{eqn:ls} satisfying periodic boundary condition, $u \in H^{2k+2}(I)$. Then for arbitrary time $t \in (0, T_e]$, $h$ sufficiently small, we have
\beq
\label{eqn:pp}
\|u -  u^*\| \leq C h^{2k+2} |u|_{H^{2k+2}(I)} + \sum_{\alpha \leq {k+1}} \|d_h^\alpha ( u- u_h)\|_{H^{-(k+1)}(\mathcal{I}_N)},
\eeq
where C is independent of $u$ and $h$.
\end{Thm}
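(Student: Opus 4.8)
The plan is to follow the classical Bramble--Schatz argument, adapted to DG solutions as in \cite{cockburn2003enhanced}, splitting the error into an approximation-theoretic part and a part that only sees the numerical error. Writing
\beqn
u - u^* = \big(u - K_h^{2(k+1),k+1}\star u\big) + K_h^{2(k+1),k+1}\star(u - u_h) =: \mathrm{I} + \mathrm{II},
\eeqn
where $\star$ denotes (periodic) convolution over $\mathcal I_N$, it suffices to bound $\|\mathrm I\|$ by the first term on the right of \eqref{eqn:pp} and $\|\mathrm{II}\|$ by the sum of negative-norm terms there.

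For $\mathrm I$ I would use the moment properties built into the kernel: the coefficients $k_\gamma^{2(k+1),k+1}$ are chosen so that $K^{2(k+1),k+1}$ reproduces all polynomials of degree $\le 2k+1$ under convolution, i.e. $\int K^{2(k+1),k+1}(y)\,y^m\,dy = \delta_{m,0}$ for $0\le m\le 2k+1$. Expanding $u$ about each point in a Taylor polynomial of degree $2k+1$, the reproduction property annihilates the polynomial part; the compact support of $K_h^{2(k+1),k+1}$ (on an interval of length $O(h)$) together with the uniform bound $\|K_h^{2(k+1),k+1}\|_{L^1}\le C$ independent of $h$ converts the Taylor remainder into $\|\mathrm I\|\le C h^{2k+2}|u|_{H^{2k+2}(I)}$. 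This is a Bramble--Hilbert / translation-invariance argument and is the routine half.

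For $\mathrm{II}$ the essential point is the B-spline structure of $K^{2(k+1),k+1}$ together with the derivative--difference identity for B-splines (each derivative of a B-spline of a given order is a unit-shift difference of a B-spline of one order lower), which lets me trade $k+1$ derivatives off the kernel against divided-difference operators acting on the argument. Concretely, $K_h^{2(k+1),k+1}\star(u-u_h)$ can be rewritten as $\sum_{\alpha\le k+1}\Theta_{h,\alpha}\star d_h^{\alpha}(u-u_h)$, where each $\Theta_{h,\alpha}$ is, up to the standard $h$-scaling, a fixed compactly supported smooth function that is bounded in $H^{k+1}(\mathcal I_N)$ uniformly in $h$. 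Then for each $\alpha$ the definition of the negative norm gives $\|\Theta_{h,\alpha}\star d_h^{\alpha}(u-u_h)\|\le C\|d_h^{\alpha}(u-u_h)\|_{H^{-(k+1)}(\mathcal I_N)}$, and summing over $\alpha\le k+1$ yields the second term of \eqref{eqn:pp}.

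I expect the main obstacle to be the bookkeeping in the last step: accounting correctly for the powers of $h$ produced by each divided difference versus each derivative removed from the kernel, so that the $\Theta_{h,\alpha}$ remain uniformly bounded in $H^{k+1}$ and no negative power of $h$ leaks into the final bound; and carrying this out consistently on the periodic interval $\mathcal I_N$ (with $u_h$ extended periodically, so that convolutions and negative norms are interpreted accordingly) rather than on all of $\mathbb R$. The polynomial-reproduction property and the B-spline derivative identities themselves are standard and can be quoted from \cite{cockburn2003enhanced, ryan2005extension}; the care is entirely in combining them with the scaling $K_h = h^{-1}K(\cdot/h)$.
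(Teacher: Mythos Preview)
The paper does not prove this theorem; it is quoted from Bramble and Schatz \cite{bramble1977higher} (as adapted to DG in \cite{cockburn2003enhanced}) and used as a black box. Your sketch is the standard Bramble--Schatz argument and is correct in outline, so there is nothing to compare against in the paper itself.
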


The right hand side of \eqref{eqn:pp} indicates that if $\|d_h^\alpha ( u- u_h)\|_{H^{-(k+1)}(\mathcal{I}_N)}$ converges at a rate higher than $k+1$, then we have superconvergence property for the postprocessed solution. In what follows, we estimate the negative-norm term following the steps in \cite{cockburn2003enhanced}. First, we introduce a dual problem: 
find a function $v$ such that $v(\cdot, t)$ is periodic function with period equal to the length of $I$, i.e., $b-a$ for all $t\in (0,T_e]$ and
\beq
\begin{aligned}
iv_t - v_{xx} = 0, 	&\quad \text{in} \ \  \mathbb{R} \times (0, T_e), 	\\
v(x,T_e) = \Phi(x), 	&\quad x \in \mathbb{R},
\end{aligned}
\eeq
where $\Phi$ is an arbitrary function in $\mathcal{C}_0^\infty (I)$.
We use the notation $(\phi, \psi) := \int_I \phi \, \psi dx$ in this section. At final time $T_e$,
\begin{align*}
(u(T_e) - u_h(T_e), \Phi) 	& = (u, v)(T_e) - (u_h, v)(T_e) 	\\
				& = (u,v)(0) + \int_0^{T_e}	\{ (u,v_t) + (u_t,v) \} dt   - (u_h, v)(T_e)	\\
				& = (u,v)(0)  - (u_h, v)(0) - \int_0^{T_e} \{((u_h)_t, v) + (u_h, v_t)\} dt	\\
				& = (u-u_h,v)(0) - \int_0^{T_e} \{((u_h)_t, v) + (u_h, v_t)\} dt,
\end{align*}
where the property $u v_t + u_t v=0$ is used to obtain the third equality.

The DG solution $u_h$ satisfies \eqref{eqn:scheme}. Therefore, we have $\forall v_h \in V_h^k$
\begin{align*}
((u_h)_t, v) 	&= ((u_h)_t, v-v_h) + ((u_h)_t, v_h) 	\\
			&= ((u_h)_t, v-v_h) + i A (u_h,v_h)	\\
			&= ((u_h)_t, v-v_h) - i A(u_h,v-v_h) + i A(u_h, v).
\end{align*}

Then we obtain 
$$
(u(T_e) - u_h(T_e), \Phi) = \Theta_M + \Theta_N + \Theta_C,
$$
where
\[
\begin{split}
\Theta_M & =  (u-u_h,v)(0),	\\
\Theta_N & = -\int_0^{T_e} \{ ((u_h)_t, v-v_h) - i A(u_h,v-v_h) \} dt, \quad \forall v_h \in V_h^k,	\\
\Theta_C & = -\int_0^{T_e} \{ (u_h, v_t) + i A(u_h,v) \} dt.
\end{split}
\]

By choosing the initial numerical discretization $u_h(0) = P_h^0 u_0$ and $v_h = P_h^0 v$, we have $\Theta_M  =  (u-u_h,v)(0) = (u-u_h,v-v_h)(0)$ and
$$
\abs{\Theta_M} \leq \|(u-u_h)(0) \|  \cdot \|(v-v_h)(0) \|  \leq Ch^{2k+2} \|u\|_{H^{k+1}(I)} \|v\|_{H^{k+1}(I)}.
$$

Since $v$ is a smooth function, we have
$$
\Theta_C = -\int_0^{T_e} \{ (u_h, v_t) +i (u_h, v_{xx}) \} dt = 0.
$$

Choose $v_h = P_h^0 v$ and from the symmetry of the operator $A(\cdot, \cdot),$ we get
\begin{align*}
\abs{\Theta_N}		& = \abs {\int_0^{T_e} A(u_h, v-v_h) dt} = \abs {\int_0^{T_e} A(v-v_h,u_h) dt}			 = \abs {\int_0^{T_e} \sumj \big (\widehat {v-v_h}[(u_h)_x] - \widetilde {(v-v_h)_x}[u_h] \big ) \big \rvert_{\jp} dt } 		\\
				& = \abs {\int_0^{T_e} \sumj \big (\widehat {v-v_h}[u_x - (u_h)_x] - \widetilde {(v-v_h)_x}[u - u_h] \big ) \big \rvert_{\jp} dt}		\\
				& \leq CT_e \max_{t \in (0, T_e]} \big (\|u - u_h \|_{L^2(\partial \mathcal I_N)} \| \widetilde {(v-v_h)_x} \|_{L^2(\partial \mathcal I_N)} +  \|(u - u_h)_x \|_{L^2(\partial \mathcal I_N)} \| \widehat {v-v_h} \|_{L^2(\partial \mathcal I_N)} \big ).	
\end{align*}


By \eqref{eqn:l2converge},
\begin{align*}
\|u - u_h\|_{L^2(\partial \mathcal I_N)} & = \|u - \pst u\|_{L^2(\partial \mathcal I_N)} + \|\pst u - u_h\|_{L^2(\partial \mathcal I_N)} 	\\
	& \leq C_0 h^{k+\ot} + C h^{-\ot}\|\pst u - u_h\| 	 \leq C_2 h^{k+\ot},
\end{align*}
where we have used Lemma \ref{lem:pstest} and Theorem \ref{thm:l2converge}.
Similarly, we have $\|u_x - (u_h)_x\|_{L^2(\partial \mathcal I_N)} \leq C_2 h^{k-\ot}$. By the property of $L^2$ projection $h^{\frac{3}{2}}\|v_x - (v_h)_x\|_{L^2(\partial \mathcal I_N)} +  h\|v_x-(v_h)_x\| + h^{\ot}\|v-v_h\|_{L^2(\partial \mathcal I_N)}+ \|v- v_h\| \leq C h^{k+1}\|v\|_{H^{k+1}(I)}$. Then it is straightforward that for scale invariant fluxes
\[
\| \widehat {v-v_h} \|_{L^2(\partial \mathcal I_N)}	\leq Ch^{k+\ot} \|v\|_{H^{k+1}(I)}, \quad \| \widetilde {(v-v_h)_x} \|_{L^2(\partial \mathcal I_N)} \leq C h^{k - \ot} \|v\|_{H^{k+1}(I)} .
\]
Therefore, we have
\beq
\label{eqn:thetanest}
\abs{ \Theta_N} \leq C_2 h^{2k} \|v\|_{H^{k+1}(I)} .
\eeq

Combine the above three estimate and the fact $\|v\|_{H^{k+1}(I)} = \|\Phi\|_{H^{k+1}(I)}$, we have
$$
\|u(T_e) - u_h(T_e) \|_{H^{-(k+1)}(I)} \leq C_2 h^{2k}.
$$

Since we consider $u_h$ with optimal error estimates on uniform mesh with mesh size $h$, then the divided difference $d_h^\alpha u$ satisfies the linear \sch but with initial data $d_h^\alpha u_0, \alpha \leq k+1$ on shifted mesh. Similarly, $d_h^\alpha u_h$ also satisfies the DG scheme \eqref{eqn:scheme} but with shifted mesh and initial numerical discretization $d_h^\alpha u_h = \plt d_h^\alpha u_0$. Then by the same proof for $u-u_h$ above, 
\beq
\label{eqn:unegest}
\|d_h^{\alpha} ( u- u_h)(T_e)\|_{H^{-(k+1)}(I)}
 \leq C_{2+\alpha} h^{2k},
\eeq
where we used Taylor expansion to estimate $d_h^\alpha u$ to obtain the last inequality.

The following theorem is a result of \eqref{eqn:unegest} and Theorem  \ref{thm:bramble}.
\begin{Thm}
Let $u_h$ be the UWDG solution of \eqref{eqn:scheme}, suppose the conditions in Theorem  \ref{thm:bramble} and any of the assumptions A1/A2/A3 is satisfied, then on a uniform mesh
\beq
\label{eqn:postpest}
\|u(T_e) - u^* (T_e) \| \leq C_{k+3} h^{2k}.
\eeq
\end{Thm}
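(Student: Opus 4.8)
The plan is to combine the Bramble--Schatz approximation result (Theorem~\ref{thm:bramble}) with the negative-norm estimate \eqref{eqn:unegest}, which has already been established in the preceding discussion. First I would invoke \eqref{eqn:pp}, which bounds $\|u(T_e)-u^*(T_e)\|$ by the sum of $Ch^{2k+2}|u|_{H^{2k+2}(I)}$ and $\sum_{\alpha\le k+1}\|d_h^\alpha(u-u_h)(T_e)\|_{H^{-(k+1)}(\mathcal I_N)}$. This step requires only the smoothness $u\in H^{2k+2}(I)$ assumed in Theorem~\ref{thm:bramble}, which is guaranteed by the regularity hypotheses carried over from Theorem~\ref{thm:l2converge}, together with the uniform-mesh assumption under which the postprocessor and \eqref{eqn:pp} are stated.

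Next I would estimate the two pieces on the right-hand side of \eqref{eqn:pp}. The first term is $O(h^{2k+2})$, which is of strictly higher order than the claimed $O(h^{2k})$ and hence harmless. For the divided-difference sum, I would apply \eqref{eqn:unegest}, namely $\|d_h^\alpha(u-u_h)(T_e)\|_{H^{-(k+1)}(I)}\le C_{2+\alpha}h^{2k}$ for each $\alpha\le k+1$. Summing the $k+2$ terms and absorbing the largest constant, attained at $\alpha=k+1$ and equal to $C_{k+3}$, yields a bound of the form $C_{k+3}h^{2k}$ for the whole sum.

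Finally I would collect the two contributions to conclude $\|u(T_e)-u^*(T_e)\|\le Ch^{2k+2}+(k+2)C_{k+3}h^{2k}\le C_{k+3}h^{2k}$ for $h$ sufficiently small, which is precisely the assertion. Since assumptions A1/A2/A3 and the hypotheses of Theorem~\ref{thm:bramble} are exactly what legitimizes \eqref{eqn:unegest} and the optimal $L^2$ and trace estimates used in its derivation, no additional hypotheses are needed.

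The one genuine obstacle in this circle of ideas --- the $(2k)$-th order negative-norm bound \eqref{eqn:unegest}, whose loss of one power of $h$ relative to the hyperbolic case originates from the boundary term $\widehat{v-v_h}[u_x-(u_h)_x]-\widetilde{(v-v_h)_x}[u-u_h]$ in the ultra-weak bilinear form $A(\cdot,\cdot)$ and the consequent $O(h^{k-1/2})$ factors on $\partial\mathcal I_N$ --- has already been resolved in the paragraphs above this theorem, so the proof of the theorem itself is a brief bookkeeping step. If I had to flag a subtlety, it is the consistency between the global negative norm on $I$ appearing in \eqref{eqn:unegest} and the broken norm $H^{-(k+1)}(\mathcal I_N)$ in \eqref{eqn:pp}; this is reconciled by periodicity and the compact support of the kernel $K_h^{2(k+1),k+1}$, so the two notions agree up to a mesh-independent constant.
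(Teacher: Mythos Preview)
Your proposal is correct and follows exactly the paper's approach: the paper states the theorem as an immediate consequence of Theorem~\ref{thm:bramble} and the negative-norm estimate \eqref{eqn:unegest}, which is precisely the combination you carry out. Your additional remarks on the subtleties are reasonable but go beyond what the paper records, since it treats the result as a one-line corollary.
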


\section{Numerical Experiments}
\label{sec:numerical}

In this section, we provide numerical tests  demonstrating superconvergence properties.
In the proof, we see that the initial value of $u_h$ matters in estimating $\|u_h - u_I\|$, thus will impact the superconvergence estimation for $E_f$ and $E_{f_x}$. Therefore, in our numerical tests, we apply two types of initial discretization for $u_h$. For computing the postprocessed solution $u^*$, we use the standard $L^2$ projection $\plt u$ as numerical initialization to demonstrate the convergence enhancement ability of postprocessor. For verifying other superconvergence quantities, we apply the initial condition $u_h|_{t = 0} = u_I|_{t=0}$.
In order not to deteriorate the high order convergence rates, for temporal discretization, we use explicit Runge-Kutta fourth order method with $dt = c \cdot h^{2.5}$, $c = 0.05$ when $k=2$ and $c=0.01$ when $k=3,4$. 

\begin{Example}
\label{exa:ls3}
We compute \eqref{eqn:ls} on $[0, 2\pi]$ with exact solution $u(x,t) = exp(i3(x-3t))$ using DG scheme \eqref{eqn:scheme}. We verify the results with several flux parameters.
\end{Example}

In the following tables, we show the convergence rate for quantities $E_f, E_{f_x}, E_c, E_u, E_{u_x}, E_{u_{xx}}$ as defined in \eqref{eqn:efefxec} and \eqref{eqn:eusuperconv} as well as
\beq
E^* = \|u - u^*\|, \quad E_P = \|u_h - \pst u \|,
\eeq
which represent the error after postprocessing, and the error between numerical solution and the projected exact solution $\pst u$.
In addition, we test the superconvergence of the intermediate quantities $\zeta_h = \pst u -w - u_h$ as in Lemma \ref{lem:zetaxx2}, and introduce the following notations:
\[
E_{[\zeta_h]} = (\frac{1}{N} \sumj |[\zeta_h]|_{\jp}^2)^{\ot}, \quad E_{[(\zeta_h)_x]} = (\frac{1}{N} \sumj |[(\zeta_h)_x]|_{\jp}^2)^{\ot}.
\]

The numerical fluxes we tested include
\begin{enumerate}
\item  Tables \ref{tab:alt}, \ref{tab:altzeta}: A1 parameters, alternating flux, $ \ao = 0.5, \bo = \bt = 0,$ with nonuniform mesh;
\item  Tables \ref{tab:local}, \ref{tab:localzeta}: A1 parameters, a   scale invariant flux, $ \ao = 0.3, \bo = \frac{0.4}{h}, \bt = 0.4 h,$ with nonuniform mesh;
\item  Tables \ref{tab:central}, \ref{tab:centralzeta}: A2 parameters, central flux, $ \ao = \bo = \bt = 0,$ with uniform mesh;
\item  Tables \ref{tab:global}: A3 parameters, $\ao = 0.25,  \bt = 0, \bo = \frac{2}{h}, \frac{5}{h}, \frac{9}{h}$ for $k =2, 3, 4,$ respectively, with uniform mesh;
\item  Table \ref{tab:postperr}: all parameters mentioned above, with uniform mesh,
\end{enumerate}
where the nonuniform mesh is generated by perturbing the location of the nodes of a uniform mesh by $10\%$ of mesh size.

We first verify the results in Theorems \ref{thm:efec}, \ref{thm:uhprojdiff} by examining Tables \ref{tab:alt}, \ref{tab:local}, \ref{tab:central}, \ref{tab:global}, where the parameters satisfy assumption A1, A1, A2, A3, respectively.
We observe that the scheme can achieve at least the theoretical order of convergence for the quantities in these two theorems. 
To be more specific, $E_P$ shows $(k+\min(3,k))$-th order of convergence. $E_u, E_{u_x}, E_{u_{xx}}$ are shown to have $(k+2)$-th, $(k+1)$-th and $k$-th order of convergence
, respectively. Note that when $k=2$, in Tables \ref{tab:local} and \ref{tab:global}, 
there are situations when no superconvergence point exists. This finding
shows an evidence to the assertion that $D^{s}$ defined in \eqref{eqn:dsj}
could be empty sets. The order of convergence for $E_f, (E_f)_x, E_c$ in all tables are $2k$. In addition, Table \ref{tab:global}
shows that when $k$ is even and assumption A3 is satisfied, the convergence order for all quantities are the same as when any of assumption A1/A2 is satisfied, which is one order higher than
the estimates in Theorems \ref{thm:efec}, \ref{thm:uhprojdiff}.
In Tables \ref{tab:alt} and \ref{tab:local}, we used nonuniform mesh in numerical test. The quantities tested have
similar order of convergence compared to the order of convergence on uniform mesh.
Another interesting observation is the order of convergence of $E_{f_x}$. Our numerical tests show that $E_{f_x}$
converges at an order of $2k$ for all four sets of parameters, which is at least one order
higher than the estimates in Theorem \ref{thm:efec}.


Next, we test the order of convergence for quantities in Lemma \ref{lem:zetaxx2}. 
In Tables \ref{tab:altzeta} and \ref{tab:localzeta}, we observe clean convergence order of $2k-1$, $2k+1$, $2k$ for 
$\|(\zeta_h)_{xx}\|$, $E_{[\zeta_h]}$, $E_{[(\zeta_h)_x]}$ when $k$ is even and $2k$, $2k+2$, $2k+1$ for these three quantities when $k$ is odd. 
In Table \ref{tab:centralzeta}, the order of convergence has some fluctuation, but the quantities are shown to have the same order of convergence as those in Tables \ref{tab:altzeta} and \ref{tab:localzeta}.
These convergence rates are consistent with the results in Lemma \ref{lem:zetaxx2}.


Lastly, we test the order of convergence for $E^*$ on uniform mesh for the four sets of parameters. 
Table \ref{tab:postperr} shows that $E^*$ has a convergence rate of at least $2k$, and can go up to $2k+2$. 
Similar higher order of convergence behaviors   exists in the literature \cite{cockburn2003enhanced, ryan2005extension}.

\begin{table}[!h]
    \centering
    \tiny
	\caption{Example \ref{exa:ls3}. Error table when using alternating flux on nonuniform mesh. Ending time $T_e=1$, $x \in [0, 2\pi]$.}
	\label{tab:alt}
	\hspace*{-2.4cm}
	\begin{tabular}{|c|c|c|c|c|c|c|c|c|c|c|c|c|c|c|c|c|c|}
	\hline
	& N & $L^2$ error & order & 	$E_P$ & order & $E_{u_{xx}}$& order & $E_{u_x}$& order & $E_u$ 	& order & $E_f$	 & order &
$E_{f_x}$   & order & $E_c$ & order	\\
	\hline
	\multirow{6}{1em}{$ P^2$} 
&  10&    2.68E-01&   -&    2.53E-01&   -&    2.36E+00&   -&    8.92E-01&   -&    2.83E-01&   -&    2.79E-01&   -&    9.10E-01&   -&    5.00E-01&   -\\
&  20&    2.68E-02&   3.32&    2.47E-02&   3.36&    2.78E-01&   3.09&    7.43E-02&   3.59&    2.30E-02&   3.62&    2.31E-02&   3.60&    6.94E-02&   3.71&    4.48E-02&   3.48\\
&  40&    2.00E-03&   3.75&    1.42E-03&   4.12&    6.02E-02&   2.21&    5.29E-03&   3.81&    1.47E-03&   3.97&    1.46E-03&   3.98&    4.45E-03&   3.96&    2.91E-03&   3.94\\
&  80&    1.91E-04&   3.39&    9.22E-05&   3.95&    1.50E-02&   2.00&    3.83E-04&   3.79&    9.12E-05&   4.01&    9.11E-05&   4.01&    2.75E-04&   4.02&    1.82E-04&   4.00\\
& 160&    2.19E-05&   3.12&    5.83E-06&   3.98&    3.78E-03&   1.99&    3.24E-05&   3.56&    5.78E-06&   3.98&    5.77E-06&   3.98&    1.74E-05&   3.98&    1.15E-05&   3.98\\
	\hline
	\multirow{4}{1em}{$ P^3$} 
&  10&    1.02E-02&   -&    7.51E-03&   -&    1.60E-01&   -&    2.50E-02&   -&    6.81E-03&   -&    6.78E-03&   -&    1.98E-02&   -&    1.16E-02&   -\\
&  20&    5.65E-04&   4.18&    1.24E-04&   5.93&    2.00E-02&   3.00&    8.23E-04&   4.93&    1.40E-04&   5.61&    1.36E-04&   5.64&    4.08E-04&   5.60&    2.64E-04&   5.45\\
&  40&    2.94E-05&   4.26&    2.13E-06&   5.86&    2.43E-03&   3.04&    3.83E-05&   4.42&    2.16E-06&   6.01&    2.02E-06&   6.07&    6.06E-06&   6.07&    4.03E-06&   6.03\\
&  80&    1.87E-06&   3.98&    3.02E-08&   6.14&    2.99E-04&   3.02&    2.29E-06&   4.06&    3.73E-08&   5.86&    3.05E-08&   6.05&    9.13E-08&   6.05&    6.11E-08&   6.04\\
& 160&    1.18E-07&   3.98&    4.57E-10&   6.05&    3.76E-05&   2.99&    1.45E-07&   3.98&    8.04E-10&   5.53&    4.59E-10&   6.05&    1.38E-09&   6.05&    9.25E-10&   6.05\\

	\hline
	\multirow{6}{1em}{$P^4$}	
&  10&    6.45E-04&   -&    8.76E-05&   -&    1.79E-02&   -&    8.67E-04&   -&    1.09E-04&   -&    9.69E-05&   -&    2.83E-04&   -&    1.62E-04&   -\\
&  20&    2.06E-05&   4.97&    6.12E-07&   7.16&    1.25E-03&   3.84&    3.06E-05&   4.82&    1.04E-06&   6.72&    5.02E-07&   7.59&    1.57E-06&   7.49&    9.69E-07&   7.39\\
&  40&    6.83E-07&   4.91&    2.52E-09&   7.92&    7.45E-05&   4.06&    9.00E-07&   5.09&    1.28E-08&   6.34&    1.76E-09&   8.16&    5.26E-09&   8.22&    3.50E-09&   8.11\\
&  80&    2.04E-08&   5.07&    1.55E-11&   7.35&    4.70E-06&   3.99&    2.78E-08&   5.02&    1.94E-10&   6.04&    6.99E-12&   7.97&    2.09E-11&   7.97&    1.40E-11&   7.97\\
& 160&    6.10E-10&   5.06&    1.02E-13&   7.24&    2.85E-07&   4.04&    8.34E-10&   5.06&    2.85E-12&   6.09&    2.79E-14&   7.97&    5.55E-13&   5.23&    5.07E-14&   8.10\\
	\hline
 	\end{tabular}
\end{table}

\begin{table}[!h]
\centering
\tiny
\tiny
	\caption{Example \ref{exa:ls3}. Error table for intermediate quantities when using alternating flux on nonuniform mesh. Ending time $T_e=1$, $x \in [0, 2\pi]$.}
	\label{tab:altzeta}
	\begin{tabular}{|c|c|c|c|c|c|c|c|c|c|c|}
	\hline
	& N & $\|\zeta_h\|$ error & order &  $\|(\zeta_h)_{xx}\|$ & order & $E_{[\zeta_h]}$& order & $E_{[(\zeta_h)_x]}$ 	& order \\
	\hline
	\multirow{6}{1em}{$ P^2$} 
&  10&    3.96E-01&   -&    3.02E+00&   -&    4.51E-02&   -&    3.37E-01&   -\\
&  20&    3.28E-02&   3.60&    2.23E-01&   3.76&    1.57E-03&   4.84&    1.82E-02&   4.21\\
&  40&    2.08E-03&   3.98&    1.42E-02&   3.98&    4.21E-05&   5.22&    9.42E-04&   4.28\\
&  80&    1.29E-04&   4.01&    7.95E-04&   4.15&    1.17E-06&   5.16&    5.50E-05&   4.10\\
& 160&    8.17E-06&   3.98&    9.54E-05&   3.06&    3.83E-08&   4.94&    3.55E-06&   3.96\\
	\hline		
	\multirow{4}{1em}{$ P^3$}
&  10&    9.54E-03&   -&    8.83E-02&   -&    7.52E-05&   -&    1.31E-03&   -\\
&  20&    1.93E-04&   5.63&    1.76E-03&   5.65&    1.75E-07&   8.75&    6.16E-06&   7.74\\
&  40&    2.86E-06&   6.08&    2.59E-05&   6.09&    3.42E-10&   9.00&    2.45E-08&   7.97\\
&  80&    4.31E-08&   6.05&    3.91E-07&   6.05&    1.45E-12&   7.88&    2.71E-10&   6.50\\
& 160&    6.87E-10&   5.97&    6.19E-09&   5.98&    6.76E-15&   7.74&    2.59E-12&   6.71\\
	\hline	
    	\multirow{6}{1 em}{$P^4$}
&  10&    1.35E-04&   -&    1.41E-03&   -&    4.97E-07&   -&    2.81E-05&   -\\
&  20&    7.10E-07&   7.57&    8.60E-06&   7.36&    1.20E-09&   8.69&    9.88E-08&   8.15\\
&  40&    2.50E-09&   8.15&    4.24E-08&   7.66&    2.56E-12&   8.88&    2.90E-10&   8.41\\
&  80&    9.90E-12&   7.98&    2.55E-10&   7.38&    2.98E-15&   9.74&    7.48E-13&   8.60\\
& 160&    3.58E-14&   8.11&    2.22E-12&   6.85&    8.90E-18&   8.39&    5.46E-15&   7.10\\
	\hline
 	\end{tabular}
\end{table}

\begin{table}[!h]
    \centering
    \tiny
	\caption{Example \ref{exa:ls3}. Error table when using flux parameters: $\alpha_1 = 0.3,  \beta_1 = \frac{0.4}{h},  \beta_2 = 0.4h $ on nonuniform mesh. Ending time $T_e=1$, $x \in [0, 2\pi]$.}
	\label{tab:local}
	\hspace*{-2.4cm}
	\begin{tabular}{|c|c|c|c|c|c|c|c|c|c|c|c|c|c|c|c|c|c|}
	\hline
	& N & $L^2$ error & order & 	$E_P$ & order & $E_{u_{xx}}$& order & $E_{u_x}$& order & $E_u$ 	& order & $E_f$	 & order &
$E_{f_x}$   & order & $E_c$ & order	\\
	\hline
\multirow{5}{1em}{$ P^2$}
&  40&    1.66E-02&   -&    1.28E-02&   -&    2.38E-01&   -&         DNE&   	-&    1.26E-02&   -&    1.26E-02&   -&    3.83E-02&   -&    2.49E-02&   -\\
&  80&    1.50E-03&   3.47&    7.46E-04&   4.10&    2.52E-02&   3.24&         DNE&    	-&    7.43E-04&   4.08&    7.42E-04&   4.08&    2.28E-03&   4.07&    1.48E-03&   4.07\\
& 160&    1.70E-04&   3.14&    4.91E-05&   3.92&    4.51E-03&   2.48&         DNE&    	-&    4.82E-05&   3.95&    4.82E-05&   3.95&    1.47E-04&   3.95&    9.63E-05&   3.94\\
& 320&    2.16E-05&   2.98&    3.12E-06&   3.98&    8.89E-04&   2.34&         DNE&    	-&    3.09E-06&   3.96&    3.09E-06&   3.96&    9.39E-06&   3.97&    6.18E-06&   3.96\\
& 640&    2.64E-06&   3.03&    1.96E-07&   3.99&    2.00E-04&   2.15&         DNE&    	-&    1.94E-07&   3.99&    1.94E-07&   3.99&    5.86E-07&   4.00&    3.88E-07&   3.99\\
	\hline	
	\multirow{5}{1em}{$ P^3$}

&  10&    2.08E-02&   -&    1.57E-02&   -&    2.00E-01&   -&    4.47E-02&   -&    1.44E-02&   -&    1.41E-02&   -&    4.38E-02&   -&    2.49E-02&   -\\
&  20&    1.14E-03&   4.19&    2.75E-04&   5.84&    2.23E-02&   3.17&    1.42E-03&   4.98&    3.08E-04&   5.54&    3.05E-04&   5.53&    9.17E-04&   5.58&    5.92E-04&   5.39\\
&  40&    5.91E-05&   4.27&    4.83E-06&   5.83&    2.72E-03&   3.03&    6.57E-05&   4.43&    4.70E-06&   6.04&    4.56E-06&   6.06&    1.37E-05&   6.06&    9.12E-06&   6.02\\
&  80&    3.75E-06&   3.98&    6.81E-08&   6.15&    3.36E-04&   3.02&    3.91E-06&   4.07&    7.56E-08&   5.96&    6.88E-08&   6.05&    2.06E-07&   6.05&    1.38E-07&   6.04\\
& 160&    2.39E-07&   3.97&    1.09E-09&   5.96&    4.23E-05&   2.99&    2.47E-07&   3.98&    1.03E-09&   6.20&    1.10E-09&   5.97&    3.29E-09&   5.97&    2.21E-09&   5.97\\
	\hline
	\multirow{5}{1em}{$P^4$}

&  10&    9.72E-04&   -&    1.48E-04&   -&    1.93E-02&   -&    1.32E-03&   -&    1.77E-04&   -&    1.65E-04&   -&    4.69E-04&   -&    2.73E-04&   -\\
&  20&    3.17E-05&   4.94&    1.04E-06&   7.16&    1.37E-03&   3.82&    4.63E-05&   4.83&    1.56E-06&   6.83&    8.66E-07&   7.57&    2.52E-06&   7.54&    1.66E-06&   7.36\\
&  40&    1.05E-06&   4.91&    4.15E-09&   7.97&    8.18E-05&   4.06&    1.35E-06&   5.10&    1.83E-08&   6.41&    3.04E-09&   8.15&    8.85E-09&   8.15&    6.02E-09&   8.11\\
&  80&    3.14E-08&   5.07&    2.51E-11&   7.37&    5.16E-06&   3.99&    4.27E-08&   4.99&    2.82E-10&   6.02&    1.21E-11&   7.98&    3.76E-11&   7.88&    2.41E-11&   7.97\\
& 160&    9.39E-10&   5.06&    1.63E-13&   7.27&    3.13E-07&   4.04&    1.27E-09&   5.07&    4.12E-12&   6.10&    4.41E-14&   8.10&    1.30E-13&   8.18&    8.73E-14&   8.11\\
	\hline	
 	\end{tabular}
\end{table}

\begin{table}[!h]
\centering
\tiny
\tiny
	\caption{Example \ref{exa:ls3}. Error table for intermediate quantities when using flux parameters: $\alpha_1 = 0.3,  \beta_1 = \frac{0.4}{h},  \beta_2 = 0.4h$ on nonuniform mesh. Ending time $T_e=1$, $x \in [0, 2\pi]$.}
	\label{tab:localzeta}
	\begin{tabular}{|c|c|c|c|c|c|c|c|c|c|c|}
	\hline
	& N & $\|\zeta_h\|$ error & order &  $\|(\zeta_h)_{xx}\|$ & order & $E_{[\zeta_h]}$& order & $E_{[(\zeta_h)_x]}$ 	& order \\
	\hline
	\multirow{5}{1em}{$ P^2$} 
&  40&    1.46E-02&   -&    2.67E-01&   -&    2.55E-03&   -&    2.13E-02&   -\\
&  80&    9.35E-04&   3.97&    2.57E-02&   3.38&    7.74E-05&   5.04&    1.25E-03&   4.09\\
& 160&    5.96E-05&   3.97&    2.86E-03&   3.17&    2.52E-06&   4.94&    7.56E-05&   4.05\\
& 320&    3.76E-06&   3.99&    3.30E-04&   3.11&    7.74E-08&   5.02&    4.76E-06&   3.99\\
& 640&    2.38E-07&   3.98&    4.25E-05&   2.96&    2.57E-09&   4.91&    3.19E-07&   3.90\\ 
	\hline	
	\multirow{5}{1em}{$ P^3$}
&  10&    2.02E-02&   -&    1.78E-01&   -&    5.73E-04&   -&    1.15E-03&   -\\
&  20&    4.31E-04&   5.55&    3.90E-03&   5.52&    1.09E-06&   9.04&    5.48E-06&   7.71\\
&  40&    6.46E-06&   6.06&    5.83E-05&   6.06&    3.27E-09&   8.38&    2.99E-08&   7.52\\
&  80&    9.72E-08&   6.05&    8.76E-07&   6.06&    9.55E-12&   8.42&    1.84E-10&   7.34\\
& 160&    1.55E-09&   5.97&    1.40E-08&   5.97&    4.28E-14&   7.80&    1.56E-12&   6.89\\
	\hline	
	\multirow{5}{1em}{$P^4$}
&  10&    2.27E-04&   -&    2.23E-03&   -&    1.03E-06&   -&    2.06E-06&   -\\
&  20&    1.22E-06&   7.54&    1.30E-05&   7.42&    4.13E-09&   7.96&    1.66E-08&   6.96\\
&  40&    4.30E-09&   8.15&    5.93E-08&   7.78&    1.01E-11&   8.67&    6.95E-11&   7.90\\
&  80&    1.71E-11&   7.98&    4.66E-10&   6.99&    1.82E-14&   9.12&    2.33E-13&   8.22\\
& 160&    6.17E-14&   8.11&    3.19E-12&   7.19&    3.61E-17&   8.98&    8.34E-16&   8.13\\
	\hline
 	\end{tabular}
\end{table}

\begin{table}[!h]
    \centering
    \tiny
	\caption{Example \ref{exa:ls3}. Error table when using central flux on uniform mesh. Ending time $T_e=1$, $x \in [0, 2\pi]$.}
	\label{tab:central}
	\hspace*{-2.3cm}
	\begin{tabular}{|c|c|c|c|c|c|c|c|c|c|c|c|c|c|c|c|c|c|}
	\hline
	& N & $L^2$ error & order & 	$E_P$ & order & $E_{u_{xx}}$& order  & $E_{u_x}$& order & $E_u$ 	& order & $E_f$	 & order &
$E_{f_x}$   & order & $E_c$ & order\\
	\hline
	\multirow{6}{1em}{$ P^2$} 
&  40&    4.20E-03&   -&    3.21E-03&   -&    4.86E-01&   -&    3.39E-02&   -&    3.24E-03&   -&    3.21E-03&   -&    9.58E-03&   -&    6.36E-03&   -\\
&  80&    4.31E-04&   3.29&    2.23E-04&   3.85&    1.33E-01&   1.87&    4.49E-03&   2.92&    2.25E-04&   3.85&    2.23E-04&   3.85&    6.86E-04&   3.80&    4.44E-04&   3.84\\
& 160&    4.92E-05&   3.13&    1.43E-05&   3.96&    3.41E-02&   1.97&    5.69E-04&   2.98&    1.45E-05&   3.96&    1.43E-05&   3.96&    3.90E-05&   4.14&    2.86E-05&   3.96\\
& 320&    5.99E-06&   3.04&    9.01E-07&   3.99&    8.57E-03&   1.99&    7.14E-05&   2.99&    9.10E-07&   3.99&    9.01E-07&   3.99&    3.00E-06&   3.70&    1.80E-06&   3.99\\
& 640&    7.44E-07&   3.01&    5.60E-08&   4.01&    2.15E-03&   2.00&    8.94E-06&   3.00&    5.66E-08&   4.01&    5.60E-08&   4.01&    1.51E-07&   4.31&    1.12E-07&   4.01\\
	\hline	
	\multirow{6}{1em}{$ P^3$} 	
&  20&    3.18E-04&   -&    7.32E-05&   -&    4.31E-02&   -&    3.34E-03&   -&    1.88E-04&   -&    7.28E-05&   -&    2.16E-04&   -&    1.41E-04&   -\\
&  40&    1.71E-05&   4.21&    1.02E-06&   6.16&    5.49E-03&   2.97&    2.04E-04&   4.03&    5.17E-06&   5.19&    1.02E-06&   6.16&    3.07E-06&   6.14&    2.03E-06&   6.12\\
&  80&    1.03E-06&   4.05&    1.55E-08&   6.04&    6.89E-04&   2.99&    1.27E-05&   4.01&    1.63E-07&   4.99&    1.54E-08&   6.04&    4.63E-08&   6.05&    3.10E-08&   6.03\\
& 160&    6.41E-08&   4.01&    2.41E-10&   6.01&    8.62E-05&   3.00&    7.91E-07&   4.00&    5.04E-09&   5.01&    2.39E-10&   6.01&    7.18E-10&   6.01&    4.81E-10&   6.01\\
& 320&    4.00E-09&   4.00&    3.76E-12&   6.00&    1.08E-05&   3.00&    4.94E-08&   4.00&    1.57E-10&   5.01&    3.73E-12&   6.00&    1.12E-11&   6.00&    7.51E-12&   6.00\\
	\hline
	\multirow{6}{1em}{$ P^4$} 
&  10&    5.04E-04&   -&    7.75E-05&   -&    1.14E-01&   -&    1.32E-02&   -&    2.21E-04&   -&    7.63E-05&   -&    2.15E-04&   -&    1.27E-04&   -\\
&  20&    2.10E-05&   4.58&    4.91E-07&   7.30&    1.11E-02&   3.36&    6.17E-04&   4.42&    5.70E-06&   5.28&    4.52E-07&   7.40&    1.26E-06&   7.42&    8.66E-07&   7.20\\
&  40&    7.32E-07&   4.84&    2.65E-09&   7.53&    8.01E-04&   3.79&    2.17E-05&   4.83&    1.05E-07&   5.76&    2.05E-09&   7.78&    6.17E-09&   7.67&    4.04E-09&   7.74\\
&  80&    2.36E-08&   4.96&    1.60E-11&   7.37&    5.21E-05&   3.94&    6.76E-07&   5.01&    1.72E-09&   5.93&    8.31E-12&   7.94&    2.57E-11&   7.91&    1.66E-11&   7.93\\
& 160&    7.42E-10&   4.99&    1.13E-13&   7.15&    3.29E-06&   3.99&    2.19E-08&   4.95&    2.72E-11&   5.98&    3.27E-14&   7.99&    9.96E-14&   8.01&    6.55E-14&   7.98\\
	\hline	
 	\end{tabular}
\end{table}

\begin{table}[!h]
\centering
\tiny
\tiny
	\caption{Example \ref{exa:ls3}. Error table for intermediate quantities when using central flux on uniform mesh. Ending time $T_e=1$, $x \in [0, 2\pi]$.}
	\label{tab:centralzeta}
	\begin{tabular}{|c|c|c|c|c|c|c|c|c|c|c|}
	\hline
	& N & $\|\zeta_h\|$ error & order &  $\|(\zeta_h)_{xx}\|$ & order & $E_{[\zeta_h]}$& order & $E_{[(\zeta_h)_x]}$ 	& order \\
	\hline
	\multirow{5}{1em}{$ P^2$} 
&  40&    4.53E-03&   -&    3.84E-02&   -&    3.04E-05&   -&    2.79E-04&   -\\
&  80&    3.15E-04&   3.85&    3.03E-03&   3.66&    1.16E-06&   4.71&    8.31E-06&   5.07\\
& 160&    2.02E-05&   3.96&    1.29E-04&   4.55&    1.79E-07&   2.70&    1.31E-06&   2.66\\
& 320&    1.27E-06&   3.99&    1.59E-05&   3.03&    7.14E-09&   4.65&    5.08E-08&   4.69\\
& 640&    7.92E-08&   4.01&    5.73E-07&   4.79&    2.90E-10&   4.62&    2.11E-09&   4.59\\
	\hline	
	\multirow{6}{1em}{$ P^3$}
&  20&    1.03E-04&   -&    9.27E-04&   -&    4.27E-08&  -&    5.25E-06&   -\\
&  40&    1.44E-06&   6.16&    1.29E-05&   6.17&    1.75E-10&   7.93&    4.32E-08&   6.93\\
&  80&    2.18E-08&   6.04&    1.99E-07&   6.02&    4.23E-13&   8.69&    1.64E-10&   8.04\\
& 160&    3.38E-10&   6.01&    3.05E-09&   6.03&    2.91E-16&  10.50&    3.24E-14&  12.31\\
& 320&    5.28E-12&   6.00&    4.76E-11&   6.00&    1.28E-18&   7.83&    1.41E-15&   4.52\\	\hline
	\multirow{6}{1em}{$ P^4$}
&  10&    1.06E-04&   -&    1.05E-03&   -&    7.67E-07&   -&    2.26E-05&   -\\
&  20&    6.37E-07&   7.37&    7.12E-06&   7.21&    3.03E-09&   7.99&    7.70E-08&   8.20\\
&  40&    2.88E-09&   7.79&    3.20E-08&   7.80&    5.78E-12&   9.03&    6.36E-11&  10.24\\
&  80&    1.17E-11&   7.94&    1.84E-10&   7.44&    9.33E-15&   9.28&    5.24E-13&   6.92\\
& 160&    4.63E-14&   7.98&    2.45E-12&   6.23&    3.04E-17&   8.26&    8.82E-16&   9.21\\
	\hline	
 	\end{tabular}
\end{table}

\begin{table}[!h]
    \centering
    \tiny
	\caption{Example \ref{exa:ls3}. Error table when using flux parameters: $\alpha_1 = 0.25, \beta_1 =  \frac{2}{h}, \frac{5}{h}, \frac{9}{h}, \bt = 0$ on uniform mesh. Ending time $T_e=1$, $x \in [0, 2\pi]$.}
	\label{tab:global}
	\hspace*{-2.4cm}
	\begin{tabular}{|c|c|c|c|c|c|c|c|c|c|c|c|c|c|c|c|c|c|}
	\hline
	& N & $L^2$ error & order & 	$E_P$ & order & $E_{u_{xx}}$& order & $E_{u_x}$& order & $E_u$ 	& order & $E_f$	 & order &
$E_{f_x}$   & order & $E_c$ & order	\\
	\hline
	\multirow{6}{1em}{$ P^2$} 	
&  80&    1.41E-03&   -&   		8.17E-05&   	-&    DNE&    -&    1.15E-02&   -&    1.19E-04&   -&    8.07E-05&   -&    1.71E-04&   -&    1.61E-04&   -\\
& 160&    1.65E-04&   3.09&    4.74E-06&   4.11&    DNE&    -&    1.34E-03&   3.11&    6.89E-06&   4.11&    4.67E-06&   4.11&    5.39E-06&   4.99&    9.34E-06&   4.11\\
& 320&    2.03E-05&   3.02&    2.92E-07&   4.02&    DNE&    -&    1.65E-04&   3.02&    4.21E-07&   4.03&    2.86E-07&   4.03&    1.82E-06&   1.57&    5.75E-07&   4.02\\
& 640&    2.53E-06&   3.01&    1.80E-08&   4.03&    DNE&    -&    2.05E-05&   3.01&    2.62E-08&   4.01&    1.78E-08&   4.00&    1.40E-07&   3.70&    3.58E-08&   4.01\\
&1280&    3.16E-07&   3.00&    1.22E-09&   3.88&    DNE&    -&    2.55E-06&   3.01&    1.71E-09&   3.94&    1.21E-09&   3.88&    5.55E-09&   4.65&    2.43E-09&   3.88\\
	\hline	
	\multirow{6}{1em}{$ P^3$} 
&  20&    8.27E-04&   -&    4.58E-05&   -&    2.98E-01&   -&    6.63E-03&   -&    3.40E-04&   -&    3.50E-05&   -&    9.99E-05&   -&    7.80E-05&   -\\
&  40&    3.92E-05&   4.40&    5.20E-07&   6.46&    3.11E-02&   3.26&    3.60E-04&   4.20&    8.78E-06&   5.27&    4.26E-07&   6.36&    1.24E-06&   6.34&    9.58E-07&   6.35\\
&  80&    2.29E-06&   4.10&    7.54E-09&   6.11&    3.72E-03&   3.06&    2.18E-05&   4.05&    2.61E-07&   5.07&    6.24E-09&   6.09&    1.87E-08&   6.05&    1.42E-08&   6.08\\
& 160&    1.40E-07&   4.03&    1.16E-10&   6.03&    4.60E-04&   3.02&    1.35E-06&   4.01&    8.05E-09&   5.02&    9.60E-11&   6.02&    2.86E-10&   6.03&    2.19E-10&   6.02\\
& 320&    8.74E-09&   4.01&    1.80E-12&   6.01&    5.74E-05&   3.00&    8.43E-08&   4.00&    2.50E-10&   5.01&    1.49E-12&   6.01&    4.47E-12&   6.00&    3.41E-12&   6.00\\
	\hline
	\multirow{6}{1em}{$ P^4$} 
&  20&    5.10E-04&   -&    2.08E-04&   -&    3.76E-01&   -&    3.96E-03&   -&    1.36E-04&   -&    1.08E-05&   -&    3.52E-05&   -&    2.10E-05&   -\\
&  40&    8.28E-06&   5.95&    2.38E-07&   9.77&    1.24E-02&   4.92&    6.76E-05&   5.87&    1.16E-06&   6.87&    2.19E-08&   8.95&    6.74E-08&   9.03&    4.34E-08&   8.92\\
&  80&    1.87E-07&   5.47&    1.04E-09&   7.84&    5.64E-04&   4.47&    1.55E-06&   5.45&    1.33E-08&   6.45&    6.23E-11&   8.46&    2.15E-10&   8.29&    1.25E-10&   8.45\\
& 160&    5.44E-09&   5.10&    7.11E-12&   7.19&    3.29E-05&   4.10&    4.53E-08&   5.09&    1.94E-10&   6.09&    2.28E-13&   8.09&    6.15E-13&   8.45&    4.56E-13&   8.09\\
	\hline	
 	\end{tabular}
\end{table}

\begin{table}[!h]
\centering
\tiny
\caption{Example \ref{exa:ls3}. Postprocessing error table for the four sets of parameters. Ending time $T_e=1$,  uniform mesh on $x \in [0, 2\pi]$. The first row below labels the parameters by $(\tilde \ao, \tilde \bo, \tilde \bt)$.}
\label{tab:postperr}
	\begin{tabular}{|c|c|c|c|c|c|c|c|c|c|c|}
	\hline
\multicolumn{2}{|c|}{Fluxes}	& \multicolumn{2}{|c|}{(0.5,0,0)} & \multicolumn{2}{|c|}{(0, 0, 0)} &  \multicolumn{2}{|c|}{(0.3, 0.4, 0.4)} & \multicolumn{2}{|c|}{(0.25, \{2, 5, 9\}, 0)}	\\	\hline
	& N	& $E^*$		& order	& $E^*$		& order	& $E^*$		& order 	& $E^*$ 	& order	\\
\hline
	\multirow{5}{1em}{$ P^2$} 	
& 10		&    1.00E+00&   -			&    2.81E-01&   -			&    1.00E+00&   -		&    1.53E-01&   -\\	
& 20		&    2.84E-01&   1.81         	&    3.71E-02&   2.92              	 &    1.20E-01&   3.06	&    8.05E-02&   0.93\\                                                                                                                         
& 40		&    2.11E-02&   3.75          	&    3.23E-03&   3.52             	&    9.63E-03&   3.64  	&    2.68E-03&   4.91\\                                                                                                                        
& 80		&    1.37E-03&   3.94           	&    2.24E-04&   3.85                 	&    7.55E-04&   3.67  	&   1.20E-04&   4.49\\                                                                                                                     
& 160		&    8.69E-05&   3.98		&    1.44E-05&   3.96			&    5.13E-05&   3.88		&   6.99E-06&   4.10\\
\hline
	\multirow{5}{1em}{$ P^3$} 	
& 10 		&    1.00E+00&   -		&    1.00E+00&   -	&    1.00E+00&   -		&    1.00E+00&   -\\
& 20 		&    6.04E-02&   4.05			&    6.29E-02&   3.99		&    6.05E-02&   4.05		&    7.02E-02&   3.83\\
& 40 		&    5.39E-04&   6.81			&    6.05E-04&   6.70		&    5.26E-04&   6.85		&    5.46E-04&   7.01\\
& 80 		&    3.28E-06&   7.36                 &    5.04E-06&   6.91      	&    2.82E-06&   7.54  	&    2.91E-06&   7.55\\                                                                                                                                            
& 160 		&    3.14E-08&   6.70		&    6.49E-08&   6.28		&    2.04E-08&   7.11		&    1.79E-08&   7.34\\
\hline
	\multirow{5}{1em}{$ P^4$} 	
& 10 & 1.00E+00&   -				&    1.00E+00&   -		&    1.00E+00&   -		&    1.00E+00&   -\\
& 20 & 4.54E-02&   4.46				&    4.54E-02&   4.46			&    4.54E-02&   4.46		&    4.54E-02&   4.46\\
& 40 & 1.32E-04&   8.42				&    1.32E-04&   8.42			&    1.32E-04&   8.42		&    1.36E-04&   8.39\\
& 80 & 1.70E-07&   9.60				&    1.70E-07&   9.60			&    1.70E-07&   9.60		&    1.66E-07&   9.67\\
& 160 & 1.79E-10&   9.89				&    1.80E-10&   9.89			&    1.79E-10&   9.89		&    1.75E-10&   9.89\\
\hline
	\end{tabular}

\end{table}

\section{Conclusions and Future Work}
\label{sec:conclusion}
 
In this paper, we studied the superconvergence property of the
UWDG methods with scale invariant fluxes for linear \sch equation
 in one dimension with periodic boundary condition. When
 $k$ is odd, and $k$ is even with the flux parameters satisfying certain assumptions, we  proved
$(2k)$-th order convergence rate for cell averages and numerical flux, and $(2k-1)$-th order convergence
rate for numerical flux for the  derivative. In addition,   the numerical solution
is convergent towards a special projection with $(k+\min(3,k))$-th order convergence rate. The results were obtained by the correction
function techniques in \cite{cao2014superconvergence} and intermediate results of the superconvergence
of the second derivative and jump across cell interfaces of the difference between numerical solution
and projected exact solution. However, for some special flux parameter choices when $k$ is even, 
such intermediate results are no longer valid. Therefore, under this condition, the provable convergence rate is one order lower than the previous cases, though numerical results seem to suggest otherwise. Indeed, our numerical experiments show
 $(2k)$-th convergence rate for cell average, numerical flux, and the numerical flux for the derivative, 
 and $(k+\min(3,k))$-th convergence rate of the difference between numerical solution and a special projection.
 The surprising finding that the numerical flux of solution and derivative of solution both converge at
 rate $2k$ indicates that our proof can be improved for the numerical
 flux for derivative.
 We also showed that the  convergence order of UWDG scheme can 
 be enhanced to $2k$ by postprocessing. In numerical tests, the orders of convergence for the postprocessed
 solution are at least $2k$, and can go up to $2k+2$ when $k \geq 3$.

 There are some recent development of extending the correction function technique to nonlinear equations and high dimensional equations.
 It would be our future work to extend this work to nonlinear \sch equations in two dimensional setting. Also, there
 are developments in the negative norm estimates for the error of DG schemes for nonlinear equations, it will also be an interesting subject to consider.


\bibliographystyle{abbrv}
\bibliography{ac@msu} 

\appendix
\section{Appendix}
\subsection{Collections of intermediate results}
\label{apdx:localerr}
In this section, we list some results that will be used in the rest of the appendix.
First, we gather some results from \cite{2018arXiv180105875C}.  
(59) - (63) in \cite{2018arXiv180105875C} yields 
\beq
\label{eqn:minv1}
\sumj \|r_j\|_\infty \leq C, \quad \text{if A2},
\eeq
where $r_j$ has been defined in \eqref{eqn:rj}.
(59) and (71) and the first equation in A.3.3 in \cite{2018arXiv180105875C} yields 
\beq
\label{eqn:minv2}
\sumj \|r_j\|_\infty \leq Ch^{-2}, \quad \text{if A3}.
\eeq

Next, we provide estimates of the Legendre coefficients in neighboring cells of equal size. 
 
If $u \in W^{k+2+n,\infty}(I)$, then expand $\hat u_j(\xi)$ at $\xi = -1$ in \eqref{eqn:ltcoefcompute} 
by Taylor series, we have for $m \geq k+1$, $\exists z\in [-1,1]$, s.t.
\beq
\label{eqn:ujmtaylor}
	\begin{split}
		u_{j,m} & = C \int_{-1}^1 \frac{d}{d\xi^{k+1}} \Big(\sum_{s=0}^n \frac{d}{d\xi^s}\hat u_j (-1) \frac{(\xi+1)^s}{s!}+ 
	\frac{d}{d\xi^{n+1}} \hat u_j(z) \frac{(\xi+1)^{n+1}}{(n+1)!} \Big)  \frac{d}{d\xi^{m-k-1}} (\xi^2 -1)^{m} d\xi,	\\
				& = \sum_{s=0}^n \theta_s h_j^{k+1+s} u^{(k+1+s)}(x_{\jm}) + O (h_j^{k+2+n} |u|_{W^{k+2+n,\infty}(I_j)}),
	\end{split}
\eeq
where $\theta_l$ are constants   independent of $u$ and $h_j$.

Therefore, when $h_j = h_{j+1}$, we use Taylor expansion again, and compute the difference of two $u_{j, m}$ from neighboring cells
\beq
\label{eqn:ujmdiff}
\begin{split}
u_{j, m} - u_{j+1,m} & = \sum_{s=1}^n \mu_s h_j^{k+1+s} u^{(k+1+s)}(x_{\jm}) + O (h_j^{k+2+n} |u|_{W^{k+2+n,\infty}(I_j \cup I_{j+1})}).	\\
\end{split}
\eeq 
Then we obtain the estimates
\beq
\label{eqn:ujmdiffest}
|u_{j, m} - u_{j+1,m} + \sum_{s=1}^n \mu_s h_j^{k+1+s} u^{(k+1+s)}(x_{\jm}) | 	\leq C h^{k+2+n}|u|_{W^{k+2+n,\infty}(I_j \cup I_{j+1})},
\eeq
where $\mu_s$ are constants independent of $u$ and $h_j$.

\subsubsection{Two convolution-like operators}
In the proof of Lemmas 3.8 and 3.9 in \cite{2018arXiv180105875C}, we used Fourier analysis for error analysis. Now we extract the main ideas and generalize the 
results to facilitate the proof of superconvergence results in Lemmas \ref{lem:pdagprop} and  \ref{lem:west}.

We define two operators on a periodic functions $u$ in $L^2(I)$:
\begin{subequations}
\begin{align}
	\label{eqn:sumladef1}
\sumla_{\lambda}u (x) & = \frac{1}{1-\lambda^N} \suml \lambda^l u(x+L \frac{l}{N}),	\\
	\label{eqn:sumladef2}
\sumnj u(x) & = \suml (-1)^l \frac{-N + 2l}{2} u(x+L \frac{l}{N}),
\end{align}
\end{subequations}
where $L = b-a$ is the size of $I$.

Expand $u$ by Fourier series, i.e., $u(x) = \sumn \hat f(n) e^{2\pi inx/L}$, we have
\[
\begin{split}
\sumla_{\lambda}u (x) 	& = \frac{1}{1-\lambda^N} \suml \lambda^l \sumn \hat f(n) e^{in (\frac{2\pi}{L} x + 2\pi \frac{l}{N})}  = \frac{1}{1-\lambda^N} \sumn \hat f(n) e^{\frac{2\pi}{L}  in x} \suml ( \lambda e^{i2\pi \frac{n}{N}} )^l	\\
			& = \sumn \frac{\hat f(n)}{1 - \lambda e^{2\pi i \frac{n}{N}}} e^{\frac{2\pi}{L}  inx},	\\
\sumnj u(x) 			& = \suml (-1)^l \frac{-N + 2l}{2} \sumn \hat f(n) e^{in (\frac{2\pi}{L} x + 2\pi \frac{l}{N})}	 = \sumn \hat f(n)  e^{\frac{2\pi}{L}  in x} \suml \frac{-N + 2l}{2} (-e^{i2\pi \frac{n}{N}})^l	\\
			& = \sumn \frac{-2e^{2\pi i \frac{n}{N}}}{(1 + e^{2\pi i \frac{n}{N}})^2}  \hat f(n) e^{\frac{2\pi}{L} in x}.
\end{split}
\]
In addition, we can apply the operator on the same function recursively, we have
\[
\begin{split}
	\sumla_{\lambda_1}^{\nu_1} \cdots \sumla_{\lambda_n}^{\nu_n} u (x)	& = \sumn \frac{1}{(1 - \lo e^{2\pi i \frac{n}{N}})^{\nu_1}} \cdots \frac{1}{(1 - \lambda_n e^{2\pi i \frac{n}{N}})^{\nu_n}}  \hat f(n) e^{i\frac{2\pi}{L}  inx},	\\
(\sumla_{\lambda})^\nu u (x) & = \sumn \frac{\hat f(n)}{(1 - \lambda e^{2\pi i \frac{n}{N}})^\nu} e^{\frac{2\pi}{L}  inx},	\\
(\sumnj)^\nu u(x) 	& = \sumn \Big(\frac{-2e^{2\pi i \frac{n}{N}}}{(1 + e^{2\pi i \frac{n}{N}})^2} \Big )^\nu \hat f(n) e^{\frac{2\pi}{L}  in x}.
\end{split}
\]

As shown in the proof of Lemmas 3.8 and   3.9 in \cite{2018arXiv180105875C}, if $\lambda_i, i \leq n$ is a complex number with $|\lambda_i| = 1$, independent of $h$, then
\beq
\label{eqn:sumlaest}
\sumla_{\lambda_1}^{\nu_1} \cdots \sumla_{\lambda_n}^{\nu_n} u (x) \leq C | u |_{W^{1+\sum_{i=1}^n \nu_i,1}(I)}, \quad (\sumnj u(x))^\nu \leq C | u |_{W^{1+2\nu,1}(I)}.
\eeq


\subsection{Proof of Lemma \ref{lem:mjm}}
\label{sec:mjmproof}
\begin{proof}
$$
A_j+B_j = G [L_{j,k-1}^-, L_{j,k}^-] + H [L_{j,k-1}^+, L_{j,k}^+]  = \ot \bmat 1 & 0 \\ 0 & \ohj \emat  M_+ + \bmat \ao & -\bt \\ -\bo & -\ao \emat \bmat 1 & 0 \\ 0 & \ohj \emat  M_-,
$$
where
\[
M_\pm = \bmat 1 & 0 \\ 0 & h_j \emat [L_{j,k-1}^- \pm L_{j,k-1}^+, L_{j,k}^- \pm L_{j,k}^+] = 
\bmat 1\pm(-1)^{k-1} & 1\pm(-1)^{k} \\  k(k-1)( 1\pm (-1)^k) & k(k+1)(1\pm(-1)^{k+1}) \emat.
\]

Therefore,
\[
(A_j+B_j)^{-1} =  \frac{1}{D_1}M_-^{-1} \bmat -\ao & \bt - \frac{h_j}{2k(k+(-1)^k)}	\\ \bo h_j - \frac{k(k - (-1)^k)}{2} & \ao h_j \emat
\]
where $D_1 = \frac{(-1)^kh_j}{2k(k + (-1)^k)} ( (-1)^k \g_j + \la_j)$ is bounded by definitions of  $\g_j, \la_j$ and mesh regularity condition.
Then
\[
\begin{split}
(A_j+B_j)^{-1} G \bmat 1 & 0 \\ 0 & \frac{1}{h_j} \emat & =  \frac{1}{D_1}M_-^{-1} \bmat -\ao & \tilde \bt h h_j^{-1} - \frac{1}{2k(k + (-1)^k)}	\\ \tilde \bo h^{-1} h_j - \frac{k(k -(-1)^k)}{2} & \ao \emat \bmat \ot + \ao & -\tilde \bt h h_j^{-1} \\ - \tilde \bo h^{-1} h_j & \ot - \ao \emat,	\\
(A_j+B_j)^{-1} H \bmat 1 & 0 \\ 0 & \frac{1}{h_j} \emat & =  \frac{1}{D_1}M_-^{-1} \bmat -\ao & \tilde \bt h h_j^{-1} - \frac{1}{2k(k + (-1)^k)}	\\ \tilde \bo h^{-1} h_j - \frac{k(k - (-1)^k)}{2} & \ao \emat \bmat \ot - \ao & \tilde \bt h h_j^{-1} \\ \tilde \bo h^{-1} h_j & \ot + \ao \emat
\end{split}
\]
and
\[
\mathcal{M}_{j, m} = (A_j+B_j)^{-1} G \bmat 1 & 0 \\ 0 & \frac{1}{h_j} \emat \bmat 1 \\ m(m+1) \emat +(-1)^m (A_j+B_j)^{-1} H \bmat 1 & 0 \\ 0 & \frac{1}{h_j} \emat \bmat 1 \\ -m(m+1) \emat.
\]

By mesh regularity condition, $\exists \sigma_1, \sigma_2, s.t., \sigma_1 h_j \leq h \leq \sigma_2 h_j$ and the proof is complete.

\end{proof}

\subsection{Proof of Lemma \ref{lem:pstest}}
\label{apdx:pstproof}
By Definition \ref{def:pst},
$\pst u  |_{I_j}= \sum_{m=0}^{k-2} u_{j,m} L_{j,m} + \acute{u}_{j,k-1}L_{j,k-1} + \acute{u}_{j,k}L_{j,k}.$ We solve the two coefficients $\acute{u}_{j,k-1}, \acute{u}_{j,k}$ on every cell $I_j$ according to definition \eqref{eqn:psteq}.

If assumption A1 is satisfied, it has been shown in Lemma 3.1 in \cite{2018arXiv180105875C} that \eqref{eqn:psteq} is equivalent to \eqref{eqn:psteq2}. Substitute $u$ and $u_x$ by \eqref{eqn:lt}, we obtain the following equation
\beq
\label{eqn:pdagflux2}
(A_j+B_j)
\begin{bmatrix}
\acute{u}_{j,k-1}	\\
\acute{u}_{j,k}
\end{bmatrix}
 = (A_j+B_j)
\begin{bmatrix}
{u}_{j,k-1}	\\
{u}_{j,k}
\end{bmatrix}
+
\sum_{m=k+1}^\infty u_{j,m} (G L_{j,m}^- + H L_{j,m}^+),
\eeq
the existence and uniqueness of the system above is ensured by assumption A1, that is, $\det (A_j+B_j) = 2(-1)^k \g_j \neq 0$. Thus, \eqref{eqn:pstcoefa1} is proven.

If any of the assumptions A2/A3 is satisfied, we obtain
\[
A
\bmat
\acute{u}_{j,k-1}	\\
\acute{u}_{j,k}
\emat
+ B
\bmat
\acute{u}_{j+1,k-1}	\\
\acute{u}_{j+1,k}
\emat
 = 
\sum_{m=k-1}^\infty u_{j,m}
G L_{m}^-
+ u_{j+1,m}
H L_{m}^+,
\]
which can be solved by a global linear system with coefficient matrix $M.$ The solution is
\[
\hspace{-0.8in}
\begin{split}
\bmat
\acute{u}_{j,k-1}	\\
\acute{u}_{j,k}
\emat
& = \suml r_l A^{-1} \Big ( A
\bmat
{u}_{j+l,k-1}	\\
{u}_{j+l,k}
\emat
+ B
\bmat
{u}_{j+l+1,k-1}	\\
{u}_{j+l+1,k}
\emat
+
\sum_{m=k+1}^\infty u_{j+l,m}
G L_{m}^-
+ u_{j+l+1,m}
H L_{m}^+
\Big ),		\\
& = \suml r_l \Big (\bmat
{u}_{j+l,k-1}	\\
{u}_{j+l,k}
\emat
- Q\bmat
{u}_{j+l+1,k-1}	\\
{u}_{j+l+1,k}
\emat
+ \sum_{m=k+1}^\infty u_{j+l,m}
[ L_{k-1}^- , L_k^- ]^{-1} L_{m}^-
- u_{j+l+1,m}
Q [ L_{k-1}^+ , L_k^+ ]^{-1} L_{m}^+ \Big )	\\
& = \bmat
{u}_{j,k-1}	\\
{u}_{j,k}
\emat
+ \sum_{m=k+1}^\infty \Big (\sum_{l=1}^{N-1} u_{j+l, m} V_{2,m} + u_{j, m} r_0 [ L_{k-1}^- , L_k^- ]^{-1} L_{m}^- - u_{j+N, m} r_N [ L_{k-1}^- , L_k^- ]^{-1} L_{m}^- \Big )	\\
& = 
\bmat
{u}_{j,k-1}	\\
{u}_{j,k}
\emat
+\sum_{m=k+1}^\infty \big (u_{j, m} V_{1,m} + \suml  u_{j+l,m} r_l V_{2,m}  \big ),
\end{split}
\]
where $r_N = Q^N (I_2 - Q^N)^{-1} =  r_0 - I_2 $ is used in the third equality. 
Therefore, \eqref{eqn:pstcoefa23} is proven. The proof of \eqref{eqn:pstest} is given in Lemmas 3.2, 3.4, 3.8, 3.9 in \cite{2018arXiv180105875C}. 

If $h_j = h_{j+1}$, denote 
\[
	\mathcal{U}_j = 
		\bmat
		\acute u_{j,k-1}	- u_{j,k-1}\\
		\acute u_{j,k}-u_{j,k}
		\emat
		-
		\bmat
		\acute u_{j+1,k-1} -u_{j+1,k-1}	\\
		\acute u_{j+1,k} -u_{j+1,k}
		\emat	
\]
then by \eqref{eqn:pstcoefa1} and \eqref{eqn:pstcoefa23}, we have
\[
\begin{split}
\mathcal{U}_j
& =
\sum_{m=k+1}^\infty (u_{j,m} - u_{j+1,m}) \mathcal{M}_{m}, \quad \text{if A1},	\\
\mathcal{U}_j
&  =
\sum_{m=k+1}^\infty \big ( (u_{j, m} - u_{j, m}) V_{1,m} + \suml  (u_{j+l,m} - u_{j+l+1,m}) r_l V_{2,m}  \big ), \  \text{if A2/A3}.
\end{split}
\]

When assumption A1 is satisfied \eqref{eqn:pstcoefdiff} is 
a direct result of \eqref{eqn:ujmdiff}.

When assumption A2 is satisfied, we have
\[
	\|\mathcal{U}_j\|_\infty \leq C (1+\suml \|r_l\|_{\infty} ) \max_{j+l\in \zn} h^{k+2} | u^{(k+2)}(x_{j+l-\ot}) | \leq Ch^{k+2} | u |_{W^{k+2,\infty}(I)},
\]
where \eqref{eqn:minv1}, \eqref{eqn:ujmdiff} and the fact that $V_{1,m}, V_{2,m}, \forall m \geq 0$ are constant matrices independent of $h$ are used in above inequalities.

When assumption A3 is satisfied, we perform more detailed computation of $\mathcal{S}_j$ and 
use Fourier analysis to bound it by utilizing the smoothness
and periodicity. If $u \in W^{k+2+n, \infty}(I)$, 
\[
	\begin{split}
	\mathcal{U}_j & = \suml r_l \sum_{m=k+1}^\infty  (u_{l+j,m} - u_{l+j+1,m}+ \sum_{s=1}^n \mu_s h_j^{k+1+s} u^{(k+1+s)}(x_{j+l-\ot}))V_{2,m}	\\
		& - \suml r_l \sum_{m=k+1}^\infty \sum_{s=1}^n \mu_s h_j^{k+1+s} u^{(k+1+s)}(x_{j+l-\ot}) V_{2,m} + \sum_{m=k+1}^\infty (u_{j, m} - u_{j, m}) V_{1,m}.
	\end{split}
\]

When $\frac{|\g|}{|\la|} < 1$, $Q=-A^{-1}B$ has two imaginary eigenvalues 
$\lo, \lt$ with $|\lo | = |\lt | =1$.
By (59) of \cite{2018arXiv180105875C}, we have $r_l= \frac{\lo^l}{1-\lo^N} Q_1 + \frac{\lt^l}{1-\lt^N} (I_2 - Q_1)$, where $Q_1$ is a constant matrix independent of $h$, and defined in (60) and (61) in \cite{2018arXiv180105875C}.

Thus, by \eqref{eqn:ujmdiff} and \eqref{eqn:ujmdiffest}
\[
	\begin{split}
	\| \mathcal{U}_j \|_\infty & \leq C \suml \|r_l\|_{\infty} h^{k+2+n} |u|_{W^{k+2+n,\infty}(I)}  + C h^{k+2}|u|_{W^{k+2,\infty}(I)}\\
		& + \| \sum_{m=k+1}^\infty  \sum_{s = 1}^n \mu_s h^{k+1+s} (Q_1 \sumla_{\lo} + (I_2-Q_1) \sumla_{\lt})u^{(k+1+s)}(x_{j-\ot}) V_{2,m} \|_\infty.
	\end{split}
\]

By \eqref{eqn:sumlaest}, we have
\[
	\| \mathcal{U}_j \|_\infty  \leq C_1 h^{k+2}.
\]

When $\frac{|\g|}{|\la|} =1$, $Q=-A^{-1}B$ has two repeated eigenvalues. By (71) of \cite{2018arXiv180105875C}, 
we have $r_l = \frac{(-1)^l}{2} I_2 + (-1)^l \frac{-N+2l}{4\g} Q_2$, where $Q_2/ \g$ is a constant matrix, 
we estimate $\mathcal{U}_j$ by the same procedure as previous case and obtain
\[
	\begin{split}
	\| \mathcal{U}_j \|_\infty & \leq C \suml \|r_l\|_{\infty} h^{k+2+n} |u|_{W^{k+4,\infty}(I)} + C h^{k+2}|u|_{W^{k+2,\infty}(I)}\\
		& + \| \sum_{m=k+1}^\infty  \sum_{s = 1}^n \mu_s h^{k+1+s} \ot (\sumla_{-1} + \frac{Q_2}{\g} \sumnj) u^{(k+1+n)}(x_{j-\ot})
				V_{2,m} \|_\infty \leq C_1 h^{k+2}.
	\end{split}
\]
Finally, the estimates for $\mathcal{U}_j$ is complete for all assumptions and \eqref{eqn:pstcoefdiff} is proven.

\subsection{Proof of Lemma \ref{lem:pdagprop}}
\label{apdx:pdagproof}
\begin{proof}
By the definition of $\pdag$, the solution of $\grave u_{j, k-1}, \grave u_{j, k}$ has similar linear algebraic system as \eqref{eqn:pstcoefa1}. That is, under assumption A2 or A3, the existence and uniqueness condition is $\det (A+B) = 2( (-1)^k \g + \la) \neq 0$. Thus,
\beq
\label{eqn:graveu}
\begin{bmatrix}
\grave{u}_{j,k-1}	\\
\grave{u}_{j,k}
\end{bmatrix}
=
\begin{bmatrix}
u_{j,k-1}	\\
u_{j,k}
\end{bmatrix}
+
\sum_{m=k+1}^\infty u_{j,m} \mathcal{M}_{m}.
\eeq
And then, by \eqref{eqn:mjm} and \eqref{eqn:ltcoef}, \eqref{eqn:pdagest} is proven.



If any of the assumptions A2/A3 is satisfied, then the difference can be written as
\[
Wu|_{I_j} = \pst u|_{I_j} - \pdag u|_{I_j}  = (\acute u_{j,k-1} - \grave u_{j,k-1} )L_{j,k-1} + (\acute u_{j,k} - \grave u_{j,k} ) L_{j,k}.
\]

The properties of $\pst u$ and $\pdag u$ yield the following coupled system
\[
A \bmat \acute u_{j,k-1} - \grave u_{j,k-1} \\ \acute u_{j,k} - \grave u_{j,k} \emat + B \bmat \acute u_{j+1,k-1} - \grave u_{j+1,k-1} \\ \acute u_{j+1,k} - \grave u_{j+1,k} \emat = \bmat {\tau}_{j}	\\ {\iota}_{j}	\emat, \quad \forall j \in \zn,
\]
$$
\label{eqn:wurhs}
\begin{bmatrix}
\tau_j\\
 \iota_j
\end{bmatrix}
 =
\bmat
u	\\
u_x
\emat \bigg \rvert_{x_{j + \ot}}
-
G
\bmat
\pdag u 	\\
(\pdag u)_x
\emat
\bigg \rvert_{x_{j+\ot}}^-
-
H
\bmat
\pdag u 	\\
(\pdag u)_x
\emat
\bigg \rvert_{x_{j+\ot}}^+
= 
G
\bmat
(u -\pdag u)\rvert_{x_{j+\ot}}^- - (u -\pdag u)\rvert_{x_{j+\frac{3}{2}}}^-	\\
(u -\pdag u)_x\rvert_{x_{j+\ot}}^- - (u -\pdag u)_x\rvert_{x_{j+\frac{3}{2}}}^-
\emat,
$$  
where the second equality was obtained by the definition of $\pdag u$ \eqref{eqn:pdagflux}.

Gather the relations above for all $j$   results in a large $2N \times 2N$ linear system with block circulant matrix $M,$ defined in \eqref{eqn:mdef}, as coefficient matrix, then the solution is
$$
\bmat \acute u_{j,k-1} - \grave u_{j,k-1} \\ \acute u_{j,k} - \grave u_{j,k} \emat  = \sum_{l=0}^{N-1} r_l A^{-1} \bmat {\tau}_{l+j}	\\ {\iota}_{l+j}	\emat ,  \quad j \in \zn,
$$
where by periodicity, when $l+j> N$, $\tau_{l+j} = \tau_{l+j-N}, \iota_{l+j} = \iota_{l+j-N}$.

On uniform mesh, by the definition of $R_{j,m}$ in \eqref{eqn:rjm}, $R_{j,m}(1)$ and $(R_{j,m})_x(1)$ are independent of $j$, we denote the corresponding values as $R_m(1)$ and $(R_m)_x(1)$ and let $R^-_{m} = [R_m(1), (R_m)_x(1)]^T$. By \eqref{eqn:updagdiff}, we have
$$
\bmat
(u -\pdag u)\rvert_{x_{j+\ot}}^- - (u -\pdag u)\rvert_{x_{j+\frac{3}{2}}}^-	\\
(u -\pdag u)_x\rvert_{x_{j+\ot}}^- - (u -\pdag u)_x\rvert_{x_{j+\frac{3}{2}}}^-
\emat
=
\sum_{m=k+1}^\infty (u_{j,m} - u_{j+1,m})
R^-_m
$$
and 
\beq
\label{eqn:a2}
\bmat \acute u_{j,k-1} - \grave u_{j,k-1} \\ \acute u_{j,k} - \grave u_{j,k} \emat = \sum_{l=0}^{N-1} r_l \big ( \sum_{m=k+1}^\infty (u_{l+j,m} - u_{l+j+1,m})
 A^{-1} 
G R^-_m
\big ) \doteq \mathcal{S}_j, \quad j \in \mathbb{Z}_N.
\eeq

We can estimate $\mathcal{S}_j$ by the same lines as the estimation of $\mathcal{U}_j$ in Appendix \ref{apdx:pstproof} and \eqref{eqn:projdiff} is proven.

\end{proof}

\subsection{Proof of Lemma \ref{lem:zetaxx}}
\label{apdx:zetaxxproof}
\begin{proof}
By error equation,  the symmetry of $A(\cdot, \cdot)$ and the definition of $s_h,$ we have
\beq
\label{eqa1}
0=a(e, v_h) =  a(\epsilon_h, v_h) + a(\zeta_h, v_h)=\int_I s_h v_h dx +\int_I (\zeta_h)_t v_h dx-iA(v_h,\zeta_h), \quad \forall v_h \in V_h^k.
\eeq

Now, we are going to choose three special test functions to extract superconvergence properties \eqref{eqn:zetaxx}-\eqref{eqn:zetaxjump} about $\zeta_h.$ We first prove \eqref{eqn:zetaxx}. 
Due to the invertibility of the coefficient matrix $M,$ there exists a nontrivial function $v_1 \in V_h^k$, such that $\forall j \in \zn, v_1|_{I_j} = \alpha_{j,k-1}L_{j,k-1} + \alpha_{j,k} L_{j,k} + \overline{(\zeta_h)_{xx}}$, $\intj v_1 (\zeta_h)_{xx} dx = \|(\zeta_h)_{xx}\|_{L^2(I_j)}^2$, $\hat v_1 |_{\jp} = 0$ and $\widetilde{(v_1)_x}|_{\jp} = 0$.
Thus $A(v_h, \zeta_h) = \|(\zeta_h)_{xx}\|^2$. Let $v_h=v_1,$ then  \eqref{eqa1} becomes $$0=\int_I s_h v_1 dx +\int_I (\zeta_h)_t v_1 dx-i \|(\zeta_h)_{xx}\|^2.$$
Hence $\|(\zeta_h)_{xx}\|^2  \leq  \|s_h + (\zeta_h)_t\| \cdot \|v_1\|.$ In order to show the estimates for $\|(\zeta_h)_{xx}\|,$ it remains to estimate $\|v_1\|.$


When the assumption A1 holds, the definition of $v_1$ yields the following local system for each pair of $\alpha_{j,k-1}$ and $\alpha_{j,k}$,
$$
(A_j + B_j) \bmat \alpha_{j,k-1}  \\ \alpha_{j,k} \emat = -G \bmat \overline {(\zeta_h)_{xx}^-} \\ \overline{(\zeta_h)_{xxx}^-} \emat \bigg |_{\jp}  - H \bmat \overline {(\zeta_h)_{xx}^+} \\ \overline{(\zeta_h)_{xxx}^+} \emat \bigg |_{\jm}, \quad \forall j \in \zn.
$$
By simple algebra
\beq
\label{eqn:alphas1} 
\bmat \alpha_{j,k-1}  \\ \alpha_{j,k} \emat  = -(A_j + B_j)^{-1}G \bmat 1 & 0 \\ 0 & \frac{1}{h_j} \emat \bmat \overline {(\zeta_h)_{xx}^-} \\ h_j  \overline{(\zeta_h)_{xxx}^-} \emat \bigg |_{\jp} - (A_j + B_j)^{-1}H \bmat 1 & 0 \\ 0 & \frac{1}{h_j} \emat \bmat \overline {(\zeta_h)_{xx}^+} \\ h_j \overline{(\zeta_h)_{xxx}^+} \emat \bigg |_{\jm},
\eeq
By orthogonality of Legendre polynomials, it follows that 
\[
	\begin{split}
\|v_1\|^2_{L^2(I_j)} & = |\alpha_{j,k-1}|^2 \intj L_{j,k-1}^2 dx + |\alpha_{j,k}|^2 \intj L_{j,k}^2 dx + \|(\zeta_h)_{xx}\|_{L^2(I_j)}^2\\
					 & \leq C(  h_j  \| (\zeta_h)_{xx} \|_{L^2(\partial I_j)}^2 + h_j^3  \| (\zeta_h)_{xxx} \|_{L^2(\partial I_j)}^2 + \| (\zeta_h)_{xx} \|_{L^2(I_j)}^2) \leq C \| (\zeta_h)_{xx} \|_{L^2(I_j)}^2,
	\end{split}
\]
where Lemma \ref{lem:mjm}, trace inequalities and inverse inequalities are used in above inequality. Therefore, \eqref{eqn:zetaxx} is proven when assumption A1 is satisfied.

Similarly, we define $v_2 \in V_h^k$, such that $\forall j \in \zn, v_2|_{I_j} = \alpha_{j,k-1}L_{j,k-1} + \alpha_{j,k} L_{j,k}$, $\intj v_2 (\zeta_h)_{xx} dx = 0$, $\hat v_2 |_{\jp} = 0$ and $\widetilde{(v_2)_x}|_{\jp} = \overline{[\zeta_h]}|_{\jp}$. Thus $A(v_h, \zeta_h) = -\sumj |[\zeta_h]|_{j+\ot}^2$. When assumption A1 is satisfied, this definition yields the following local system for each pair of $\alpha_{j,k-1}$ and $\alpha_{j,k}$,
$$
(A_j + B_j) \bmat \alpha_{j,k-1}  \\ \alpha_{j,k} \emat = G \bmat 0 \\ \overline{[\zeta_h]}  \emat \bigg |_{\jp}  + H \bmat 0 \\ \overline{[\zeta_h]}  \emat \bigg |_{\jm}, \quad \forall j \in \zn.
$$
By same algebra as above, we have
$$\bmat \alpha_{j,k-1}  \\ \alpha_{j,k} \emat  = (A_j + B_j)^{-1}G \bmat 1 & 0 \\ 0 & \frac{1}{h_j} \emat \bmat 0 \\ h_j \overline{[\zeta_h]}  \emat \bigg |_{\jp}  + (A_j + B_j)^{-1} H \bmat 1 & 0 \\ 0 & \frac{1}{h_j} \emat \bmat 0 \\ h_j \overline{[\zeta_h]}  \emat \bigg |_{\jm}. $$
By Lemma \ref{lem:mjm}, it follows directly that 
$$
\|v_2\|^2_{L^2(I_j)} \leq C h^3_j (  |{[\zeta_h]}|^2_{\jp} + |{[\zeta_h]}|^2_{\jm}).
$$
Plug $v_2$ in \eqref{eqa1}, we obtain
$$
\sumj |[\zeta_h]|_{\jp}^2 = i \int_I s_h v_2 dx +i \int_I (\zeta_h)_t v_2 dx \leq \|s_h + (\zeta_h)_t\| \|v_2\|.
$$
Therefore, \eqref{eqn:zetajump} is proven when assumption A1 is satisfied.

Finally, we can also choose  $v_3 \in V_h^k$, such that  $\forall j \in \zn, v_3|_{I_j} = \alpha_{j,k-1}L_{j,k-1} + \alpha_{j,k} L_{j,k}$ such that $\intj v_3 (\zeta_h)_{xx} dx = 0$, $\hat v_3 |_{\jp} = \overline{[(\zeta_h)_x]}|_{\jp}$ and $\widetilde{(v_3)_x}|_{\jp} = 0$. Thus $A(v_h, \zeta_h) = \sumj |[(\zeta_h)_x]|_{j+\ot}^2$. Follow the same lines as the estimates for $v_2$, we end up with the estimates
$$
\|v_3 \|_{L^2(I_j)}^2 \leq C h_j (  |{[(\zeta_h)_x]}|^2_{\jp} + |{[(\zeta_h)_x]}|^2_{\jm}).
$$
Plug $v_3$ in \eqref{eqa1}, we obtain \eqref{eqn:zetaxjump} when assumption A1 is satisfied.

Under assumption A2, we need to compute $\sumj (|\alpha_{j,k-1}|^2 + |\alpha_{j,k}|^2)$ to estimate $\| v_1 \|^2$. 
The definition of $v_1$ yields the following coupled system
\beq
\label{eqn:vest}
A \bmat \alpha_{j,k-1}  \\ \alpha_{j,k} \emat + B \bmat \alpha_{j+1,k-1}  \\ \alpha_{j+1,k} \emat = -G \bmat \overline {(\zeta_h)_{xx}^-} \\ \overline{(\zeta_h)_{xxx}^-} \emat \bigg |_{\jp}  - H \bmat \overline {(\zeta_h)_{xx}^+} \\ \overline{(\zeta_h)_{xxx}^+} \emat \bigg |_{\jp}, \quad j \in \mathbb{Z}_N.
\eeq
Write it in matrix form
\[
M \bs \alpha = \bs b, \quad \bs \alpha = [ \bs \alpha_1, \cdots , \bs \alpha_N]^T, 
\]
where $M$ is defined in \eqref{eqn:mdef} and 
\[
\bs  \alpha_j = [\alpha_{j,k-1}, \alpha_{j, k}], \bs b = [ \bs b_1, \cdots, \bs b_N]^T, \bs b_j =  -G \bmat \overline {(\zeta_h)_{xx}^-} \\ \overline{(\zeta_h)_{xxx}^-} \emat \bigg |_{\jp}  - H \bmat \overline {(\zeta_h)_{xx}^+} \\ \overline{(\zeta_h)_{xxx}^+} \emat \bigg |_{\jp}.
\]

Multiply $A^{-1}$ from the left in \eqref{eqn:vest}, we get an equivalent system
\[
M' \bs \alpha = \bs b',  \quad M' = circ(I_2, A^{-1}B, 0_2, \cdots, 0_2), \bs b' = [ \bs b_1', \cdots, \bs b_N']^T, \bs b_j' = A^{-1} \bs b_j,
\]
and $(M')^{-1} = circ(r_0, \cdots, r_{N-1})$. By Theorem 5.6.4 in \cite{davis2012circulant} and similar to the proof in Lemma 3.1 in \cite{cao2017superconvergenceddg},
\[
M' = (\mathcal F_{N}^* \otimes I_2) \bs \Omega (\mathcal F_{N} \otimes I_2),
\]
where $\mathcal F_{N}$ is the $N \times N$ discrete Fourier transform matrix defined by $(\mathcal F_N)_{ij} = \frac{1}{\sqrt N} \overline \omega^{(i-1)(j-1)}, \omega = e^{i\frac{2\pi}{N}}.$ $\mathcal F_{N}$ is symmetric and unitary and
\[
  \quad \bs \Omega = \text{diag}( I_2 + A^{-1}B, I_2 + \omega A^{-1}B , \cdots, I_2 + \omega^{N-1} A^{-1}B ).
\]
The assumption $\frac{\abs \g}{\abs \la} > 1$ in A2 ensures that the eigenvalues of $Q = -A^{-1}B$ (see (52) in \cite{2018arXiv180105875C}) are not $1$, thus $I_2 + \omega^j A^{-1}B, \forall j,$ is nonsingular and $\bs \Omega$ is invertible. Then
\beq
\label{eqn:rhominv}
| \rho( (M')^{-1}) | = \| (M')^{-1} \|_2 \leq \| \mathcal F_{N}^* \otimes I_2 \|_2 \| \bs \Omega \|_2 \| \mathcal F_{N} \otimes I_2 \|_2 \leq C.
\eeq

Therefore,
\[
\sumj (|\alpha_{j,k-1}|^2 + |\alpha_{j,k}|^2) = \bs \alpha^T \bs \alpha = (\bs b')^T (M')^{-T} (M')^{-1} (\bs b')^T \leq \| (M')^{-1} \|_2^2 \| \bs b'\|_2^2 \leq C \sumj \| \bs b_j'\|_2^2.
\]

Since $A^{-1}G \bmat 1 & 0 \\ 0 & \frac{1}{h} \emat, A^{-1}H  \bmat 1 & 0 \\ 0 & \frac{1}{h} \emat$ are constant matrices, we have
\[
\begin{split}
\| \bs b_j'\|_2^2 & \leq C \left ( \left \| \bmat \overline {(\zeta_h)_{xx}^-} \\ h \overline{(\zeta_h)_{xxx}^-} \emat \bigg |_{\jp} \right \|_2 + \left \| \bmat \overline {(\zeta_h)_{xx}^+} \\ h \overline{(\zeta_h)_{xxx}^+} \emat \bigg |_{\jp} \right \|_2 	\right )			\\
& \leq C(  \| (\zeta_h)_{xx} \|_{L^2(\partial I_j)}^2 + \| (\zeta_h)_{xx} \|_{L^2(\partial I_{j+1})}^2 +  h^2 \| (\zeta_h)_{xxx} \|_{L^2(\partial I_j)}^2 + h^2 \| (\zeta_h)_{xxx} \|_{L^2(\partial I_{j+1})}^2)									\\
&  \leq C(  \| (\zeta_h)_{xx} \|_{L^2(\partial I_j)}^2 + \| (\zeta_h)_{xx} \|_{L^2(\partial I_{j+1})}^2),
\end{split}
\]
where inverse inequality is used to obtain the last inequality. Finally, we obtain the estimate
\[
\begin{split}
\| v_1 \| ^2 & = \sumj |\alpha_{j,k-1}|^2 \| L_{j,k-1} \|_{L^2 (I_j)}^2 + \sumj |\alpha_{j,k}|^2 \| L_{j,k} \|_{L^2 (I_j)}^2 + \| (\zeta_h)_{xx} \|^2						\\
& \leq \| (\zeta_h)_{xx} \|^2  +  Ch \sumj (|\alpha_{j,k-1}|^2 + |\alpha_{j,k}|^2) 			\\
& \leq \| (\zeta_h)_{xx} \|^2  +  Ch \sumj (  \| (\zeta_h)_{xx} \|_{L^2(\partial I_j)}^2 + \| (\zeta_h)_{xx} \|_{L^2(\partial I_{j+1})}^2) 											\\
& \leq \| (\zeta_h)_{xx} \|^2  + C h \| (\zeta_h)_{xx} \|_{ L^2 (\partial \mathcal I_N)}^2 \leq C \| (\zeta_h)_{xx} \|^2,
\end{split}
\]
where inverse inequality is used to obtain the last inequality. Then the estimates for \eqref{eqn:zetaxx} hold true. \eqref{eqn:zetajump} and \eqref{eqn:zetaxjump} can be proven by the same procedure when assumption A2 is satisfied, and the steps are omitted for brevity.
\end{proof}

 \begin{rem}
	When assumption A3 is satisfied, the eigenvalues of $Q$ are two
	complex number with magnitude 1, then a constant bound for
	$\rho((M')^{-1})$ as in \eqref{eqn:rhominv} is not possible. Therefore, we cannot obtain similar results for
assumption A3.  
\end{rem}

\subsection{Proof for Lemma \ref{lem:west}}
\label{apdx:westproof}
\begin{proof}
	Since $w_q \in V_h^k$, we have
	\beq
		\label{eqn:wqgeneral}
		w_q |_{I_j} = \sum_{m=0}^k c_{j,m}^q L_{j,m}.
	\eeq
	Let $v_h = D^{-2} L_{j,m}, m \leq k - 2$ in \eqref{eqn:wq1}, we obtain
	\begin{equation}
	\label{eqn:crel}
	c_{j,m}^q = -i \frac{2m+1}{h_j} \frac{h_j^2}{4} \int_{I_j} \partial_t w_{q-1} D^{-2} L_{j,m} dx.
	\end{equation}
	
	Since $D^{-2} L_{j,m} \in P_c^{m+2}(I_j)$, by the property $u - \pst u \perp V_h^{k-2}$ in the $L^2$ inner product sense, we have
	\beq
	\label{eqn:cjmq}
	c_{j,m}^1 = 
	\begin{cases}
	-i\frac{2m+1}{h_j} \frac{h_j^2}{4}  \int_{I_j} \partial_t (u - \pst u) D^{-2} L_{j,m} dx = 0, & m \leq k - 4,	\\
	-i\frac{2m+1}{h_j} \frac{h_j^2}{4}  \int_{I_j} \partial_t ((u_{j,k-1} - \acute u_{j,k-1}) L_{j,k-1} + (u_{j,k} - \acute u_{j,k}) L_{j,k}) D^{-2} L_{j,m} dx, & m = k-3, k-2.	\\
	\end{cases}
	\eeq
	%
	By induction using \eqref{eqn:wqgeneral}, \eqref{eqn:crel}, \eqref{eqn:cjmq}, for $0 \leq m \leq k - 2 - 2q$, $c_{j,m}^q = 0.$
	
	Furthermore, the first nonzero coefficient can be written in a simpler form related to 
	$u_{j,k-1}$ by induction.
	
	When $q=1$, we compute $c_{j, k-3}^1$ by \eqref{eqn:cjmq} and the definition of $w_0$. That is
	\[
	\begin{split}
	c_{j, k-3}^1 	& = -i \frac{2(k-3)+1}{h_j} \Big ( \frac{h_j}{2} \Big )^2 \partial_t (u_{j,k-1} - \acute u_{j,k-1}) \intj D^{-2} L_{j,k-3} L_{j, k-1} dx 	\\
				& = C h_j^2 \partial_t (u_{j,k-1} - \acute u_{j,k-1}).
	\end{split}
	\]
	
	Suppose $c_{k+1-2q}^{q-1} = Ch_j^{2q-2}\partial_t^{q-1} (u_{j,k-1} - \acute u_{j,k-1})$, then 
	\[
	\begin{split}
	c_{j, k-1-2q}^q 	& = -i \frac{2(k-1-2q)+1}{h_j} \Big ( \frac{h_j}{2} \Big )^2 \partial_t c_{j,k+1-2q}^{q-1} \intj D^{-2} L_{j,k-1-2q} L_{j, k+1-2q} dx 	\\
			& = C {h_j}^{2q} \partial_t^{q} (u_{j,k-1} - \acute u_{j,k-1}).
	\end{split}
	\]
	The induction is completed and \eqref{eqn:wqexp} is proven when $r=0$.

	Next, we begin estimating the coefficient $c_{j, m}^q.$ By Holder's inequality and \eqref{eqn:crel}, we have the estimates for $c_{j, m}^q, k-1-2q \leq m \leq k-2$,
	\[
	\abs{c_{j,m}^q} \leq C h^{2- \frac{1}{s}} \| \partial_t w_{q-1} \|_{L^s(I_j)}.
	\]
	
	To estimate the coefficients $c_{j, k-1}^q, c_{j, k}^q$, we need to discuss it by cases. If assumption A1 is satisfied, meaning \eqref{eqn:wq2} and \eqref{eqn:wq3} can be decoupled and therefore $w_q$ is locally defined by \eqref{eqn:wqdef}.
	By \eqref{eqn:wqvec} and following the same algebra of solving the $k$-th and $(k+1)$-th coefficients in \eqref{eqn:pdagflux2},
	\[
	\bmat
	 c_{j,k-1}^q	\\
	 c_{j,k}^q
	\emat
	 = -\sum_{m=0}^{k-2} \mathcal{M}_{j, m}  c_{j,m}^q.
	\]
	
	By \eqref{eqn:mjm}, for all $j \in \zn$,
	\[
	\begin{split}
	\abs{c_{j,k-1}^q}^2 + \abs{c_{j,k}^q}^2 & \leq C \sum_{m=k-2q-3}^{k-2} \abs{c_{j,m}^q}^2 \leq C h^3 \| \partial_t w_{q-1} \|^2_{L^2(I_j)}.	\\
	\max (\abs{c_{j,k-1}^q} , \abs{c_{j,k}^q}) & \leq C \max_{k-2q-3 \leq m \leq k-2} \abs{c_{j,m}^q} \leq C h^2 \| \partial_t w_{q-1} \|_{L^\infty(I_j)}.
	\end{split}
	\]
	
	If one of assumption A2/A3 is satisfied, \eqref{eqn:wqvec} defines a coupled system. From the same lines for obtaining \eqref{eqn:pstcoefa23} in Appendix \ref{apdx:pstproof}, the solution for $ c_{j,k-1}^q,  c_{j,k}^q$ is
	\beq
	\label{eqn:cjk-1k}
	\begin{split}
	\bmat
	 c_{j,k-1}^q	\\
	 c_{j,k}^q
	\emat
	& = -\sum_{m=k-1-2q}^{k-2} \sum_{l=0}^{N-1} r_l A^{-1} (G  L_{m}^- c_{j+l,m}^q + H L_{m}^+   c_{j+l+1,m}^q )		\\
	& = - \sum_{m=k-1-2q}^{k-2}\Big ( c_{j,m}^q V_{1,m} + \suml c_{j+l,m}^q r_l V_{2,m}\Big ).
	\end{split}
	\eeq
	
	Under assumption A2, we can estimate $c_{j,m}^q, m = k - 1, k,$ using \eqref{eqn:minv1}, that is
	\[
	\left \| \bmat
	 c_{j,k-1}^q	\\
	 c_{j,k}^q
	\emat \right \|_\infty
	 \leq C (1+ \suml \|r_l\|_\infty ) \max |c_{j+l, m}^q| 	\leq C h^2 \| \partial_t w_{q-1} \|_{L^\infty(\mathcal I_N)}.
	\]
	
	Under assumption A3, $\suml \|r_l\|_\infty$ is unbounded. Thus we use Fourier analysis to bound the coefficients utilizing the smoothness and periodicity by similar idea in \cite{2018arXiv180105875C}.
	In the rest of the proof, we make use of two operators $\sumla$ and $\sumnj$, which are defined in \eqref{eqn:sumladef1} and \eqref{eqn:sumladef2}.
	
	When $\frac{|\g|}{|\la|} < 1$, $Q=-A^{-1}B$ has two imaginary eigenvalues $\lo, \lt$ with $|\lo | = |\lt | =1$.
	By (59) of \cite{2018arXiv180105875C}, we have $r_l= \frac{\lo^l}{1-\lo^N} Q_1 + \frac{\lt^l}{1-\lt^N} (I_2 - Q_1)$, where $Q_1$ is a constant matrix independent of $h$, and defined in (60) and (61) in \cite{2018arXiv180105875C}. We perform more detailed computation of the coefficients. 
	In \eqref{eqn:cjmq}, plug in \eqref{eqn:pstcoefa23}, for $m = k-3, k-2$, when $u_t \in W^{k+2+n,\infty}(I)$,
	\[
	\begin{split}
	c_{j,m}^1 	& = i \frac{2m+1}{h} \frac{h^2}{4}  \intj [L_{j,k-1}, L_{j,k}] \partial_t \sum_{p=k+1}^\infty  \Big (  u_{j,p} V_{1,p} +  \suml  u_{j+l,p} r_l V_{2,p} \Big ) D^{-2} L_{j,m} dx			\\
			& = i \frac{2m+1}{2} \frac{h^2}{4} \sum_{p = k + 1}^\infty \partial_t \Big (u_{j,p} F_{p,m}^1 + \suml u_{j+l,p}r_l F_{p,m}^2 \Big )	\\
			& = i \frac{2m+1}{8}h^2 \sum_{p=k+1}^\infty \Big ( \sum_{s=0}^n \mu_s h^{k+1+s}u_t^{(k+1+s)}(x_{\jm}) F_{p,m}^1 				\\
			& + \suml \big (\frac{\lo^l}{1-\lo^N} Q_1 + \frac{\lt^l}{1-\lt^N} (I_2 - Q_1) \big ) \sum_{s=0}^n \mu_s h^{k+1+s}u_t^{(k+1+s)}(x_{j+l-\ot}) F_{p,m}^2  + O(h^{k+n+1}|u_t|_{W^{k+2+n,\infty}(I)}) \Big )		\\
			& = i \frac{2m+1}{8}h^2 \sum_{p=k+1}^\infty  \sum_{s=0}^n \mu_s h^{k+1+s} \big (u_t^{(k+1+s)}(x_{\jm}) F_{p,m}^1 	\\
			& + (Q_1 \sumla_{\lo} + (I_2-Q_1) \sumla_{\lt}) u_t^{(k+1+s)} (x_{\jm})F^2_{p,m} \big) + O(h^{k+3+n}|u_t|_{W^{k+2+n,\infty}(I)}),
	\end{split}
	\]
	where $F_{p,m}^\nu = \frac{2}{h} \intj [L_{j,k-1}, L_{j,k}] V_{\nu,p} D^{-2} L_{j,m} dx, \nu = 1, 2,$ are constants independent of $h$ and \eqref{eqn:ujmtaylor} is used in the third equality.
	
	Plug the formula above into \eqref{eqn:cjk-1k}, by similar computation, we have
	\[
	\begin{split}
	\bmat
	 c_{j,k-1}^1	\\
	 c_{j,k}^1
	\emat
	& = -i\frac{2m+1}{8}h^2 \sum_{m=k-3}^{k-2} \sum_{p=k+1}^\infty  \sum_{s=0}^n \mu_s h^{k+1+s} \big (u_t^{(k+1+s)} (x_{\jm}) F_{p,m}^1 V_{1,m} \\
	& + (Q_1 \sumla_{\lo} + (I_2-Q_1) \sumla_{\lt}) u_t^{(k+1+s)}(x_{\jm}) (F_{p,m}^2 V_{1,m} + F_{p,m}^1 V_{2, m}) \\
			& + (Q_1 \sumla_{\lo} + (I_2-Q_1) \sumla_{\lt})^2 u_t^{(k+1+s)} (x_{\jm})F^2_{p,m} V_{2,m} \big) + O(h^{k+2+n}|u_t|_{W^{k+2+n,\infty}(I)}).
	\end{split}
	\]
	By \eqref{eqn:sumlaest}, we have
	\[
	(Q_1 \sumla_{\lo} + (I_2-Q_1) \sumla_{\lt})^\nu u_t^{(k+1+s)} (x_{\jm}) \leq C |u_t|_{W^{k+2+s+\nu,1}(I)} \leq C |u|_{W^{k+4+s+\nu,1}(I)}.
	\]
	Therefore,
	\[
	|c_{j, m}^1 | \leq C_2 h^{k+3}, \quad m = k-3,k-2, \ \text{ and } |c_{j, m}^1 | \leq C_3 h^{k+3}, \quad m = k-1,k.
	\]
	
	By induction and similar computation, we can obtain the formula for $c_{j, m}^q$. For brevity, we omit the computation and directly show the estimates
	\[
	|c_{j, m}^q | \leq C_{3q} h^{k+1+2q}, \quad k-1-2q \leq m \leq k.
	\]
	
	
	When $\frac{|\g|}{|\la|} =1$, $Q=-A^{-1}B$ has two repeated eigenvalues. By (71) of \cite{2018arXiv180105875C},
	 we have $r_l = \frac{(-1)^l}{2} I_2 + (-1)^l \frac{-N+2l}{4\g} Q_2$, where $Q_2/ \g$ is a constant matrix, 
	 then by \eqref{eqn:pstcoefa23} and \eqref{eqn:ujmtaylor}.
	 For $m = k-3, k-2$, when $u_t \in W^{k+2+n,\infty}(I)$, we compute $c_{j, m}^1$ by the same procedure as previous case and obtain
	\[
	\begin{split}
	c_{j, m}^1 & = i \frac{2m+1}{8}h^2 \sum_{p=k+1}^\infty  \sum_{s=0}^n \mu_s h^{k+1+s} \big (u_t^{(k+1+s)}(x_{\jm}) F_{p,m}^1 \\
			& + \ot (\sumla_{-1} + \frac{Q_2}{\g} \sumnj ) u_t^{(k+1+s)} (x_{\jm}) F^2_{p,m} \big) + O(h^{k+3+n}|u_t|_{W^{k+2+n,\infty}(I)}).
	\end{split}
	\]
	
	Plug formula above into \eqref{eqn:cjk-1k}, we have
	\[
	\begin{split}
	\bmat
	 c_{j,k-1}^1	\\
	 c_{j,k}^1
	\emat
	& = -i \frac{2m+1}{8}h^2 \sum_{m=k-3}^{k-2}\sum_{p=k+1}^\infty  \sum_{s=0}^n \mu_s h^{k+1+s} \big ( u_t^{(k+1+s)}(x_{\jm}) F_{p,m}^1 V_{1,m} \\
			& + \ot (\sumla_{-1} + \frac{Q_2}{\g} \sumnj ) u_t^{(k+1+s)}(x_{\jm}) (F_{p,m}^2 V_{1,m} + F_{p,m}^1 V_{2, m}) \\
			& + \frac{1}{4}(\sumla_{-1} + \frac{Q_2}{\g} \sumnj )^2 u_t^{(k+1+s)} (x_{\jm})F^2_{p,m} V_{2,m} \big) + O(h^{k+2+n}|u_t|_{W^{k+2+n,\infty}(I)}).
	\end{split}
	\]
	
	By \eqref{eqn:sumlaest}, we have
	\[
	(\sumla_{-1} + \frac{Q_2}{\g} \sumnj )^\nu u_t^{(k+1+s)} (x_{\jm}) \leq C |u_t|_{W^{k+2+s+2\nu,1}(I)} \leq C |u|_{W^{k+4+s+2\nu,1}(I)}
	\]
	and
	\[
	|c_{j, m}^1 | \leq C_2 h^{k+3}, \quad m = k-3,k-2, \ \text{ and } |c_{j, m}^1 | \leq C_4 h^{k+3}, \quad m = k-1,k.
	\]
	
	By induction and similar computation, we can obtain the formula for $c_{j, m}^q$. For brevity, we omit the computation and directly show the estimates
	\[
	|c_{j, m}^q | \leq C_{4q} h^{k+1+2q}, \quad k-1-2q \leq m \leq k.
	\]
	
	
	All the analysis above works when we change definition of $w_q$ to $\partial_t^r w_q$ (and change $(w_{q-1})_t$ to $\partial_t^{r+1} w_q$ accordingly) in \eqref{eqn:wqdef}.
	Summarize the estimates for $c_{j, m}^q$ under all three assumptions, for $1 \leq q \leq \floor{\frac{k-1}{2}}$, we have
	\[
		|\partial_t^r c_{j,m}^q| \leq C_{2r,q} h^{k+1+2q}, \quad \| \partial_t^r w_q \| \leq C ( \sumj \sum_{m= k-2q -1}^k |\partial_t^r c_{j,m}^q|^2 h_j)^{\ot} \leq C_{2r,q}h^{k+1+2q}.
	\]
	Then \eqref{eqn:wqexp}, \eqref{eqn:west2} is proven. And \eqref{eqn:wqerreq} is a direct result of above estimate and \eqref{eqn:aui}.
	\end{proof}

\end{document}  

\end{document}